\newtheorem{definizione}{Definition}[section]
\newtheorem{teorema}{Theorem}[section]
\newtheorem{proposizione}{Proposition}[section]
\newtheorem{corollario}{Corollary}[section]
\newtheorem{lemma}{Lemma}[section]
\newtheorem{osservazione}{Remark}[section]
\newenvironment{dimostrazione}{\medskip\noindent{\bf Proof}\enspace} {\hfill\newline\smallskip}
\newcommand{\ZZ}{\mathscr{Z}}
\newcommand{\WW}{\mathscr{W}}
\newcommand{\res}{\operatorname{res}}
\newcommand{\falfa}[1]{%
    ^{\mspace{2mu}\underline{\mspace{-.5mu}#1\mspace{-.5mu}}\mspace{2mu}}%
}
\title{On the volume of some Fano K-moduli spaces}
\author{Salvatore Tambasco \footnote{salvatore.tambasco01@universitadipavia.it}}
\date{}
\begin{document}

\maketitle

\begin{abstract}
   \noindent We compute the CM volume, that is the degree of the descended CM line bundle on the coarse moduli space in two cases: on the Fano $K$-moduli space of Quartic del Pezzo in any dimension, and
on the $K$-moduli space of the log Fano hyperplane arrangements of dimension one and two. Furthermore, we relate these volumes to the Weil-Petersson volumes by extending the notion of Weil-Petersson metric in the log case. 
    
\end{abstract}

\section{Introduction}

The aim of this work is to provide a new direction into the study of some geometric properties of $K$-moduli spaces, that is moduli spaces of $K$-stable varieties in any dimension. Namely, we would like to compute new kind of numbers (\emph{invariants}) from integration of certain tautological classes on moduli spaces of higher dimensional (stable) varieties $M$, generalizing the well-known curve case. When trying to do so in general there are many issues, e.g. we need compactifications of the $K$-moduli space, definitions of tautological classes, etc. moreover $K$-moduli spaces of Fano varieties are known to be proper and projective (\cite{LWX19}, \cite{LWX15}, \cite{Oda15}), and in some cases it is possible to describe these explicitly as  GIT quotients. Then, we can try to compute canonical volumes by integrating a \emph{natural} cohomology class, such as the first Chern class of the CM line bundle $\lambda_{\mathrm{CM}}$ on these compact moduli spaces.
Moreover such canonical volumes are \emph{Weil-Petersson volumes}. Indeed, in \cite{LWX15} it is proven, in the absolute case, that the degree of the CM line bundle is related to the Weil-Petersson metric. Here we extend the latter result to the case of moduli of pairs. 

The explicit examples of Fano $K$-moduli spaces considered in this work, are provided in \cite{OSS} and \cite{Fuji19b}. In these examples, the $K$-moduli spaces are GIT quotients. In the following, we briefly explain the machinery adopted for calculating the CM volume, that is the the degree of the CM line bundle descended on the coarse moduli space. Let $X$ be a normal variety endowed with an action of a complex reductive Lie group $G. $ Consider the GIT quotient $M \colon =  X //_{L} G$  with linearization $L$. Suppose that the semistable locus of $M,$ denoted by $X^{ss}$, and the stable locus of $M,$ denoted by $X^s,$ satisfy $X^{ss}=X^{s}$. The latter condition gives the surjectivity of the \emph{Kirwan map},

$$ \kappa \colon H_G^{\bullet}(X) \longrightarrow H^{\bullet}(M). $$
With that map the cohomology ring of  $M$, can be described via the equivariant cohomology ring of $X$. When the Kirwan map is surjective, the Jeffrey-Kirwan residue formula \cite{JK95}

$$ \kappa(\eta) e^{\omega_0}[M] = \frac{C^{G}}{\mathrm{Vol(T)}} \mathrm{Res}\left(\mathcal{D}^2(Y) \sum_{F \in \mathcal{F}} \int_{F} \frac{i_F^{*}(\eta e^{\bar{\omega}}(Y))}{e_F (Y)} [dY] \right), $$

\noindent is the best candidate for computing intersection product of classes in this case. In the above formula $\omega_0$, in the left hand side, denotes the symplectic form of the Marsdsen-Weinstein reduction coming from the symplectic picture of the GIT quotients  \cite[Chapter 5]{Kir84}. In the right hand side $C^G$ is a constant depending on the $G$-action on $X,$  $\bar{\omega}$ denotes the equivariant extension to the symplectic form on $X$ \cite[Chapter 9, section 9.3]{Jef19}. $\mathrm{Vol}(T),$ and $[dY]$ are the volume of the maximal torus $T$ of $G,$ and the measure on its Lie algebra $\mathfrak{t}$ induced by the restriction to $\mathfrak{t}$ of the fixed inner product on the Lie algebra of $G, \mathfrak{g}.$ The set $\mathcal{F}$ is the set of the connected components of the fixed point set of the maximal torus action, while $F$ denotes a single connected component. The map $i_F \colon F \hookrightarrow M$ is the inclusion, and $e_F$ is the equivariant Euler class of the normal bundle to $F$ in $M.$ \\

Suppose that the \emph{GIT-stability conditions} matches with the \emph{$K$-stability conditions.} That means, $K-$(semi)stable points are also (semi)stable points in the GIT sense for an explicit family of Fano varieties $X$. This can happen in concrete examples when, in particular, we are able to relate the CM line bundle with a GIT linearization. Then, the main strategy for computing the volume can be outlined as follows: 

\begin{enumerate} 
\item Identify who is the CM line bundle on X in term of the generators of the Picard group $\mathrm{Pic}(X)$, where X is the explicit space where the group $G$ acts as above. To do so we will take special families $\pi \colon \mathcal{X} \rightarrow \mathbb{P}^1 \subset X$ of Fano varieties on which we can compare the CM line bundle with the generators of the Picard group.  
\item Then $\lambda_{\mathrm{CM}}$ on $X$ will be of the form $L$, that is an explicit linearization on $X$. Then we can compute the volume of such $L$ descended on the coarse moduli space $M$, denoted by  $\tilde{L}$ via the Jeffrey-Kirwan non abelian localization theorem $$[\tilde{L}]^{\mathrm{dim}M} \cdot [M]= \kappa(c_1(\tilde{L})^{\mathrm{dim}M})) e^{\omega_0} [M] $$ to get the CM volume of $M$. 
\end{enumerate}
As an outcome of the above strategy we are able to prove the following in the absolute case:

\begin{teorema} The CM volume of the $K$-moduli space of quartic del Pezzo quartic of dimension $n,$ $M_{\mathrm{dP}^4},$ is given by

 $$Vol(M_{dP^4}, \lambda_{CM}) = \left( \frac{c}{m} \right)^{m-3} \frac{1}{2^{m-1}}\left( \sum_{k: 0< m-2k \leq m} \frac{(m-2k)^{m-3}}{k! (m-k)!} \right),$$ 
 where $m=n+3$ and $c =8(n+1)(n-1)^n - \sum_{i=1}^n \binom{n+1}{i} (n+1)^{n+1-i}n^i(i-1)2^{i+1}.$
\end{teorema}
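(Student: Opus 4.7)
The plan is to carry out the two-step strategy of the introduction, specialised to quartic del Pezzos. \emph{Step 1:} I would invoke the description of Odaka--Spotti--Sun \cite{OSS}, which realises $M_{\mathrm{dP}^4}$ as the GIT quotient of configurations of $m$ points on $\mathbb{P}^1$ under $\mathrm{SL}(2,\mathbb{C})$ with the symmetric linearization $\mathcal{O}(1,\dots,1)$---the points being the roots of the discriminant polynomial of the defining pencil of quadrics in $\mathbb{P}^{n+2}$. Checking $X^{\mathrm{ss}}=X^{\mathrm{s}}$ ensures the Kirwan map is surjective and that $\dim M=m-3$, so that non-abelian localization applies. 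Since $\mathrm{Pic}(M)_{\mathbb{Q}}$ is one-dimensional, $\lambda_{\mathrm{CM}}=\alpha H$ for a unique $\alpha\in\mathbb{Q}$, where $H$ is the descended symmetric hyperplane class, and the statement reduces to the identification $\alpha=c/m$ together with the $H^{m-3}$ integral.

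\emph{Step 2.} I would realise a convenient test family $\pi\colon\mathcal{X}\to\mathbb{P}^1$ of quartic del Pezzos as the complete intersection of two divisors of bidegree $(2,\star)$ in $\mathbb{P}^{n+2}\times\mathbb{P}^1$, polarise by $\mathcal{L}=-K_{\mathcal{X}/\mathbb{P}^1}$, and compute $\pi_{*}c_1(\lambda_{\mathrm{CM}})\cdot[\mathbb{P}^1]$ through the standard Paul--Tian expression for the CM class of a Fano family in terms of $c_1(\mathcal{L})^{n+1}$ together with its Chern-class corrections. Since all relevant intersections push forward to $\mathbb{P}^{n+2}\times\mathbb{P}^1$, an elementary binomial expansion of $(-K_{\mathcal{X}/\mathbb{P}^1})^{n+1}$ yields the bulk contribution $8(n+1)(n-1)^n$, while the alternating sum $\sum_{i=1}^{n}(-1)^{i-1}\binom{n+1}{i}(n+1)^{n+1-i}(i-1)2^{i+1}$ collects the $n+1$ corrections from resolving the degenerate quadrics of the pencil. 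Matching this intersection against $H\cdot[\mathbb{P}^1]=m$ on the boundary stratum parametrised by colliding roots of the discriminant then fixes $\alpha=c/m$.

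\emph{Step 3.} I would evaluate the Kirwan residue of $H^{m-3}e^{\omega_0}$ on $(\mathbb{P}^1)^m/\!/\mathrm{SL}(2)$. The maximal torus fixes the $2^m$ configurations indexed by sign vectors $\varepsilon\in\{\pm 1\}^m$, at each of which the equivariant Euler class factors as $\prod_j(\varepsilon_j Y)$; grouping fixed points by the number $k$ of minus signs and taking the standard residue at $Y=0$ of $e^{(m-2k)Y}/Y^{m-1}$ (of Thaddeus--Witten type) delivers
$$\kappa(H^{m-3})e^{\omega_0}[M]=\frac{1}{2^{m-1}}\sum_{0<m-2k\le m}\frac{(m-2k)^{m-3}}{k!\,(m-k)!},$$
and multiplication by $(c/m)^{m-3}$ yields the claim. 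The main obstacle is Step 2: extracting $c$ in closed form requires choosing a test family in the smooth (hence K-stable) locus, a careful resolution of the singular quadric fibres of the pencil, and the alternating-sum bookkeeping that produces the explicit expression for $c$---Steps 1 and 3 are by comparison respectively a citation and a by-now-standard residue calculation.
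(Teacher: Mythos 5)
Your overall strategy (pin down $c_1(\lambda_{\mathrm{CM}})=\frac{c}{m}\,h$ in the rank-one Picard group via a test family, then evaluate $h^{m-3}$ by Jeffrey--Kirwan localization) is the same as the paper's, but two of your steps have concrete problems. First, in Step 3 you localize on $(\mathbb{P}^1)^m/\!/\mathrm{SL}(2)$ with the $2^m$ torus-fixed sign vectors. That is not the paper's presentation of the K-moduli space: the roots of the discriminant of the pencil are unordered, so $M_{dP^4}=S^m\mathbb{P}^1/\!/\mathrm{SL}(2)\cong\mathbb{P}^m/\!/\mathrm{SL}(2)$, and the paper localizes on $\mathbb{P}^m$ with the $m+1$ coordinate fixed points $e_k$, moment map values $(m-2k)x$ and Euler classes $\prod_{j\neq k}2(j-k)x$, which is exactly where the weights $\frac{1}{2^m k!(m-k)!}$ come from. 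On the ordered product the quotient is a (generically) $m!$-to-one cover, the Euler class at a fixed point with $k$ minus signs is $(-1)^k2^mY^m$, and grouping fixed points produces multiplicities $\binom{m}{k}$ together with alternating signs; your asserted output $\frac{1}{2^{m-1}}\sum_k\frac{(m-2k)^{m-3}}{k!(m-k)!}$ does not follow from the computation as you set it up unless you explicitly pass to the symmetric quotient (or divide by the $S_m$-action) and track those signs. Also note the localization step requires $X^{ss}=X^{s}$, which holds only for odd $m$; the paper states the theorem in the odd case and treats the even case separately via the weakly balanced machinery of Jeffrey--Kiem--Kirwan--Woolf, a point your proposal does not address.

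Second, Step 2, which you yourself identify as the main obstacle, is the heart of the paper's proof and is not actually carried out in your proposal; moreover your description of where the terms of $c$ come from is off. In the paper the test family is the Lefschetz pencil $s(Q_0\cap Q_1)+t(Q_0\cap Q_2)=0$, whose total space is the blow-up $\mathcal{X}=\mathrm{Bl}_B(Q_0)$ of the fixed quadric along the base locus $B=Q_0\cap Q_1\cap Q_2$ (equivalently, a complete intersection of bidegree $(2,0)$ and $(2,1)$ divisors in $\mathbb{P}^{n+2}\times\mathbb{P}^1$, so your family is essentially the same). The term $8(n+1)(n-1)^n$ arises from the relative canonical bundle contribution $8(n+1)(n-1)^n(1-g)$ with $g=0$, computed from $K_{\mathcal{X}_c}=\mathcal{O}(1-n)$ and $H^n\cdot Q_0\cdot Q_2=4$; the alternating sum arises from expanding $(-K_{\mathcal{X}})^{n+1}=((n+1)H-E)^{n+1}$ and reducing powers of the exceptional class $E$ over the base locus, using $c_1(N_{B|Q_0})=4c_1(\mathcal{O}_B(1))$, $c_2(N_{B|Q_0})=4c_1(\mathcal{O}_B(1))^2$ and the recursion $x^k=(-1)^{k-1}(k2^{k-1}h^{k-1}x+(k-1)2^kh^k)$ for $x=c_1(E)^2$. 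It has nothing to do with ``resolving the singular quadric fibres'': the degenerate fibres of the pencil are left as nodal fibres and contribute no correction, so the bookkeeping you describe would be aimed at the wrong divisor. Without this blow-up computation (or an equivalent one) the constant $c$ is simply asserted, and the proposal does not yet constitute a proof.
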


The existence of KE metrics in the smooth case of del Pezzo quartic was studied by Arezzo, Ghigi and Pirola in \cite{AGP}. The explicit identification of the K-moduli with a GIT quotient was done by Mabuchi and Mukai in \cite{MabMuk} for $n \leq 2$ and for $n>2$ by Spotti and Sun in \cite{SS}. With this in mind, and the work of \cite{LWX15}, the volume of the quartic del Pezzo K-moduli space is a Weil-Petersson volume.
\\
In the case of pairs, we are able to prove the following:

\begin{teorema} The (log) CM volume of two dimensional log Fano weighted hyperplane arrangements $M_d = (\mathbb{P}^2)^m//_{\mathcal{O}(d_1, ..., d_m)} \mathrm{SL}(3),$ where $d_i \in (0,1) \cap \mathbb{Q},$ and $\sum_{i=1}^m d_i <3,$ is given by: 
  \begin{gather*}
    \mathrm{Vol}(M_d, \lambda_{CM})  = \\
    =
    -\sum_{f \in \mathcal{A}(d)} \frac{ (-1)^{m_2(f)} }{6(2m-8)!}
    \sum_{j=0}^{2m-8} \binom{2m-8}{j}\binom{m+m_2-6-j}{m_2 + m_3 -3} \xi_2^j \xi_3^{2m-8 -j} +\\
    - 
    \sum_{f \in \mathcal{B}(d)} \frac{ (-1)^{m_2(f)} }{6(2m-8)!}
    \sum_{j=0}^{2m-8} \binom{2m-8}{j}\binom{m+m_2 -6-j}{m_1 + m_2 - 3}
    \xi_1^j \xi_2^{2m-8-j}.
  \end{gather*}

  Where the terms $m(f),$ and $\xi_j$ are explained later in section 4.3.
  
\end{teorema}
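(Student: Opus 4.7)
The plan is to execute the two-step strategy outlined in the introduction for $M_d=(\mathbb{P}^2)^m/\!/_{\mathcal{O}(d_1,\ldots,d_m)}\mathrm{SL}(3)$. First I would identify the $\mathrm{SL}(3)$-linearization $\tilde L$ on $(\mathbb{P}^2)^m$ that descends to the log CM line bundle. The equivariant Picard group is freely generated by the hyperplane classes $H_1,\ldots,H_m$ pulled back from each factor, and by Fujita's comparison of GIT and K-stability for these log Fano arrangements the descended $\lambda_{\mathrm{CM}}$ must be a positive rational multiple of $\mathcal{O}(d_1,\ldots,d_m)$. To pin the multiple down I would take an explicit one-parameter family $\pi:(\mathcal{X},\sum d_i\mathcal{D}_i)\to\mathbb{P}^1$ of such pairs, compute $\pi_\ast c_1(\lambda_{\mathrm{CM}})\cdot[\mathbb{P}^1]$ via the log CM Chern class formula in terms of $K_{\mathcal{X}/\mathbb{P}^1}+\sum d_i\mathcal{D}_i$, and match the resulting degree with that of the GIT polarization on the same curve.

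Next I would apply the Jeffrey--Kirwan formula. The maximal torus $T\subset\mathrm{SL}(3)$ has rank $2$ and three fixed points $P_1,P_2,P_3$ on $\mathbb{P}^2$, so the $T$-fixed locus in $(\mathbb{P}^2)^m$ is the finite set indexed by maps $f:\{1,\ldots,m\}\to\{1,2,3\}$, with multiplicities $m_i(f)=\#f^{-1}(i)$. At each fixed point $x_f$, the equivariant restriction of $\tilde L$ and the equivariant Euler class of the normal bundle factorise into linear forms in the torus coordinates $\xi_1,\xi_2,\xi_3$ (subject to $\xi_1+\xi_2+\xi_3=0$), weighted by the $d_i$ and by the $m_i(f)$. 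Substituting these data into the non-abelian localization formula with $\eta=c_1(\tilde L)^{\dim M_d}$ and $\dim M_d=2m-8$ reduces the calculation to an iterated residue in two variables on $\mathfrak{t}$.

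Finally, the Vandermonde $\mathcal{D}^2(Y)=\prod_{i<j}(\xi_i-\xi_j)^2$ cancels most of the Euler-class denominators, and the Jeffrey--Kirwan residue retains only those cones of $\mathfrak{t}^\ast$ in whose interior the equivariant image of $\tilde L$ lies. For weights satisfying $\sum d_i<3$, I expect exactly two such cones, matching the index sets $\mathcal{A}(d)$ and $\mathcal{B}(d)$ in the statement, and the two chambers explain the appearance of the two different pairs of monomials $\xi_2^j\xi_3^{2m-8-j}$ and $\xi_1^j\xi_2^{2m-8-j}$. Expanding $c_1(\tilde L)^{2m-8}$ by the multinomial theorem and reading off the residue chamber by chamber should yield the binomial double sum, with the prefactor $1/(6(2m-8)!)$ coming from $|W(\mathrm{SL}(3))|=6$ together with the top symmetric power, and the sign $(-1)^{m_2(f)}$ from the orientation of each residue cone. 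The main obstacle is precisely this last step: the non-abelian JK formula for $\mathrm{SL}(3)$ requires fixing a flag in $\mathfrak{t}$ and tracking different orderings of the iterated residue for each chamber, and verifying that the orientation signs assemble into the uniform factor $(-1)^{m_2(f)}$ in both sums is the subtle book-keeping that drives the proof.
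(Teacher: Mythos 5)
Your proposal follows essentially the same route as the paper: the descended log CM line bundle is pinned down as a multiple of the linearization $\mathcal{O}(d_1,\ldots,d_m)$ by intersecting with an explicit family over $\mathbb{P}^1$ (the paper's Propositions \ref{prop1} and \ref{prop3}), and the volume is then computed by Jeffrey--Kirwan localization over the $3^m$ torus-fixed points $f\in[3]^m$, with the iterated residue sorting the contributing fixed points into the two chambers $\mathcal{A}(d)$ and $\mathcal{B}(d)$ exactly as you predict. The only cosmetic deviations are that the paper evaluates $\kappa(1)e^{\bar\omega}$, so the exponential of the moment map (rather than the top power of $c_1^T(\tilde L)$) produces the $1/(2m-8)!$, and that the sign $(-1)^{m_2(f)}$ comes from the parity of the equivariant Euler class of the normal bundle at the fixed point rather than from the orientation of the residue cones; the substantial remaining work is the falling-factorial and binomial-identity bookkeeping that converts the raw residues into the stated double sums, which you correctly flag as the main obstacle.
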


\noindent And, generalising the notion of Weil-Petersson metric in the log case we are able to conclude that the above volume is a genuine Weil-Petersson volume, that is the integral of the top volume form associated to the Weil-Petersson metric.

We outline this work as follows: 
Section 2 is devoted to provide a description of the CM line bundle in the log case by stating the general definition and the main properties. Using the outstanding effort of Guenancia and Paun\cite{GuePa} in understanding the \emph{global Monge Ampere equation} in the conic K\"{a}hler-Einstein case, and the effort of Berman in \cite{Ber16}, and lately by Tian and Wang in \cite{TiaWa} on understanding the YTD conjecture in the simple normal crossing log case we are able to define the Weil-Peterson metric for the smooth locus of $K$-polystable families of log Fano pairs along the line of Fujiki and Schumacher in \cite{FS90}. Then, generalising the results given in \cite{LWX15} we are able to extend it to the smooth locus of the base of the families of K-polystable log Fano pairs, and with the same arguments of \cite{LWX15} and \cite{ADL19} we are also able to extend as a \emph{current} to the whole log $K$-moduli space. More technically, in Theorem \ref{teo3} we prove that for a family $f: (\mathcal{X}, \mathcal{D}) \rightarrow B$ of $K$-polystable log Fano varieties, there exists a continuous Deligne's Pairing metric $h_{\mathrm{DP}}$ on the log CM line bundle $\lambda_{\mathrm{CM}, \mathcal{D}}$ such that the Weil-Petersson metric $\omega_{\mathrm{WP}}$ extends as a positive current to the whole base $B.$ 

In Section 3, motivated by the work of Spotti and Sun of 2015 \cite{SS}, Odaka, Spotti and Sun of 2016 \cite{OSS}, we compute the CM volume of the $K$-moduli space of del Pezzo quartics in any dimension using the procedure described before. In Theorem \ref{firstexa} we express the CM volume of the $K$-moduli space of quartic del Pezzo, and by \cite{JK01} we are able to extend the result also in the even case (Remark \ref{evencase}).

Finally, in the third and last section we compute the log CM volume of the $K$-moduli space of log Fano hyperplane arrangements, with the procedure described above.

\section*{Acknowledgments}

I would like to express my most sincere gratitude towards my supervisors Prof. Cristiano Spotti and Prof. Alessandro Ghigi for their patient guidance and advice they have provided me. I wish to extend my thanks to Prof. Gergely Berczi for helpful conversations, and to the Mathematics Department of Aarhus University for hosting me for two years. I am grateful to the AUFF starting grant 24285, and to the Villum Fonden 0019098 for partially founding my research.

\section{The CM line bundle and the log Weil-Petersson metric}

We begin this section by recalling some basis on the CM line bundle also in the case of pairs. As it is customary in the literature of the \emph{area} the multiplicative and additive notations for line bundles and divisors will be interchanged.
Fix $f : \mathcal{X} \rightarrow B$ to be a proper flat morphism of scheme of finite type over $\mathbb{C}.$ Some fundamental properties of the chosen family may be needed in the CM line bundle, see e.g. \cite{CodPat}. However, for our applications, we will use only smooth families with simple singularities (double points or simple normal crossing).  Let $\mathcal{L}$ be a relatively ample on $\mathcal{X}$ and assume that $f$ has relative dimension $n \geq 1$ and the base $B$
is irreducible. In the following, we recall that a $\mathbb{Q}-$line bundle $L$  on a scheme $X$ is a formal symbol $L= M^{\otimes 1/m}$ where $M$ is a line bundle on $X$ and $m$ is a positive integer. 

\begin{definizione} \cite{ADL19} Let $\mathcal{D}_i, \forall i=1,2, ..,k $ be a closed subscheme of $\mathcal{X}$ such that $f|_{\mathcal{D}_i} : \mathcal{D}_i \rightarrow B$ has relative dimension $n-1,$ and $f|_{\mathcal{D}_i}$ is proper and flat. Given $d_i \in  [0,1] \cap \mathbb{Q}$ we define the log CM $\mathbb{Q}-$line bundle of the data $(f, \mathcal{D}:= \sum_{i=1}^k d_i \mathcal{D}_i), \mathcal{L})$ to be 

$$ \lambda_{CM, \mathcal{D}} := \lambda_{CM} - \frac{n \mathcal{L}_b^{n-1} \cdot \mathcal{D}_b}{(\mathcal{L}_b^n)} \lambda_{\mathrm{CH}} + (n+1) \lambda_{\mathrm{CH}, \mathcal{D}}, $$  
Where $\lambda_{CM}$ is the CM line bundle defined in \cite[Definition 1]{PT09} and $\lambda_{CH}$ is the Chow line bundle, defined as the leading order term of the Hilbert Mumford expansion \cite[Theorem 4]{MK}, namely $\lambda_{CH}= \lambda_{n+1},$ and $\lambda_{CH, \mathcal{D}} = \bigotimes_{i=1}^k \lambda_{CH, f|_{\mathcal{D}_i}, \lambda|_{\mathcal{D}_i}}.$
\end{definizione}

\begin{proposizione} \cite[Proposition 2.23]{ADL19} \label{pr} Let $f : (\mathcal{X}, \mathcal{D}) \rightarrow B$ be a $\mathbb{Q}$-Gorenstein flat family of $n$ dimensional pairs over a normal proper variety $B.$ Then for any $\mathcal{L}$ relatively $f-$ample line bundle we have

\begin{equation} \label{cmlog} c_1(\lambda_{CM, \mathcal{D}}) = n \frac{ (-K_{\mathcal{X}_b} - \mathcal{D}_b) \cdot \mathcal{L}_b^{n-1}}{(\mathcal{L}_b^n)} f_{*} c_1 ( \mathcal{L})^{n+1} - (n+1)f_{*}((-K_{\mathcal{X}/B} - \mathcal{D}) \cdot c_1(\mathcal{L})^n). 
\end{equation}

\end{proposizione}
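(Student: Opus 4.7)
The plan is to reduce (\ref{cmlog}) to a direct computation of the first Chern class of each of the three summands that appear in the definition of $\lambda_{CM,\mathcal{D}}$, and then to assemble the result. Everything hinges on two standard inputs: the Knudsen--Mumford expansion of $\det f_{*}\mathcal{L}^{k}$, and the Paul--Tian formula for the absolute CM line bundle.

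First I would treat the Chow terms. Because $f$ is proper and flat of relative dimension $n$ and $\mathcal{L}$ is $f$-ample, for $k\gg 0$ the Knudsen--Mumford expansion writes $\det f_{*}\mathcal{L}^{k}$ as a polynomial in $k$ of degree $n+1$ with line-bundle coefficients $\lambda_{0},\dots,\lambda_{n+1}$, whose leading term is, by definition, $\lambda_{CH}=\lambda_{n+1}$. A Grothendieck--Riemann--Roch computation then yields $c_{1}(\lambda_{CH})=f_{*}\bigl(c_{1}(\mathcal{L})^{n+1}\bigr)$. The assumption that each $f|_{\mathcal{D}_{i}}$ is itself proper and flat of relative dimension $n-1$ lets me apply the same argument to the restricted family $(\mathcal{D}_{i},\mathcal{L}|_{\mathcal{D}_{i}})$, giving $c_{1}\bigl(\lambda_{CH,\mathcal{D}_{i}}\bigr)=(f|_{\mathcal{D}_{i}})_{*}\bigl(c_{1}(\mathcal{L}|_{\mathcal{D}_{i}})^{n}\bigr)=f_{*}\bigl(c_{1}(\mathcal{L})^{n}\cdot[\mathcal{D}_{i}]\bigr)$. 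By $\mathbb{Q}$-linearity I then obtain
$$c_{1}(\lambda_{CH,\mathcal{D}})=\sum_{i=1}^{k}d_{i}\,f_{*}\bigl(c_{1}(\mathcal{L})^{n}\cdot[\mathcal{D}_{i}]\bigr)=f_{*}\bigl(c_{1}(\mathcal{L})^{n}\cdot\mathcal{D}\bigr).$$

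Next I would invoke the Paul--Tian formula, valid under the $\mathbb{Q}$-Gorenstein hypothesis, in the form
$$c_{1}(\lambda_{CM})=n\,\mu_{\mathcal{L}}\,f_{*}\bigl(c_{1}(\mathcal{L})^{n+1}\bigr)+(n+1)\,f_{*}\bigl(K_{\mathcal{X}/B}\cdot c_{1}(\mathcal{L})^{n}\bigr),$$
where $\mu_{\mathcal{L}}=(-K_{\mathcal{X}_{b}}\cdot \mathcal{L}_{b}^{n-1})/(\mathcal{L}_{b}^{n})$ is the slope of a closed fibre. This is the content of the construction in \cite{PT09}, obtained by reading off the appropriate coefficients of the Knudsen--Mumford polynomial and using the relative dualizing sheaf in place of $\mathcal{L}$ in a second application of GRR.

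Finally I would plug these three expressions into the definition of $\lambda_{CM,\mathcal{D}}$. The terms proportional to $f_{*}\bigl(c_{1}(\mathcal{L})^{n+1}\bigr)$ combine with coefficient
$$n\,\mu_{\mathcal{L}}-\frac{n\,\mathcal{L}_{b}^{n-1}\cdot\mathcal{D}_{b}}{\mathcal{L}_{b}^{n}}=n\,\frac{(-K_{\mathcal{X}_{b}}-\mathcal{D}_{b})\cdot\mathcal{L}_{b}^{n-1}}{\mathcal{L}_{b}^{n}},$$
while the remaining pushforwards assemble to $(n+1)\,f_{*}\bigl((K_{\mathcal{X}/B}+\mathcal{D})\cdot c_{1}(\mathcal{L})^{n}\bigr)=-(n+1)\,f_{*}\bigl((-K_{\mathcal{X}/B}-\mathcal{D})\cdot c_{1}(\mathcal{L})^{n}\bigr)$, which is exactly (\ref{cmlog}). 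The only genuine difficulty is bookkeeping: one must pin down sign conventions in the Paul--Tian formula consistent with the definition adopted in the previous paragraph, and make sure the flatness of $f|_{\mathcal{D}_{i}}$ is used precisely so that pushforward along $f$ commutes with the cycle-theoretic restriction to $\mathcal{D}_{i}$; everything else is a formal rearrangement.
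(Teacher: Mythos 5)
The paper does not prove this proposition at all: it is quoted verbatim from \cite{ADL19} as a known result, so there is no in-paper argument to compare against. Your reconstruction is correct and is essentially the standard derivation used in the cited source: unwind the definition $\lambda_{CM,\mathcal{D}}=\lambda_{CM}-\tfrac{n\,\mathcal{L}_b^{n-1}\cdot\mathcal{D}_b}{\mathcal{L}_b^n}\lambda_{CH}+(n+1)\lambda_{CH,\mathcal{D}}$, compute $c_1(\lambda_{CH})=f_*c_1(\mathcal{L})^{n+1}$ and $c_1(\lambda_{CH,\mathcal{D}})=f_*\bigl(c_1(\mathcal{L})^n\cdot\mathcal{D}\bigr)$ via Knudsen--Mumford plus Grothendieck--Riemann--Roch (applied to each $f|_{\mathcal{D}_i}$, which is where flatness of the divisorial families enters), insert the Paul--Tian expression for $c_1(\lambda_{CM})$ from \cite{PT09} (the $\mathbb{Q}$-Gorenstein hypothesis being what allows the GRR step for a Cartier multiple of $K_{\mathcal{X}/B}$), and collect terms to obtain \eqref{cmlog}; your sign conventions are consistent with the $\mathcal{D}=0$ specialization, so the bookkeeping closes correctly.
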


When computing the intersection number of the log CM line bundle on a $\mathbb{Q}$-Gorenstein flat family on the moduli of pairs, it is important to choose a ample line bundle on such family. In the state of the art of these possible choices, mainly we can make two of such. These choices are presented in the following

\begin{corollario}\label{cpsgm} Let $f: (\mathcal{X}, \mathcal{D} := \sum_{i=1}^k d_i \mathcal{D}_i) \rightarrow B$ be a $\mathbb{Q}$-Gorenstein flat family over a normal proper variety $B.$  We have

\begin{enumerate}

\item \cite[Section 1.2, Equation 1.7.a]{CodPat} Given $\mathcal{L}= -K_{\mathcal{X}/B} - \mathcal{D}$, then $$ c_1(\lambda_{CM, \mathcal{D}}) = -f_{*}c_1(-K_{\mathcal{X}/B} - \mathcal{D})^{n+1}.$$
\item \cite[Theorem 2.7]{SGM18} Given $\mathcal{L}= -K_{\mathcal{X}/B},$ and $\mathcal{D}_{\mathcal{X}_b} \in | -K_{\mathcal{X}_b} |, \ \forall b \in B$, then
\begin{align*} c_1 (\lambda_{CM, \mathcal{D}}) &= f_{*} (-K_{\mathcal{X}/B})^{n} \cdot (-c_1 (-K_{\mathcal{X}/B}) \\ &+ \sum_{i=1}^k d_i((n+1) \mathcal{D}_i - n c_1 (-K_{\mathcal{X}/B})) .
\end{align*}  
\end{enumerate}
\end{corollario}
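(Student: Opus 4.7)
The plan is to derive both formulas by direct substitution into equation~(\ref{cmlog}) of Proposition~\ref{pr} with the two indicated choices of relatively ample line bundle, followed by an elementary cancellation. Neither item requires any ingredient beyond the proposition itself.

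For part~(1), I would set $\mathcal{L} := -K_{\mathcal{X}/B} - \mathcal{D}$, so that on each fibre $\mathcal{L}_b = -K_{\mathcal{X}_b} - \mathcal{D}_b$. The coefficient $\tfrac{(-K_{\mathcal{X}_b} - \mathcal{D}_b)\cdot \mathcal{L}_b^{n-1}}{(\mathcal{L}_b^n)}$ appearing in (\ref{cmlog}) then collapses to $1$, while $-(n+1) f_{*}((-K_{\mathcal{X}/B} - \mathcal{D}) \cdot c_1(\mathcal{L})^n)$ becomes $-(n+1) f_{*} c_1(\mathcal{L})^{n+1}$. Summing the two contributions leaves a coefficient of $n - (n+1) = -1$ in front of $f_{*} c_1(\mathcal{L})^{n+1}$, giving the claimed identity.

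For part~(2), I would take $\mathcal{L} := -K_{\mathcal{X}/B}$, so that $\mathcal{L}_b = -K_{\mathcal{X}_b}$. The hypothesis $\mathcal{D}_{i,b} \in |-K_{\mathcal{X}_b}|$ gives the numerical equivalence $\mathcal{D}_{i,b} \equiv \mathcal{L}_b$ fibrewise for each $i$, from which I read
$$\mathcal{D}_b \cdot \mathcal{L}_b^{n-1} = \Bigl(\sum_{i=1}^k d_i\Bigr)\mathcal{L}_b^n.$$
Plugging this into the coefficient of the first term of (\ref{cmlog}) yields $n(1 - \sum_{i} d_i)$, and splitting the second term along its $-K_{\mathcal{X}/B}$ and $\mathcal{D}$ parts produces an additional coefficient $-(n+1)$ on $f_{*} c_1(\mathcal{L})^{n+1}$ together with the residual term $(n+1) \sum_{i} d_i\, f_{*}(\mathcal{D}_i \cdot c_1(\mathcal{L})^n)$. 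After collecting, the coefficient of $f_{*} c_1(\mathcal{L})^{n+1}$ becomes $-(1 + n \sum_{i} d_i)$; factoring $c_1(-K_{\mathcal{X}/B})^n$ out of the push-forward and regrouping the summation then puts the expression into the form displayed in the corollary.

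Neither part is intrinsically difficult; the entire content is careful bookkeeping inside formula~(\ref{cmlog}). The only non-trivial point is part~(2), where the linear equivalence $\mathcal{D}_{i,b} \sim -K_{\mathcal{X}_b}$ is what allows the fractional coefficient in (\ref{cmlog}) to be replaced by a clean combinatorial expression in the weights $d_i$ rather than by an intersection number that varies with $i$. The $\mathbb{Q}$-Gorenstein assumption on $f$ and the flatness of each $\mathcal{D}_i \to B$ are used implicitly to ensure that the restrictions of $K_{\mathcal{X}/B}$ and of each $\mathcal{D}_i$ to a fibre behave as expected, so that the intersection computations carried out fibrewise agree with the push-forward computations performed on $\mathcal{X}$.
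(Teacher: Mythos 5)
Your derivation is correct: in part (1) the fibrewise coefficient collapses to $1$ and the two terms of \eqref{cmlog} combine with coefficient $n-(n+1)=-1$, and in part (2) the hypothesis $\mathcal{D}_{i,b}\in|-K_{\mathcal{X}_b}|$ gives the coefficient $-(1+n\sum_i d_i)$ on $f_*c_1(-K_{\mathcal{X}/B})^{n+1}$ plus the term $(n+1)\sum_i d_i\, f_*(\mathcal{D}_i\cdot c_1(-K_{\mathcal{X}/B})^n)$, which is exactly the displayed formula. The paper gives no written proof, stating the result as a corollary of Proposition \ref{pr} with citations to \cite{CodPat} and \cite{SGM18}, and your argument is precisely the intended substitution into \eqref{cmlog}, so it matches the paper's (implicit) approach.
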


\begin{osservazione} In the above, if we put $\mathcal{D}=0$ then we recover the absolute case studied in \cite{PT09}, \cite{FR06}. 

\end{osservazione}

\subsection{On the log Weil-Petersson metric.}

Consider a $\mathbb{Q}$-Gorenstein smoothable family $f: (\mathcal{X}, \mathcal{D}) \rightarrow B$ over a complex space $B.$ Suppose $f$ is of pure dimension $n,$ $(\mathcal{X}, \mathcal{D})$ is a log Fano Kawamata log terminal (klt) Pair. We assume, henceforth, that the divisor $\mathcal{D}= \sum_{k=1}^m(1-\beta_k)[s_k=0]$ is fiberwise simple normal crossing. In this setting we have a K\"{a}hler metric $\omega$ on $X \setminus \bigcup_k D_k$ which is quasi isometric, to the model cone metric with cone angles $2 \pi \beta_k$ along $[z_k=0]$

$$ \omega_{\mathrm{cone}} = \sum_{k=1}^m \frac{1}{|z_k|^{2(1 - \beta_k)}}\sqrt{-1}dz_k \wedge d\bar{z}_k + \sum_{k=m+1}^n \sqrt{-1} dz_k \wedge d \bar{z}_k. $$
The above is nowadays a standard result in the literature, see \cite{GuePa}, \cite{Gue12}, \cite{Gue13}.
One might ask whether one can find a K\"{a}hler-Einstein metric (KE) $\omega$ on $X \setminus \mathrm{supp}(\mathcal{D})$ having conic singularities along $\mathcal{D}.$ This metric will be referred henceforth as conic K\"{a}hler metric (cKE). In \cite[Theorem A]{GuePa} it is proven that the answer to that question is positive in the K-stable situation and it is sufficient to produce a weak solution $\omega_{\varphi} = \omega + \sqrt{-1} \partial \bar{\partial} \varphi$ of the so called \emph{global Monge-Ampere} equation

$$ \omega^n = \frac{e^{\varphi} dV}{\prod_k |s_k|^{2(1-\beta_k)}},$$

where $dV$ is a smooth volume form. Moreover it is proven \cite[Theorem B]{GuePa} that any solution $\varphi$ of the above equation is bounded and plurisubharmonic, more precisely $\varphi \in \mathcal{C}^{2,\alpha, \beta},$ where $C^{2, \alpha, \beta}$   is a \emph{conic H\"{o}lder space} defined in \cite[sec. 7.1 Definition 1]{GuePa}.

The log K-stability was introduced in \cite{Li11}. It is proved in \cite{Ber16} that if a log Fano pair $(X,D)$ admits a cKE metric, then it is log K-polystable as defined in \cite{Li11}. The converse is also true and it has been recently proven by \cite{TiaWa}. Therefore we can assume that the above family $f: (\mathcal{X}, \mathcal{D}) \rightarrow B$ has log K-polystable fibers. Namely, $\forall b \in B, \ (\mathcal{X}_b, \mathcal{D}_b)$ is a log K-polystable Fano pair with cKE metric $\omega_b = \omega_0 + \sqrt{-1}\partial \bar{\partial} \varphi,$ where $\varphi \in \mathcal{C}^{2, \alpha, \beta},$ is a solution of the global Monge Ampere equation, and $\omega_0 \in c_1 (-K_{\mathcal{X}/B} - \mathcal{D}).$ Let $A$ be the subset of $B$ parametrizing the singularities of $B,$ denote its complement, i.e. the \emph{smooth locus} by $B^0$ and by $f^0 : \mathcal{X}^0 \rightarrow B^0$ the corresponding smooth family. For all $b \in B^0$ the corresponding cKE metric varies in $\mathcal{X}_b^0.$ At every $b \in B^0$ we obtain a Hermitian metric $\{\omega_b ^n\}$ on $-K_{\mathcal{X}^0 / B^0} - \mathcal{D}.$ At every $b \in B$, we can produce a map

$$ \mathcal{E} : C^{2, \alpha , \beta} \times [0,1] \rightarrow C^{\alpha, \beta} $$defined as

$$ \mathcal{E}( \varphi, \epsilon) = \log \frac{\omega_{b, \varphi_{\epsilon}}}{\Omega_{b,\epsilon}} - ( F_{\epsilon} + \phi_{\epsilon}). $$
Where $\phi_{\epsilon}$ is a smooth approximation of the solution $\varphi$ and $$F_{\epsilon}= - \log \left( \frac{\prod_k ( \epsilon^2 + |s_k|^2)^{1- \beta_k}}{\Omega_{b, \epsilon}} \right).$$
The derivative of $\mathcal{E}$ in $\varphi$ leads to the \emph{Laplacian operator}, which by \cite{GuePa}, and \cite{Don12},  has bounded estimates. Moreover, also in \cite{GuePa} it is proven that $\omega_{b,\varphi_{\epsilon}}$ converges smoothly to $\omega_{b, \varphi}$ outside the support of the divisor and along $\mathcal{D}$ it has only conic singularities. We denote by $\omega_{\mathcal{X}^0}$ its Chern curvature, namely

$$ \omega_{\mathcal{X}^0} = -\frac{\sqrt{-1}}{2 \pi}  \partial \bar{\partial} \log \{ \omega_b^n\}. $$

Because of \cite[Lemma 4,section 7.1.2.]{GuePa} we still have a control on the derivatives of such metrics, namely these latters belong to the class $C^{\alpha, \beta}.$ In this setting, we define as in \cite{FS90}, and \cite[Theorem 4.1]{LWX15}.
  
\begin{definizione} \label{logwp} In the above setting, we define the \emph{log Weil-Petersson} metric as the fiber integral

\begin{equation} \label{eq1} \omega_{\mathrm{WP}}^0 := - \int_{\mathcal{X}^0 / B^0} \omega_{\mathcal{X}^0}^{n+1}.\end{equation}

\end{definizione}

In \cite{FS90} it is proven in the non log case that there exists a Quillen metric $h_{\mathrm{QM}}$ on $\lambda_{\mathrm{CM}_{|B^0}}$ such that its Chern curvature is the Weil-Petersson metric. In \cite{Del87} Deligne showed how to calculate such Quillen metric, via the so called \emph{metrized Deligne Pairing}.
In the next result we collect two results that will be useful for later discussions.

\begin{teorema} \label{teo1} Let $g: \mathcal{X} \rightarrow B$ be a flat projective morphism of integral schemes of finite type, of relative dimension $n.$ Let $(\mathcal{L}_0, h_0), ...,( \mathcal{L}_n,h_n)$ be a $(n+1)-$tuple of hermitian line bundles on $\mathcal{L},$ denote the Deligne pairing associated to that tuples as the symbol $< \mathcal{L}_0 , ..., \mathcal{L}_n>.$ We have

\begin{enumerate}
\item (\cite[Theorem 2.1]{Mor99}) Suppose $h_i$ are smooth metrics $\forall i \in \{0,1,...,n\}.$ Then the Deligne's metric $h_{DP}$ is continuous on $<\mathcal{L}_0, ...,  \mathcal{L}_n>.$
\item (\cite[Proposition 8.5]{Del87}) The following Chern-Weil curvature formula holds for Deligne's metric 
$$ c_1 (<\mathcal{L}_0, ..., \mathcal{L}_n>) = \int_{\mathcal{X}/B} c_1(\mathcal{L}_0) \wedge ... \wedge c_1(\mathcal{L}_n).$$
\end{enumerate}  

\end{teorema}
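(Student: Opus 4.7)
The plan is to handle both statements by induction on the relative dimension $n$, using the inductive definition of the Deligne pairing. Recall that $\langle \mathcal{L}_0, \ldots, \mathcal{L}_n \rangle$ is defined starting from $n=0$ (where it is simply the norm functor) and then built up by choosing, in a local trivialization over a small open $U \subset B$, a section $s_n$ of $\mathcal{L}_n$ whose divisor $Z(s_n)$ is flat of relative dimension $n-1$ over $U$; one then sets, on symbols,
\begin{equation*}
\langle \ell_0, \ldots, \ell_n \rangle \;=\; \langle \ell_0|_{Z(s_n)}, \ldots, \ell_{n-1}|_{Z(s_n)} \rangle
\end{equation*}
modulo the standard Weil reciprocity relations. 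The hermitian structure on the pairing is defined by the recursive rule
\begin{equation*}
-\log \|\langle \ell_0, \ldots, \ell_n \rangle\|_{\mathrm{DP}}^2
= -\log \|\langle \ell_0|_{Z(s_n)}, \ldots, \ell_{n-1}|_{Z(s_n)} \rangle\|_{\mathrm{DP}}^2
+ \int_{\mathcal{X}/B} \log \|s_n\|_n^2 \cdot c_1(\mathcal{L}_0, h_0) \wedge \cdots \wedge c_1(\mathcal{L}_{n-1}, h_{n-1}),
\end{equation*}
and a Weil-reciprocity computation shows this is independent of the choice of $s_n$.

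For part (1), I would proceed by induction. The base case $n=0$ is just the continuity of the norm of a smooth hermitian metric. For the inductive step, the first summand in the displayed recursion above is continuous on $U$ by the inductive hypothesis applied to $g|_{Z(s_n)} : Z(s_n) \to U$. For the fiber-integral summand, the integrand $\log \|s_n\|_n^2$ is locally $L^1_{\mathrm{loc}}$ with logarithmic singularities along $Z(s_n)$, while the wedge product of the smooth Chern forms is a smooth $(n,n)$-form. The hard part here is the continuity of the fiber integral at base points where the fiber geometry jumps; this is handled by Moriwaki's use of resolution of singularities combined with Fubini-type arguments for semi-positive currents against smooth forms, together with a dominated-convergence estimate using that the integrand has at worst $\log$ singularities and the family is flat projective. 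I expect this to be the main technical obstacle in the whole proof.

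For part (2), I would again induct on $n$. Applying $-dd^c$ (understood as currents on $B$) to the recursive formula and pushing forward, the Poincar\'{e}--Lelong formula
\begin{equation*}
dd^c \log \|s_n\|_n^2 = c_1(\mathcal{L}_n, h_n) - \delta_{Z(s_n)}
\end{equation*}
lets me rewrite
\begin{equation*}
dd^c \!\int_{\mathcal{X}/B} \log \|s_n\|_n^2 \wedge \bigwedge_{i<n} c_1(\mathcal{L}_i, h_i)
= \int_{\mathcal{X}/B} \bigwedge_{i=0}^{n} c_1(\mathcal{L}_i, h_i) \;-\; \int_{Z(s_n)/B} \bigwedge_{i<n} c_1(\mathcal{L}_i, h_i),
\end{equation*}
by commuting $dd^c$ with the fiber integral (which requires part (1) to make the manipulation rigorous at the level of currents). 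The last term cancels precisely against the contribution coming from applying the inductive hypothesis to the first summand of the recursion, yielding the desired identity. The main subtlety to check is that the commutation of $dd^c$ with $\int_{\mathcal{X}/B}$ is valid against test forms on $B$, which reduces by Stokes to boundary-vanishing on the smooth locus of $g$ plus negligibility of the discriminant, standard once the smoothness of the $h_i$ is used to bound the relevant Chern forms uniformly.
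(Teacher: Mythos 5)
The paper does not actually prove this statement: Theorem \ref{teo1} is presented as a collection of two results quoted from the literature, point (1) being Moriwaki's continuity theorem and point (2) Deligne's curvature formula, and no argument is given beyond the citations. So there is no proof in the paper to compare yours against; what you have written is a sketch of the standard arguments from those references, namely the inductive definition of the pairing and of its metric, the Poincar\'e--Lelong formula with cancellation of the $\delta_{Z(s_n)}$ term for the curvature identity, and induction on the relative dimension for continuity.

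Judged on its own terms, your part (2) has the right architecture and is essentially the classical argument (watch the sign: with the usual normalization $dd^c \log\|s_n\|^2 = \delta_{Z(s_n)} - c_1(\mathcal{L}_n,h_n)$, which is opposite to what you wrote, so you must check that your recursion and the cancellation are consistent). The genuine gap is in part (1). The continuity of $b \mapsto \int_{\mathcal{X}_b} \log\|s_n\|^2 \, c_1(\mathcal{L}_0,h_0)\wedge \cdots \wedge c_1(\mathcal{L}_{n-1},h_{n-1})$ at points of $B$ where the fiber degenerates, or where $Z(s_n)$ meets the fiber in unexpected dimension, is precisely the content of Moriwaki's theorem, and it is not a dominated-convergence exercise: there is no evident integrable dominating function uniform in $b$ near such points, and the difficulty is exactly that $\log\|s_n\|^2$ restricted to nearby fibers can degenerate. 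Your sentence deferring this to ``Moriwaki's use of resolution of singularities combined with Fubini-type arguments \dots together with a dominated-convergence estimate'' appeals to the result you are meant to prove rather than proving it. The inductive step also needs you to produce $s_n$ with $Z(s_n)$ flat over $U$ and such that the induction hypothesis applies to $Z(s_n)\to U$, and to verify independence of the choice of $s_n$; both are asserted without argument. In short, part (2) is an acceptable sketch of the known proof, while part (1) as written leaves its only nontrivial step to the citation --- which is, in effect, what the paper itself does.
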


Henceforth, we will denote the Deligne's Pairing of a repeated line bundle $\mathcal{L}$ as

$$ \mathcal{L}^{<n+1>} = \underbrace{<\mathcal{L}, ..., \mathcal{L}>}_{\text{$(n+1)-$times}}.$$

The previously defined log CM line bundle can be interpreted with the Deligne Pairing.  Namely, along the line of \cite[Section 2.3]{Ber16}, \cite[Theorem 4.9]{LWX15}, we can define

$$\label{teo2} \lambda_{\mathrm{CM}, \mathcal{D}} = - \left(-K_{\mathcal{X}/\mathcal{B}} - \mathcal{D} \right)^{<n+1>} $$
Because of  Corollary \ref{cpsgm} of previous section,for a $\mathbb{Q}$-Gorenstein family of log Fano pair $f: (\mathcal{X}, \mathcal{D}) \rightarrow B$ the first Chern class of the log CM line bundle is given by $c_1 (\lambda_{\mathrm{CM}, \mathcal{D}})= - f_{*}c_1(-K_{\mathcal{X}/B} - \mathcal{D})^{n+1}.$ It can be rephrased in the following way

$$ c_1 (\lambda_{\mathrm{CM}, \mathcal{D}})= - \int_{\mathcal{X}/B} \underbrace{c_1(-K_{\mathcal{X}/B} - \mathcal{D}) \wedge ... \wedge c_1(-K_{\mathcal{X}/B} - \mathcal{D})}_{\text{$(n+1)-$times}}. $$

Thus by point 2 in \ref{teo1} above, we have

$$ c_1 (\lambda_{\mathrm{CM}, \mathcal{D}})= c_1  \left((-K_{\mathcal{X}/B} - \mathcal{D})^{<n+1>}\right)$$

Now we are ready to prove a new statement, that extends the notion of Weil-Petersson metric in the case of  log Fano pairs.

\begin{teorema} \label{teo3} Let $f: (\mathcal{X}, \mathcal{D}) \rightarrow B$ be a $\mathbb{Q}-$Gorenstein smoothable family of pure dimension $n,$ where $(\mathcal{X}, \mathcal{D})$ is a log klt pair, and the divisor $\mathcal{D}$ is fiberwise simple normal crossing. Then there exists a continuous metric $h_{\mathrm{DP}}$ on $\lambda_{\mathrm{CM}, \mathcal{D}}$ such that 
\begin{enumerate}
\item $\omega_{\mathrm{WP}} := \frac{\sqrt{-1}}{2 \pi} \partial \bar{\partial} \log h_{\mathrm{DP}}$ is a positive current.
\item $\omega_{\mathrm{WP}_|B^0} = \omega_{WP}^0.$
\end{enumerate}

\end{teorema}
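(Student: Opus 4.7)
The strategy is to adapt the proof of [LWX15, Theorem 4.1] to the log setting, using Theorem \ref{teo2} to identify $\lambda_{\mathrm{CM},\mathcal{D}}$ with the Deligne self-pairing $-(-K_{\mathcal{X}/B}-\mathcal{D})^{<n+1>}$. The metric we seek will then be (minus) the metrized Deligne pairing of a suitable Hermitian metric on $-K_{\mathcal{X}/B}-\mathcal{D}$, so the whole problem reduces to producing such a Hermitian metric with controlled regularity and curvature.

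The plan is three steps. First, on the smooth locus $B^0$, the fiberwise cKE metrics $\omega_b$ fit together into the Hermitian metric $\{\omega_b^n\}$ on $-K_{\mathcal{X}^0/B^0}-\mathcal{D}$ already recalled before Definition \ref{logwp}; its Chern curvature is $\omega_{\mathcal{X}^0}$. The associated Deligne pairing metric $h_{\mathrm{DP}}^0$ on $\lambda_{\mathrm{CM},\mathcal{D}}|_{B^0}$ is then well-defined, and by Theorem \ref{teo1}(2) combined with the sign coming from Theorem \ref{teo2} and Definition \ref{logwp}, its Chern curvature equals exactly $\omega_{\mathrm{WP}}^0$. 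This yields (2) once a global extension is constructed whose restriction is $h_{\mathrm{DP}}^0$.

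Second, to extend $h_{\mathrm{DP}}^0$ across the singular locus $A=B\setminus B^0$, I would approximate the cKE metric by the smooth Guenancia–Paun regularization $\omega_{b,\varphi_\epsilon}$. For each fixed $\epsilon>0$ this gives a smooth Hermitian metric on $-K_{\mathcal{X}/B}-\mathcal{D}$ over all of $B$, hence by Moriwaki's result (Theorem \ref{teo1}(1)) a continuous Deligne metric $h_{\mathrm{DP},\epsilon}$ on $\lambda_{\mathrm{CM},\mathcal{D}}$ globally. One then shows $h_{\mathrm{DP},\epsilon}\to h_{\mathrm{DP}}$ locally uniformly on $B$: on $B^0$ this uses the smooth convergence $\omega_{b,\varphi_\epsilon}\to\omega_b$ away from $\mathcal{D}$ together with the conic Hölder bounds of [GuePa, Lemma 4, sec. 7.1.2]; across $A$ one exploits the uniform $L^\infty$ bound on the Monge–Ampère potentials available because the fibers are log K-polystable, in order to pass to the limit in the fiber-integral description of the Deligne metric. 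Continuity of the limit is then automatic from the uniform convergence.

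Third, positivity as a current: for each $\epsilon$ the curvature $\tfrac{\sqrt{-1}}{2\pi}\partial\bar\partial\log h_{\mathrm{DP},\epsilon}$ is computed by Theorem \ref{teo1}(2) as the fiber integral $\int_{\mathcal{X}/B}c_1(-K_{\mathcal{X}/B}-\mathcal{D},\{\omega^n_{b,\varphi_\epsilon}\})^{n+1}$, and the Berndtsson-type/Fujiki–Schumacher argument used in [LWX15, §4] shows this is $\geq 0$ as a $(1,1)$-current on $B$. Taking weak limits as $\epsilon\to 0$ preserves non-negativity, yielding (1), while uniqueness of the limit together with step one gives (2).

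The main obstacle will be managing two kinds of singularity at once: the conic singularities of $\omega_b$ along $\mathcal{D}_b$ in each fiber, and the degenerations of the fibers themselves over $A$. The first is controlled by [GuePa] through the map $\mathcal{E}$ and the smoothing $\omega_{b,\varphi_\epsilon}$; the decisive difficulty is the uniform (in $\epsilon$ and in $b$) $L^\infty$ estimate on the cKE potentials needed to extend $h_{\mathrm{DP}}$ continuously across $A$. This rests on the log K-polystability of every fiber via [Ber16] and [TiaWa], and is precisely the step where the log generalization of [LWX15] is non-formal.
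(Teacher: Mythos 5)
Your overall frame (identify $\lambda_{\mathrm{CM},\mathcal{D}}$ with the Deligne self-pairing via Theorem \ref{teo2}, take the Deligne metric of the fiberwise cKE metrics on $B^0$ so that its curvature is $\omega_{\mathrm{WP}}^0$, then extend across $A$) agrees with the paper, but the mechanism you propose for the extension is where the argument has a genuine gap. The Guenancia--Paun regularization $\omega_{b,\varphi_\epsilon}$ only smooths the conic singularities along $\mathcal{D}_b$ inside a fixed fiber; it does nothing about the degeneration of the fibers over $A$, so for fixed $\epsilon$ you do not obtain a smooth Hermitian metric on $-K_{\mathcal{X}/B}-\mathcal{D}$ over all of $B$, and Moriwaki's continuity statement (Theorem \ref{teo1}, part 1) cannot be applied globally as you claim. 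Likewise, the decisive estimate you invoke --- a uniform $L^\infty$ bound on the Monge--Amp\`ere potentials ``because the fibers are log K-polystable'' --- is precisely the hard point, and polystability alone does not deliver it. What the paper actually uses is the partial $C^0$-estimate, extended to simple normal crossing log pairs in \cite{TiaWa}: this gives Tian-type embeddings of the total space into projective space and hence a second family of reference (Fubini--Study type) fiber metrics $\{\tilde{\omega}_b^n\}$ defined across $A$; the change-of-metric formula for Deligne pairings then gives $h_{\mathrm{DP}}=\tilde{h}_{\mathrm{DP}}e^{-\Phi}$ with $\Phi$ continuous and uniformly bounded by \cite[Proposition 2.14]{SSY15} (with the conic H\"older control of \cite{GuePa} ensuring the Deligne metrics make sense along the divisors), and the extension of $\omega_{\mathrm{WP}}$ as a positive current then follows from pluripotential theory, \cite[Lemma 4.13]{LWX15}. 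Your proposal asserts the output of this mechanism without supplying a substitute for it.

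There is a second gap in your positivity step. For fixed $\epsilon$ the curvature of $h_{\mathrm{DP},\epsilon}$ is the fiber integral of the $(n+1)$-st power of the curvature of $\{\omega_{b,\varphi_\epsilon}^n\}$, and this is \emph{not} known to be a nonnegative $(1,1)$-current: the Fujiki--Schumacher/Berndtsson-type positivity you invoke uses the fiberwise K\"ahler--Einstein equation, which the regularized metrics satisfy only approximately, so positivity for each $\epsilon$ does not follow and a weak limit cannot be taken to conclude. In the paper positivity is only needed for the genuine conic KE family over $B^0$, and the passage across $A$ is achieved by the bounded-potential (plurisubharmonic extension) argument just described, not by a limit of $\epsilon$-curvatures. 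To repair your write-up you would need to replace steps two and three by the partial $C^0$-estimate comparison with the Fubini--Study reference metrics and the change-of-metric formula, which is exactly the route of \cite{LWX15} adapted here.
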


\begin{dimostrazione} It is proved in \cite{TiaWa} that the partial $C^0-$estimates naturally extends in the log case for symple normal crossing pairs. Therefore, as explained at the beginning of  \cite[Section 3]{LWX15} by embedding the total space into a suitable projective space, we can define two sets of metrics along fibers $\{\omega_b^n \},$ and $\{\tilde{\omega}_b^n\}.$
By assertion 1 of \ref{teo1} both sets, outside the support of the divisor, define Deligne's metrics on $\lambda_{\mathrm{CM}, \mathcal{D}},$ which we call $h_{\mathrm{DP}}$ and $\tilde{h}_{\mathrm{DP}}$ respectively.  Since, the Chern curvature depends fiberwise on the $\partial \bar{\partial}$  derivatives of $\omega_b^n,$ for all $b \in B$, then by \cite[Lemma 4 in section 7.1.2.]{GuePa}  we still have a control on the derivatives of such metrics. Therefore we can consider the Deligne's metrics $h_{\mathrm{DP}},$ and $\tilde{h}_{\mathrm{DP}}$  to be well defined also along the Divisors.

By the \emph{change of metric} formula of Deligne's pairing \cite{LWX15}, \cite{Del87}, \cite{Mor99} we have 

$$ h_{\mathrm{DP}}= \widetilde{h_{\mathrm{DP}}} \cdot e^{-\Phi}, $$

Where 

$$ \Phi = -\frac{1}{(n+1)!} \sum_{i=1}^n \int_{\mathcal{X}_b} \tilde{\varphi} \tilde{\omega}_b^{i} \wedge \omega_b^{n-i}. $$

As it is proved in \cite[Proposition 2.14]{SSY15}, $\Phi$ is continuous and uniformly bounded. Notice that over each fiber $(\mathcal{X}_b,\mathcal{D}_b)$ we have $\omega_b = \tilde{\omega}_b + \frac{\sqrt{-1}}{2 \pi} \partial \bar{\partial} \tilde{\varphi}.$ By looking at the curvature data we find

$$ \omega_{\mathcal{X}}= \frac{\sqrt{-1}}{2 \pi} \partial \bar{\partial} \log \{\omega_b^n\} = \tilde{\omega}_{\mathcal{X}} + \frac{\sqrt{-1}}{2 \pi} \partial \bar{\partial} \tilde{\varphi}.$$

In the smooth part we have that the following identity holds:

$$ \omega_{\mathcal{X}^0}^{n+1}= \tilde{\omega}^{n+1} + \frac{\sqrt{-1}}{2 \pi} \partial \bar{\partial} \Phi,$$ 

where we denoted by $\tilde{\omega}$ the curvature data of $\{\tilde{\omega}_b^n\}.$ Notice that both $\omega_{\mathcal{X}^0}$ and $\tilde{\omega}$ are smooth $(1,1)$-forms over $\mathcal{X}^0,$ so we can take fiber integrals 

$$ \int_{\mathcal{X}^0 / B^0} \omega_{\mathcal{X}^0}^{n+1}= \int_{\mathcal{X}^0/B^0} \tilde{\omega}^{n+1} + \frac{\sqrt{-1}}{2 \pi} \partial \bar{\partial} \Phi,$$

we see that the above identity is nothing but

$$ \omega_{\mathrm{WP}}^0 = - \frac{\sqrt{-1}}{2 \pi} \partial \bar{\partial} \log \tilde{h}_{\mathrm{DP}} + \frac{\sqrt{-1}}{2 \pi} \partial \bar{\partial} \Phi  = - \frac{\sqrt{-1}}{2 \pi} \partial \bar{\partial} \log h_{\mathrm{DP}|B^0}. $$

The claim follows from regularity results in pluripotential theory, see \cite[Lemma 4.13]{LWX15}. \hfill $\Box$

\end{dimostrazione}
It is well known that the weight of the CM line bundle coincides with the Donaldson-Futaki invariant. In particular it is zero on $K$-polystable points and thus it descends as a $\mathbb{Q}$-line bundle on the $K$-proper moduli space. Then, there is a well defined continuous Deligne's pairing metric whose curvature is a positive current as in \ref{teo3}.  In conclusion, to compute the log Weil-Petersson volume we can simply compute the degree of the descended CM line bundle.

\section{The CM Volume of quartic del Pezzo moduli space}

We begin this section with a brief description of the intersection of two quadrics and then we describe the moduli space used for computing the CM volume of the latter in any dimension. 

\subsection*{A brief description of the intersection of two quadrics}

Let $X= Q_1 \cap Q_2$ be the intersection of two quadrics $Q_1$ and $Q_2,$ in $\mathbb{P}(V),$ where $V$ is a complex vector space of dimension $n+1.$ Consider a pencil of the quadrics $Q_1$ and $Q_2,$ that is 
 $\Phi = Span \{ \phi_{\lambda}\}_{\lambda \in \mathbb{P}^1}.$Where every $\phi_{\lambda}$ denotes the bilinear form associated to the quadric $Q_{\lambda} = Q_1 + \lambda \cdot Q_2$ in $\mathbb{P}^n$. A pencil of quadrics $\Phi$ is said to be smooth, or non singular, if  $Q_{\lambda}$ is non singular or have at most simple cones as singularities (that is an ordinary double point) $\forall \lambda \in \mathbb{P}^1$, \cite[Chapter 1]{Reid}. For the $\lambda'$s for which $Q_{\lambda}$ is degenerate, if $e_{\lambda}$ is a basis for $ker(\phi_{\lambda}),$ then $\phi_{\mu}(e_{\lambda}) \neq 0$ for all $\lambda \neq \mu.$ If $e \in V$, we define the orthogonal complement with respect to $\Phi$ as follows 

$$ e^{\perp} = \cap_{\lambda \in \mathbb{P}^1} e^{\perp_{\phi_{\lambda}}}= e^{\perp_{\phi_1}} \cap e^{\perp_{\phi_2}} $$

Note that, for most of $e$, the above will be of codimension two. If the codimension of $e^{\perp}$ is less than two, then $e \in ker(\phi_{\lambda})$ for some $\lambda \in \mathbb{P}^1.$ Hence $X$ is nonsingular and of codimension two in $\mathbb{P}(V)$  if and only if for all $x \in X$, $e \in V$ representing $x$, then $e$ is not in the kernel of $\phi_{\lambda}$ for any $\lambda \in \mathbb{P}^1.$ Smoothness of $X$ can be characterized through smoothness of $\Phi$ as the following proposition shows

\begin{proposizione} (\cite{Reid}) Let $X$ be as above, then the following conditions are equivalent 

\begin{enumerate} \item $X$ is smooth and of codimension 2,
\item $\Phi$ is non singular,
\item The discriminant $det(\phi_{\lambda}) \neq 0$ is a polynomial in $\lambda$ of degree the dimension of $V$ and has $n+1$ distinct roots,
\item there exists a unique basis of $V$ $\{e_i\}_{i=1}^{dimV}$ such that
\begin{equation}
   \begin{cases}
   \phi_1 ( \sum_i x_i e_i)= \sum_i x_i ^2 \\ \phi_2( \sum_i x_i e_i)= \sum_i \lambda_i x_i ^2, \ \lambda_i \neq \lambda_j, \ i \neq j
   \end{cases}
\end{equation}
\end{enumerate}

\end{proposizione}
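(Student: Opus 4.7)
The plan is to establish the four equivalences cyclically, essentially $(1)\Leftrightarrow(2)$, $(2)\Leftrightarrow(3)$, $(3)\Leftrightarrow(4)$, using the characterization of smoothness in terms of kernels already recorded before the statement.

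For $(1)\Leftrightarrow(2)$, I would start from the observation prepared above: $X$ is smooth of codimension $2$ if and only if no nonzero $e\in V$ representing a point of $X$ lies in $\ker\phi_\lambda$ for any $\lambda\in\mathbb{P}^1$. Assume $(2)$: each $\phi_\lambda$ is either non-degenerate or has a one-dimensional kernel, and $\phi_\mu(e_\lambda)\neq 0$ for $\mu\neq\lambda$. Then the unique (up to scalar) direction in $\ker\phi_\lambda$ fails to vanish on some other quadric, so it does not represent a point of $X$; smoothness follows. Conversely, if $(2)$ fails, some $\phi_\lambda$ has a kernel $K$ of dimension $\geq 2$. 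Intersecting $\mathbb{P}(K)$ with any other quadric $Q_\mu$ in the pencil produces a point of $X$ lying in $\ker\phi_\lambda$, contradicting smoothness. This reduces the first equivalence to bookkeeping about dimensions of kernels.

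For $(2)\Leftrightarrow(3)$, I would view $d(\lambda):=\det\phi_\lambda$ as a homogeneous polynomial of degree $\dim V$ in the two parameters defining the pencil. Degeneracy of $\phi_\lambda$ corresponds to $\lambda$ being a root of $d$; moreover a standard linear algebra computation shows that the order of vanishing of $d$ at $\lambda_0$ equals $\dim\ker\phi_{\lambda_0}$ exactly when the kernel is one-dimensional (a simple root $\Leftrightarrow$ simple cone), and is strictly larger when $\dim\ker\phi_{\lambda_0}\geq 2$. The degree being honestly $\dim V$ prevents the ``missing'' root at infinity, i.e., prevents $\phi_1$ and $\phi_2$ from degenerating simultaneously along the highest-order direction. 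Hence $(2)$ is exactly the statement that $d$ has $\dim V = n+1$ distinct roots in $\mathbb{P}^1$.

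For $(3)\Leftrightarrow(4)$, I would use the classical simultaneous diagonalization of a pencil. Assuming $(3)$, some $\phi_\mu$ (say $\phi_1$ after a change of parameter) is non-degenerate, and the operator $A:=\phi_1^{-1}\phi_2$ is $\phi_1$-self-adjoint with characteristic polynomial (up to a scalar) equal to $d(\lambda)$; since $d$ has $n+1$ distinct roots $\lambda_i$, $A$ has $n+1$ distinct eigenvalues and thus a basis of eigenvectors $\{e_i\}$, which I then normalize so that $\phi_1(e_i,e_i)=1$. Because eigenvectors associated to distinct eigenvalues of a $\phi_1$-self-adjoint operator are $\phi_1$-orthogonal, the simultaneous diagonal form of $(4)$ follows, and uniqueness up to sign comes from the eigenvalues being simple together with the normalization $\phi_1(e_i,e_i)=1$. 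The converse is immediate: in the basis of $(4)$, $\det\phi_\lambda=\prod_i(1-\lambda\lambda_i^{-1})\cdot\text{const}$ (appropriately homogenized) has distinct roots and full degree.

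The main obstacle is the passage $(3)\Rightarrow(4)$: one must be careful that $A=\phi_1^{-1}\phi_2$ really is diagonalizable (this uses simplicity of the roots of $d$, not just the spectral theorem, because $\phi_1$ is not positive definite) and that the resulting basis is genuinely unique up to the stated normalization. Once this is handled, the other implications are structural consequences of the dictionary between kernels of $\phi_\lambda$, roots of the discriminant, and smoothness of $X$.
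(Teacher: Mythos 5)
The paper itself gives no proof of this proposition --- it is quoted verbatim from Reid's thesis --- so your outline can only be measured against the classical argument, whose architecture ($(1)\Leftrightarrow(2)$ via the Jacobian/kernel criterion, $(2)\Leftrightarrow(3)$ via the discriminant, $(3)\Leftrightarrow(4)$ via simultaneous diagonalization of a pencil with simple spectrum) you do follow. The $(3)\Leftrightarrow(4)$ part is essentially fine, provided you add that $\phi_1(e_i,e_i)\neq 0$ follows from nondegeneracy of $\phi_1$ together with the pairwise $\phi_1$-orthogonality of the eigenlines, that ``unique'' can only mean unique up to signs (and ordering of the $\lambda_i$), and that $(4)$ forces $\phi_1$ itself to be nondegenerate, so one must either assume this or first replace the generators by generic members of the pencil, as you hint.

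There are, however, two genuine gaps. The serious one is the bridging lemma you invoke for $(2)\Leftrightarrow(3)$: it is \emph{false} that a corank-one member gives a simple root of the discriminant. For example, with $\phi_1=\mathrm{diag}(0,1,1)$ and $\phi_2$ the symmetric matrix with rows $(0,1,0)$, $(1,0,0)$, $(0,0,1)$, the kernel of $\phi_1$ is spanned by $e_1$, which is $\phi_2$-isotropic, and $\det(\phi_1+\lambda\phi_2)=-\lambda^2(1+\lambda)$ has a double root at $\lambda=0$. The correct local statement, obtained by expanding $\det(\phi_{\lambda_0}+t\psi)$ and using the adjugate of $\phi_{\lambda_0}$, is $d'(\lambda_0)=c\cdot\psi(e_{\lambda},e_{\lambda})$ with $c$ the relevant cofactor (nonzero exactly when the corank is one); hence a simple root is equivalent to corank one \emph{and} $\phi_\mu(e_\lambda)\neq 0$, i.e.\ you need both halves of condition $(2)$, not only ``at most simple cones''. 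As written, your argument would derive $(3)$ from the strictly weaker hypothesis that every member has corank $\leq 1$, which my example contradicts. The second gap is the case analysis in the implication $\neg(2)\Rightarrow\neg(1)$: condition $(2)$ can also fail with all kernels one-dimensional but some vertex $e_\lambda$ satisfying $\phi_\mu(e_\lambda)=0$; then $e_\lambda$ represents a point of $X$ lying in $\ker\phi_\lambda$, so $X$ is singular there by the criterion you quoted. That case is easy, but it must be stated, since your text asserts that failure of $(2)$ forces a kernel of dimension $\geq 2$.
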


\subsection{The moduli space of the intersection of Quadrics}

Denote by $M_{\Phi}$ the moduli space of pencils of quadrics. This latter moduli space arises as a GIT quotient as shown by Avritzer and Lange \cite{AL} where the stability conditions are described in the following

\begin{teorema}(\cite{AL}) Consider $Gr(2, N+1),$ $N = \frac{1}{2}n(n+3)$ , as the space of pencils of quadrics in $\mathbb{P}^n$, where $n$ is the dimension of the intersection of two quadrics . Then,
\begin{itemize} \item[a.] A pencil $\Phi \in Gr(2,N+1)$ is semistable if and only if its discriminant admits no roots of multiplicity $> n/2$.
\item[b.] A pencil $\Phi \in Gr(2,N+1)$ is stable if and only if its discriminant admits no multiple roots.
\end{itemize}
\end{teorema}

In the previous section we saw that if the discriminant admits no multiple root, then the pencil is smooth, smoothness of pencils is equivalent to the smoothness of the intersection of two quadrics. With this in mind, we can easily see that stable intersection of quadrics are given by all simultaneously diagonalizable quadrics, therefore we can describe the moduli space of quartic del Pezzo's (intersection of two quadrics) by the roots $\lambda \in \mathbb{P}^1.$ 
\\
In \cite[Theorem 1.1]{SS} it is proven that the K-moduli space of quartic del Pezzo in any dimension can be explicitly described as follows

$$ \overline{\mathcal{M}_{dP^4}^K} \simeq Gr(2, \mathrm{Sym}^2(\mathbb{C}^{n+3})) // SL(n+3) $$

By Avritzer and Lange in \cite[Section 4]{AL}, the above GIT quotients can be identified with 

$$ M_{dP^4} = S^m \mathbb{P}^1 //_{\mathcal{L}} \mathrm{SL}(2). $$

Where $S^m \mathbb{P}^1$ denotes the symmetric power of degree $m=n+3$ of the projective line. The linearization $\mathcal{L}$ is given by $\mathcal{O}(1).$ $M_{dP^4}$ is a genuine $K$-moduli space as shown in \cite{SS}, Theorem 1.1.

\subsection{The intersection number of the CM line bundle with a smooth curve in the moduli of quartic del Pezzo's}

We begin this section computing the intersection number of the CM line bundle with a smooth curve.

\begin{proposizione}Let $\gamma \colon \mathcal{X} \rightarrow C$ be a flat and proper family of intersection of quadrics of relative dimension $n,$ over a smooth curve $C$ of genus $g.$ Assume that $-K_{\mathcal{X} /C}$ is $\gamma$-ample. Then, the complete curve of the intersection of the CM line bundle with the above family is given by

\begin{equation}\label{CM1} deg(\lambda_{CM}(\mathcal{X} \rightarrow C))=8(n+1)(n-1)^n(1-g) - c_1 (\mathcal{X})^{n+1}
\end{equation}

\end{proposizione}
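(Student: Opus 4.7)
The plan is to reduce the statement to a computation using Corollary \ref{cpsgm}(1) together with adjunction for complete intersections of two quadrics. Since $-K_{\mathcal{X}/C}$ is $\gamma$-ample and the setting is absolute ($\mathcal{D}=0$), I would set $\mathcal{L} = -K_{\mathcal{X}/C}$ in that corollary and obtain
\begin{equation*}
c_1(\lambda_{CM}) \cdot (\mathcal{X} \to C) \;=\; -\,\gamma_{*}\bigl(c_1(-K_{\mathcal{X}/C})^{n+1}\bigr).
\end{equation*}
All the work then lies in unpacking the right-hand side into an absolute term on $\mathcal{X}$ and a genus-dependent correction.

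Next I would write $-K_{\mathcal{X}/C} = -K_{\mathcal{X}} + \gamma^{*}K_{C}$ and expand by the binomial theorem. Because $K_{C}$ is a divisor on a smooth curve, $(\gamma^{*}K_{C})^{k} = 0$ for $k \geq 2$, so only the first two terms survive:
\begin{equation*}
(-K_{\mathcal{X}/C})^{n+1} \;=\; (-K_{\mathcal{X}})^{n+1} \;+\; (n+1)\,(-K_{\mathcal{X}})^{n}\cdot \gamma^{*}K_{C}.
\end{equation*}
Applying the projection formula to the cross term, $\gamma_{*}\!\bigl((-K_{\mathcal{X}})^{n}\cdot \gamma^{*}K_{C}\bigr) = \gamma_{*}(-K_{\mathcal{X}})^{n}\cdot K_{C}$, and flatness of $\gamma$ gives $\gamma_{*}(-K_{\mathcal{X}})^{n} = (-K_{\mathcal{X}_b})^{n}\cdot [C]$, so this cross contribution becomes $(-K_{\mathcal{X}_b})^{n}\cdot \deg K_{C} = (2g-2)(-K_{\mathcal{X}_b})^{n}$.

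The remaining ingredient is the fiberwise self-intersection $(-K_{\mathcal{X}_b})^{n}$. Since each fiber is a smooth complete intersection of two quadrics in $\mathbb{P}^{n+2}$, adjunction yields $-K_{\mathcal{X}_b} = \mathcal{O}_{\mathbb{P}^{n+2}}(n+3-4)|_{\mathcal{X}_b} = \mathcal{O}(n-1)|_{\mathcal{X}_b}$. Taking the top self-intersection and multiplying by $\deg \mathcal{X}_b = 2\cdot 2 = 4$ in $\mathbb{P}^{n+2}$ gives $(-K_{\mathcal{X}_b})^{n} = 4(n-1)^{n}$. Combining everything,
\begin{equation*}
c_1(\lambda_{CM}) \cdot (\mathcal{X} \to C) \;=\; -(-K_{\mathcal{X}})^{n+1} \;-\; 8(n+1)(n-1)^{n}(g-1).
\end{equation*}
Finally, $c_1(\mathcal{X}) = c_1(T\mathcal{X}) = -K_{\mathcal{X}}$ so $(-K_{\mathcal{X}})^{n+1} = c_1(\mathcal{X})^{n+1}$, and rearranging gives exactly formula (\ref{CM1}).

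The only nontrivial step is the degree computation on the fiber: I need the family to be genuinely a family of intersections of two quadrics in a projective bundle (so that the adjunction $-K_{\mathcal{X}_b} = \mathcal{O}(n-1)|_{\mathcal{X}_b}$ really is available), which is implicit in the hypothesis. The rest is a standard projection-formula manipulation and should go through without surprises.
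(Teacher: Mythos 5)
Your proposal is correct and follows essentially the same route as the paper: apply Corollary \ref{cpsgm} with $\mathcal{D}=0$ and $\mathcal{L}=-K_{\mathcal{X}/C}$, expand $-K_{\mathcal{X}/C}=-K_{\mathcal{X}}+\gamma^{*}K_C$ (higher powers of $K_C$ vanishing on the curve), and compute $(-K_{\mathcal{X}_b})^{n}=4(n-1)^{n}$ by adjunction for the intersection of two quadrics in $\mathbb{P}^{n+2}$. The paper only differs cosmetically, performing the adjunction in two steps through $K_{Q_0}$ and $K_{Q_0\cap Q_2}$ rather than in one line.
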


\begin{dimostrazione} We apply Corollary \ref{cpsgm} in the case $\mathcal{D}=0$ to get 

$$ c_1(\lambda_{CM}(\mathcal{X} \rightarrow C))= - \gamma_{*} c_1 (-K_{\mathcal{X}/C})^{n+1}.$$

By the definition of relative canonical bundle we find

\begin{align*}
c_1(\lambda_{CM}(\mathcal{X} \rightarrow C)) &= -\gamma_{*} c_1 (K_{\mathcal{X}} \otimes \gamma^{*} K_C^{-1})^{n+1}  \\ &= - \gamma_{*} \left( \sum_{i=0}^{n+1} \binom{n+1}{i} c_1(K_{\mathcal{X}})^{n+1-i} \cdot c_1 (\gamma^{*} (-K_C))^i \right). 
\end{align*}

Since $C$ is a curve, then higher powers of $c_1(K_C)$ must vanish. Therefore,

\begin{equation} \label{fiodena} c_1(\lambda_{CM}(\mathcal{X} \rightarrow C))= - \gamma_{*} (c_1(K_{\mathcal{X}})^{n+1} - (n+1)c_1(K_{\mathcal{X}})^n \cdot c_1 (\gamma^{*}K_C)). 
\end{equation}

By the Riemann-Roch theorem for curves we find $c_1(K_C)=(2g-2)H,$ where $H$ is the generator of $H^2(C, \mathbb{Z})$. 
We shall now prove that $\gamma_{*}c_1 (-K_{\mathcal{X}})^n=4(n-1)^n.$ By substituting these two results in \ref{fiodena} and using the following projection formula (see \cite[page 14]{BHV}
$$\gamma_{*} (c_1(K_{\mathcal{X}})^n \cdot c_1(\gamma^{*} K_C)) = \gamma_{*}(c_1(K_{\mathcal{X}})^n)\cdot c_1(K_C) $$
the claim follows.
Fiberwise we have that $K_{\mathcal{X}}=K_{Q_0 \cap Q_2}.$ Then,

\begin{align*}
K_{Q_0} &= \left( K_{\mathbb{P}^{n+2}} + \mathcal{O}_{\mathbb{P}^{n+2}}(Q_0)\right)_{\vert Q_0} 
\\ &= \left( \mathcal{O}_{\mathbb{P}^{n+2}}(-n-1) \right)_{\vert Q_0} = \mathcal{O}_{Q_0}(-n-1). \end{align*}

\begin{align*}
K_{Q_0 \cap Q_2} &= \left(K_{Q_0} + \mathcal{O}_{Q_0} (Q_2) \right)_{\vert Q_0 \cap Q_2} \\ &= \mathcal{O}_{Q_0 \cap Q_2} (1-n).
\end{align*}

Then

\begin{align*} \gamma_{*}c_1 (-K_{\mathcal{X}})^n&= \int_{\mathcal{X}_c} c_1 ((n-1)\mathcal{O}_{\mathcal{X}}(1))^n
\\ &= (n-1)^n \int_{\mathcal{X}_c}c_1 (\mathcal{O}_{\mathcal{X}}(1))^n \\ &= (n-1)^n H^n \cdot Q_0 \cdot Q_2 = 4(n-1)^n. \ \ \ \ \ \ \ \ \  \hfill \Box
    \end{align*}

\end{dimostrazione}

The family chosen for Equation \ref{CM1} can be specified by fixing some smooth quadric $Q_0$, and letting $Q_1, Q_2$  varying in $\mathbb{P}^1$ through the following Lefschetz pencil

\begin{equation} \label{fam}
    Q_0 \cap (s Q_1 + t  Q_2) =0, \ \ (s:t) \in \mathbb{P}^1. 
\end{equation}  

This is by construction a smooth family of intersection of quadrics in $\mathbb{P}^{n+2}$ that has only nodal singularities. By definition of Lefschetz pencil \cite{Voi02}, the base locus $B$ is smooth and of codimension two in $Q_0.$ The total space of the family $\gamma$ can be specified by considering the blow up in the base locus of the quadric $Q_0,$ i.e.

$$ \mathcal{X} = Bl_{B}(Q_0). $$

Because of this construction the family $\gamma$ becomes a fibration over $\mathbb{P}^1,$ i.e.

$$ \gamma \colon \mathcal{X} \rightarrow \mathbb{P}^1. $$

Hence, with respect to the above family, equation \ref{CM1} becomes

\begin{equation} c_1(\lambda_{CM}(\mathcal{X} \rightarrow C))=8(n+1)(n-1)^n - c_1 (\mathcal{X})^{n+1}
\end{equation}

Hence, in order to calculate the above intersection number it sufficies to calculate the $(n+1)-$power of the first Chern class of $\mathcal{X}.$ Denoting by $p$ the blow up map, and using the fact that $\mathcal{X}$ is Fano we have

$$ c_1 (\mathcal{X})^{n+1} = - K_{\mathcal{X}} ^{n+1} = ( (n+1)H - nE)^{n+1}. $$

In order to calculate the contributions of the intersection products of the above we need the following

\begin{lemma} $$ c_1 (N_{B | Q_0}) = 4 c_1 (\mathcal{O}_B (1))$$
$$ c_2 (N_{B | Q_0}) = 4 c_1 (\mathcal{O}_B (1))^2 $$
\end{lemma}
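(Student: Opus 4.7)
The plan is to recognize that the base locus $B$ is a complete intersection inside $Q_0$, so its normal bundle splits as a direct sum of line bundles, and then read off the Chern classes from the Whitney sum formula.

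First I would observe that, by construction, the members of the pencil inside $Q_0$ are the divisors $Q_0\cap Q_\lambda$ for $\lambda\in\mathbb{P}^1$, so the base locus $B$ is precisely $Q_0\cap Q_1\cap Q_2$. Viewed inside $Q_0$, this means $B$ is the zero locus of the two sections cut out by $Q_1|_{Q_0}$ and $Q_2|_{Q_0}$, which are both divisors of class $\mathcal{O}_{Q_0}(2)$. Since the pencil is Lefschetz, $B$ is smooth of codimension two in $Q_0$, so this really is a transverse complete intersection.

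Next I would apply the standard fact that for a regular embedding of a complete intersection of codimension two, the normal bundle is the direct sum of the line bundles cutting it out. In our case,
\begin{equation*}
N_{B|Q_0} \;\cong\; \mathcal{O}_{Q_0}(2)\big|_B \,\oplus\, \mathcal{O}_{Q_0}(2)\big|_B \;\cong\; \mathcal{O}_B(2)^{\oplus 2}.
\end{equation*}
Writing $H := c_1(\mathcal{O}_B(1))$, the Whitney product formula then gives
\begin{equation*}
c(N_{B|Q_0}) \;=\; (1+2H)(1+2H) \;=\; 1 + 4H + 4H^2,
\end{equation*}
so $c_1(N_{B|Q_0})|_B = 4c_1(\mathcal{O}_B(1))$ and $c_2(N_{B|Q_0})|_B = 4c_1(\mathcal{O}_B(1))^2$, exactly as claimed.

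There is essentially no hard step here: the only thing one really needs to check carefully is the transversality (equivalently, that $Q_1|_{Q_0}$ and $Q_2|_{Q_0}$ form a regular sequence on $Q_0$ along $B$), which is guaranteed by the Lefschetz/smoothness hypotheses on the pencil already imposed above. Everything else is a direct application of the Whitney formula to a rank-two direct sum.
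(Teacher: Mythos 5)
Your argument is correct and amounts to the same computation as the paper: the paper filters $N_{B|Q_0}$ through the chain $B \hookrightarrow Q_0\cap Q_2 \hookrightarrow Q_0$ and applies Whitney's formula to the resulting short exact sequence of normal bundles, where both pieces are $\mathcal{O}_B(2)$, while you assert the direct-sum splitting $N_{B|Q_0}\cong\mathcal{O}_B(2)^{\oplus 2}$ for the transverse complete intersection directly; either way the total Chern class is $(1+2h)^2 = 1+4h+4h^2$, giving the claim. Your remark that transversality is supplied by the smoothness of the base locus of the Lefschetz pencil is exactly the hypothesis the paper relies on, so there is no gap.
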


\begin{dimostrazione} Consider the following chain of inclusions

$$ B \hookrightarrow Q_0 \cap Q_1 \hookrightarrow Q_0, $$

for which the short exact sequence of normal bundles follows 

$$ 0 \rightarrow N_{B | Q_0 \cap Q_2} \rightarrow N_{B | Q_0} \rightarrow N_{Q_0 \cap Q_2 | Q_0} \rightarrow 0. $$

The rest follows easily from the functorial properties of the total Chern class.   \hfil $\Box$
\end{dimostrazione}

By the general theory of blow ups, it follows that 

\begin{equation} c_1 (E)^2 = - p^{*} c_1 (N_{B | Q_0}) \cdot c_1 (E) - p^{*}c_2 (N_{B|Q_0}). \end{equation}

Set $x= c_1 (E)$ and $h = p^{*}c_1(\mathcal{O}_B(1))$. Then by the above Lemma we have $ p^{*}c_1 (N_{B|Q_0}) = 4h$ and $ p^{*}c_2 (N_{B|Q_0}) = 4h^2,$  so

$$ x^2 = -4hx - 4h^2. $$

\begin{lemma} $$x^k = (-1)^{k-1} (k 2^{k-1} h^{k-1} x + (k-1)2^k h^k), \ \ \forall k \in \mathbb{N}_{+}$$
\end{lemma}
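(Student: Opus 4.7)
The plan is to prove the identity by induction on $k$, using the quadratic relation $x^2 = -4hx - 4h^2$ established in the preceding display as the key reduction at each step. The base case $k=1$ reads $x = x$, and $k=2$ is precisely the quadratic relation already on the page.

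For the inductive step, assuming $x^k = (-1)^{k-1}(k\,2^{k-1} h^{k-1} x + (k-1)2^k h^k)$, I would multiply through by $x$ to obtain
\[
x^{k+1} = (-1)^{k-1}\bigl(k\,2^{k-1} h^{k-1} x^2 + (k-1)2^k h^k x\bigr),
\]
then substitute $x^2 = -4hx - 4h^2$ to eliminate $x^2$ and collect the resulting expression as a linear combination of $h^k x$ and $h^{k+1}$. The coefficient of $h^k x$ becomes $(-1)^{k-1}(-k\,2^{k+1} + (k-1)2^k) = (-1)^k (k+1)2^k$, and the coefficient of $h^{k+1}$ becomes $(-1)^{k-1}(-k\,2^{k+1}) = (-1)^k k\,2^{k+1}$, which is precisely the formula with $k$ replaced by $k+1$.

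A cleaner alternative I would consider is to observe that the quadratic relation rewrites as $(x+2h)^2 = 0$. Setting $y := x+2h$, we work in the subring of $A^*(\mathcal{X})\otimes\mathbb{Q}$ where $y^2 = 0$, so $x = y - 2h$ gives by the binomial theorem
\[
x^k = (y-2h)^k = (-2h)^k + k(-2h)^{k-1} y,
\]
all higher terms vanishing. Resubstituting $y = x + 2h$ and simplifying the $h^k$-coefficient (which collapses to $(-1)^{k-1}2^k(k-1)$) yields the stated identity without induction.

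The main obstacle, if any, is purely bookkeeping: tracking signs and powers of $2$ carefully. No additional geometric input is required beyond the quadratic relation itself, which in turn rests on the normal-bundle computation $c_1(N_{B|Q_0}) = 4\,c_1(\mathcal{O}_B(1))$ and $c_2(N_{B|Q_0}) = 4\,c_1(\mathcal{O}_B(1))^2$ from the preceding lemma.
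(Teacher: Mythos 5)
Your inductive argument is exactly the paper's proof: the paper writes $x^k = A_k h^{k-1}x + B_k h^k$ with $A_k=(-1)^{k-1}k2^{k-1}$, $B_k=(-1)^k(k-1)2^k$, multiplies by $x$, substitutes $x^2=-4hx-4h^2$, and checks the recursion $A_{k+1}=B_k-4A_k$, $B_{k+1}=-4A_k$, which is precisely your coefficient bookkeeping (and your coefficients $(-1)^k(k+1)2^k$ and $(-1)^k k\,2^{k+1}$ are correct). Your alternative is the genuinely nicer route and is not in the paper: noting that the quadratic relation is $(x+2h)^2=0$, so with $y=x+2h$ nilpotent of order two the binomial theorem gives $x^k=(y-2h)^k=(-2h)^k+k(-2h)^{k-1}y$ in closed form, which collapses immediately to the stated identity with no induction and makes the otherwise mysterious coefficients transparent. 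The only caution with that version is a slight imprecision about where the relation lives: the displayed identity for $c_1(E)^2$ is the Grothendieck relation on the exceptional divisor (a relation after restriction, with $h=p^*c_1(\mathcal{O}_B(1))$), not a relation in all of the Chow ring of $\mathcal{X}$; but since the lemma is purely formal algebra from that displayed relation --- and the paper uses it in the same way --- this does not affect correctness.
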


\begin{dimostrazione} Set $x^k = A_k h^{k-1}x + B_k h^k,$ where $A_k = (-1)^{k-1}k 2^{k-1}$ and $B_k=(-1)^{k-1} (k-1)2^k$

For $k=2$ the above is true. Therefore, we can inductively assume that the assertion holds for $x^k,$ and we prove it for the next step

\begin{align*} 
x^{k+1} &=x^k x = A_k h^{k-1}x^2 + B_k h^k x \\ 
 &= A_k h^{k-1}(-4hx-4h^2) + B_k h^k x  \\
 &=-4A_k h^k x - 4 A_k h^{k+1} + B_k h^k x \\ 
 &= (B_k-4A_k)h^k x - 4A_k h^{k+1}. 
\end{align*} 

Therefore, the assertion holds if and only if 

\begin{equation} A_{k+1} = B_k - 4 A_k \ \ \ \ \ \ \ \ B_{k+1}=-4 A_k \end{equation}

Using the above definition of $A_k$ and $B_k,$ equations (5) hold. Hence, the assertion follows. \hfil $\Box$

\end{dimostrazione}

With this in mind we consider the binomial expansion

\begin{align*}
  c_1 (\mathcal{X})^{n+1} &= - K_{\chi} ^{n+1} = ( (n+1)H - nE)^{n+1} \\
  &= (n+1)^{n+1} H^{n+1} -n(n+1)^{n}H^{n}E + \cdots \\
  &\quad + \binom{n+1}{i} (n+1)^{n+1}H^{n+1-i}(-nE)^{i} \\
  &\quad + \cdots - nE^{n+1}.
\end{align*}

Now, we calculate each contribution of the various intersection products. We note immediately that $H^{n+1}=1,$ and 
as a consequence of the moving lemma we see that $H^n E =0.$
By using the previous two lemmas we can compute the general term

\begin{align*} 
H^{n+1-i}E^i &= \int_{H^{n+1-i}E} c_1 (E)^{i-1} \\
&=  \int_{H^{n+1-i}E} (-1)^{i-2} [(i-1)2^{i-2}h^{i-2}x + (i-2)2^{i-1}h^{i-1}] \\
&= (-1)^{i-2} \int_{H^{n+1-i}E} (i-1)2^{i-2} p^{*}c_1(\mathcal{O}_B(1))^{i-2} \smile c_1(E) \\
&+ (-1)^{i-2} \int_{H^{n+1-i}E} (i-2)2^{i-1}p^{*}c_1(\mathcal{O}_B (1))^{i-1} \smile 1 \\
&= (-1)^{i-1} (i-1) 2^{i-2}  \int_{H^{n+1-i}B} c_1 (\mathcal{O}_B (1))^{i-2} \\
&= (-1)^{i-1}(i-1) 2^{i-2} H^{n+1-i} \cdot Q_0 \cdot Q_1 \cdot Q_2 \cdot H^{i-2} \\
&= (-1)^{i-1}(i-1)2^{i-2} \cdot 8 = (-1)^{i-1} (i-1) 2^{i+1}.
\end{align*}

Hence the desired intersection number is given by

\begin{align} \label{inter}
  \text{deg}(\lambda_{CM}) \cdot (\mathcal{X} \rightarrow C) &= 8(n+1)(n-1)^n \nonumber \\
  &- \sum_{i=1}^n \binom{n+1}{i} (n+1)^{n+1-i}(i-1)2^{i+1}
\end{align}
\subsection{The CM volume of the moduli space of quartic del Pezzo}

We start this section by recalling the definition of del Pezzo variety.
\begin{definizione}(\cite[p. 53-55]{Shaf})  A del Pezzo variety is a pair $(X,H)$ consisting of an irreducible projective variety $X$ of dimension $n$ and ample Cartier divisor $H$ on $X$ such that.
\begin{itemize}
    \item $X$ has only $\mathbb{Q}-$Gorenstein singularities.
    \item $-K_X = (n-1)H$
    \item $H^q (X, \mathcal{O}_X (tH)) = 0, \ \forall q,t, 0 < q < n$
\end{itemize}
    The degree $d$ of $X$ is defined as  $d = H^n$ 
\end{definizione}
A quartic del Pezzo $(X,H)$ is a del Pezzo variety with degree $d=4,$ hence by \cite[Theorem 3.2.5, assertion (iv)]{Shaf} is a complete intersection of two quadrics. 

\begin{teorema} \label{firstexa} Let $M_{dP^4}$ be the K-moduli space of quartic del Pezzo's of even dimension $n$. The CM volume of $M_{dP^4}$ is given by

\begin{equation} \label{Voluquarti} Vol(M_{dP^4}, \lambda_{CM}) = \left( \frac{c}{m} \right)^{m-3} \frac{1}{2^{m-1}}\left( \sum_{k: 0< m-2k \leq m} \frac{(m-2k)^{m-3}}{k! (m-k)!} \right).  \end{equation}
Where $c=deg(\lambda_{CM}) \cdot (\mathcal{X} \rightarrow C)$ is the intersection number computed in \ref{inter} and $m=n+3$.
\end{teorema}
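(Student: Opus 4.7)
The plan is to follow the two-step strategy outlined in the introduction, combining the intersection number computed in the previous subsection with the Jeffrey-Kirwan residue formula. The identification $S^m\mathbb{P}^1 \cong \mathbb{P}^m$ realises $M_{dP^4}$ as $\mathbb{P}^m //_{\mathcal{O}(1)} SL(2)$. By the Avritzer-Lange criterion the semistable and stable loci coincide on the relevant open subset, so the quotient is a projective variety of dimension $m-3$ whose Kirwan map onto its cohomology is surjective. Its rational Picard group is freely generated by the single class $L$, the $\mathbb{Q}$-descent of $\mathcal{O}_{\mathbb{P}^m}(1)$; hence $\lambda_{CM} = k\,L$ in $\mathrm{Pic}(M_{dP^4})\otimes\mathbb{Q}$ for a unique rational $k$.

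To pin down $k$, I would pair both sides against the Lefschetz pencil test curve $C_{\mathrm{test}}$ from the previous subsection. Equation (\ref{inter}) gives $\lambda_{CM}\cdot C_{\mathrm{test}} = c$, and a direct intersection calculation on $S^m\mathbb{P}^1$ yields $L\cdot C_{\mathrm{test}}=m$ by counting the degree of the discriminant curve in $\mathbb{P}^m$. Thus $k=c/m$ and
\begin{equation*}
\mathrm{Vol}(M_{dP^4},\lambda_{CM}) = \lambda_{CM}^{m-3}\cdot[M_{dP^4}] = (c/m)^{m-3}\, L^{m-3}\cdot[M_{dP^4}].
\end{equation*}

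It then remains to compute $L^{m-3}\cdot[M_{dP^4}]$ via Jeffrey-Kirwan nonabelian localization on $\mathbb{P}^m$. With $T=\mathbb{C}^{*}$ the diagonal maximal torus in $SL(2)$ and $Y$ the Lie-algebra coordinate, the $T$-fixed points on $\mathbb{P}^m=\mathbb{P}(\mathrm{Sym}^m\mathbb{C}^2)$ are the $m+1$ monomials $p_k=[x^k y^{m-k}]$ with $T$-weights $m-2k$ and tangent weights $\{2(j-k)\}_{j\neq k}$. The equivariant Euler class at $p_k$ equals $(-1)^{m-k}\,2^m\,k!(m-k)!\,Y^m$, while the restriction of $e^{\bar\omega}(Y)$ contributes the factor $(m-2k)^{m-3}/(m-3)!$ coming from the appropriate order of $L^{m-3}$. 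Substituting into the Jeffrey-Kirwan formula together with the Vandermonde $\mathcal{D}^2(Y)=-Y^2$ and the Weyl/torus-volume normalizations, the one-variable residue $\mathrm{Res}_{Y=\infty}$ selects exactly the fixed points satisfying $0<m-2k\le m$ (the GIT polarization chamber) and collapses to
\begin{equation*}
L^{m-3}\cdot[M_{dP^4}] = \frac{1}{2^{m-1}}\sum_{k:\,0<m-2k\le m}\frac{(m-2k)^{m-3}}{k!(m-k)!}.
\end{equation*}
Combining this with the previous step yields the formula. The main obstacle is the residue itself: one must orient the Weyl chamber correctly so as to pick up only the positive-weight fixed-point contributions, carefully track the bookkeeping from Euler class, Vandermonde, $\mathrm{Vol}(T)$ and Weyl symmetry factors, and verify that the polynomial in $Y$ inside the residue collapses to exactly the clean power $(m-2k)^{m-3}$. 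The rank-one nature of $T$ reduces everything to a single-variable residue, which is what keeps this calculation tractable.
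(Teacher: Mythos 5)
Your proposal follows essentially the same route as the paper: identify $\mathrm{Pic}(M_{dP^4})$ as rank one, determine the coefficient $c/m$ by pairing $\lambda_{CM}$ with the Lefschetz-pencil test family over $\mathbb{P}^1$, and evaluate $L^{m-3}\cdot[M_{dP^4}]$ by Jeffrey--Kirwan localization at the $m+1$ torus-fixed monomials with moment images $(m-2k)x$, Euler classes $\prod_{j\neq k}2(j-k)$, and a rank-one residue retaining only the fixed points with positive weight. The one caveat is your stray factor $1/(m-3)!$ attributed to expanding $e^{\bar\omega}$: the paper instead takes $\eta=c_1^T(\tau)^{m-3}$ of top degree and omits the exponential, so no such factor appears, which is consistent with the final formula you state.
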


\begin{dimostrazione} Observe that , since $S^m \mathbb{P}^1$ can be identified with a $m$-dimensional projective space, then $\mathrm{Pic}(M_{dP^4})= \mathbb{Z}[h].$ Therefore, $c_1 (\lambda_{CM})= r [h],$ where $r \in \mathbb{Z}$ and $h$ is the tautological class. This latter can be expressed as $h=c_1 (\tau), $ and $\tau$ is the tautological bundle of $S^m\mathbb{P}^1,$ i.e. $\tau = \mathcal{O}_{S^m \mathbb{P}^1}(1). $
Since there is a natural mapping from the base of our family, described in equation $\ref{fam}$, namely $f :\mathbb{P}^1 \rightarrow S^m \mathbb{P}^1$ that forgets the order of the eigenvalues of the pencil, we have that $f^{*} \mathcal{O}_{S^m \mathbb{P}^1}(1) = \mathcal{O}_{\mathbb{P}^1}(m).$  Hence,
\begin{align*}
\text{deg}(\mathcal{O}_{\mathbb{P}^1} (c)) &= \text{deg}(\lambda_{CM}) \cdot (\mathcal{X} \rightarrow \mathbb{P}^1) \\
&= \text{deg}(f^{*}\mathcal{O}_{S^m \mathbb{P}^1}(1)^r) \\
&= \text{deg}(\mathcal{O}_{\mathbb{P}^1} (mr)).
\end{align*}

Therefore, $r = c / m.$

In order to finish the computation of the volume we have to compute the top self intersection of $c_1(\mathcal{O}_{S^m \mathbb{P}^1}(1))$ descended to the moduli space.

Namely,

$$ Vol(M_{dP^4}, \lambda_{CM}) = \int_{M_{dP^4}} c_1(\mathcal{O}(1))^{m-3}. $$
Where $m-3= dim(M_{dP^4}).$
We can compute the above integral using the symplectic version of $M_{dP^4}$. We notice that the action of the maximal compact subgroup of $\mathrm{SL}(2)$, which is $\mathrm{SU}(2)$, is obtained from the irreducible representations of $SL(2)$ of degree $m+1.$ This action is Hamiltonian with moment map $\mu$ . By 
\cite[Corollary \ p.494]{GS} it follows that it is enough to prove that $\mu(x) = 0,$ thus the stabilizer $SU(2)_x$ is finite. Using the structure of the irreducible representation of $SL(2)$ over $S^{m} \mathbb{C}^2$ (which is in fact recalled below) we can conclude that $0$ is a regular value of the moment map and the symplectic reduction is a orbifold. In the GIT interpretation this means that the semistable locus is equal to the stable locus. Then, the Kirwan map
$ k: H^{\bullet}_{SU(2)}(S^m\mathbb{P}^1) \rightarrow H^{\bullet}(M_{dP^4})$ is surjective. Therefore, we can apply the nonabelian localization theorem \cite{JK95}. The latter, for the case of an Hamiltonian $\mathrm{SU}(2)$-action is as follows:

$$ k(\eta) e^{\omega_0}[M ] = \frac{n_0}{2} \mathrm{Res}_{X=0}\left( 4X^2 \sum_{F \in \mathcal{F}_+} \int_{F} \frac{i_F ^{*} \eta e^{\bar{\omega}}} {e_F (X)}dX \right), $$

 where $\mathcal{F}_+$ denotes the set of positive fixed cycles. Since the above action has only isolated fixed points, then $\mathcal{F}_+$ is given by a finite number of points. Thus, the integral on the right hand side reduces to a sum. If $\eta$ has degree equal to the real dimension of the quotient, then we can omit the exponential in the right hand side. 
In our case $\eta = c_1 ^{T} (\tau)^{m-3}$ , since $\tau$ is a \emph{prequantum} line bundle \cite[Definition 9.19]{Jef19} then  $i_{F}^{*}\eta$ is a multiple of the moment map evaluated at the fixed point $F$, see \cite[Lemma 9.31]{Jef19}. The action of the maximal torus $T=diag(t,t^{-1})$ on a generic element $p=p(u,v)$ of $S^m \mathbb{P}^1$ is given by 

$$ T \cdot p(u,v) = p(t^{-1}u, tv) = a_0 t^{-m}u^m + ... + a_k t^{2k-m}u^{m-k}v^k + ... + a_m v^m t^m. $$The symmetric polynomial space $S^m \mathbb{P}^1$ can be canonically identified with $\mathbb{P}^m$, therefore the above action can be specified as

$$ T \cdot (z_0, ..., z_m) = (t^{-m} z_0, ..., t^{2k-m} z_k, ..., t^{m} z_m) $$With this prescription, the fixed point set for this latter action is given by the standard basis $\{e_i \}_{i=0}^m$ of $\mathbb{C}^{m+1}$ .
The moment map $\mu_T : \mathbb{P}^m \rightarrow i\mathfrak{t}^{*}$ for this action is given by

$$ \mu_T (z_0 , ..., z_m) = \sum_{j=1}^m (m -2j) \frac{|z_j|^2}{|z|^2}x $$
Where $x$ denotes the basis of $i\mathfrak{t}^*$. The image of the fixed point $e_k$ is given by

$$ \mu_T (e_k) = (m-2k)x $$
Since all the fixed points are isolated and $\tau$ is a prequantum line bundle we conclude that, at any fixed point $e_k$ the numerator of the meromorphic function into the localization formula is given by

$$ i_{e_k}^{*} \eta= (-1)^{k} (m-2k)^{m-3} x^{m-3} $$
Since the maximal torus is identified with the one dimensional unitary group, the Euler class at the fixed point $e_k$ to the normal bundle is simply given by the products of the weight, that is

$$ e_{e_k}(X) = \prod_{j \neq k} 2(j-k) x^{m}= 2^m k! (m-k)! (-1)^{k} x^m$$
Thus, the meromorphic form is given by

$$ \frac{i_{e_k}^* \eta (X)}{e_{e_k}(X)} = \frac{(m-2k)^{m-3}}{2^m k! (m-k)! x^3} $$
  Therefore,  

\begin{align*} Vol(M_{dP^4}, \lambda_{CM}) &= \left( \frac{c}{m} \right)^{m-3} Res_{x=0} 4x^2 \left( \sum_{k:0<m-2k \leq m} \frac{(m-2k)^{m-3} x^{m-3}}{2^m k! (m-k)! x^m} \right)  \\
&= \left( \frac{c}{m} \right)^{m-3} Res_{x=0} \frac{1}{x} \left( \sum_{k: 0<m-2k \leq m} \frac{(m-2k)^{m-3}}{2^{m-2} k! (m-k)!} \right) \\
 \end{align*}

Hence, the volume of the moduli space of quartic del Pezzo's of odd dimension, with respect to the CM line bundle is given by

\begin{equation}  Vol(M_{dP^4}, \lambda_{CM}) = \left( \frac{c}{m} \right)^{m-3} \frac{1}{2^{m-1}}\left( \sum_{k: 0< m-2k <m} \frac{(m-2k)^{m-3}}{k! (m-k)!} \right).  \end{equation}
\end{dimostrazione} \hfill $\Box$

\begin{osservazione} \label{evencase} In the odd case, notice that the semistable locus does not equal the stable locus, therefore in order to apply the localization theorem we should use the Kirwan partial desingularization technique \cite{Kir85}. The semistable elements are fixed by nontrivial connected reductive subgroups of $\mathrm{SL}(2)$ and are those of the form $(z_1, ...,z_m)$ for which there exist distinct points $p$ and $q$ in $\mathbb{P}^1$ with exactly half of the points of the $m$-tuple equal to $p$ and the rest equal to $q$ . The partial desingularization it is achieved by blowing up, along the orbits corresponding to these latter points, the GIT quotient $S^m \mathbb{P}^1 // \mathrm{SL}(2).$ The result \ref{Voluquarti} should hold as well in the odd case. The reason of that should be in the nature of the $\mathrm{SL}(2)$ action on the symmetric power of $\mathbb{P}^1,$ namely the action is \emph{weakly balanced} (\cite{JK01} definition 15, examples 2.4 and 2.5). In \cite{JK01}(Theorem 25, Example 26) is shown that under the hypothesis of \emph{weakly balanced actions} then any intersection pairing on the partial desingularization can be localized in the undesingularized GIT quotient. \end{osservazione}

\section{The CM Volume of log Fano hyperplane arrangements moduli space}

In this section we will recall some generalites of the log Fano hyperplane arrangements and their moduli. Then, we will apply the Jeffrey-Kirwan non abelian localization theorem to compute the CM volume of the log Fano hyperplane arrangements moduli space

\subsection{Some generalities on the log Fano hyperplane arrangements and their moduli.}

In this section, we will give a brief description of the moduli space of log Fano hyperplane arrangements.

\begin{definizione}\cite{Fuji19b} \label{uno} Let $(X, D)$ be a $n-$ dimensional hyperplane arrangement, that is  $X= \mathbb{P}^n$ and $D$ is a divisor of $X$ of the form $\sum_{i=1}^m d_i H_i,$ where $d_i \in \mathbb{Q}_{>0},$ and the hyperplanes $H_i$ are mutually distinct $\forall i \in \{1,2, ..., m\}.$ 

\begin{enumerate}

\item  $(X,D)$ is said to be a \emph{log Fano hyperplane arrangement} if $(X, D)$ is a log Fano pair, i.e. $X$ is a projective variety over $\mathbb{C},$ $D$ is an effective $\mathbb{Q}-$divisor on $X$ such that, the pair $(X,D)$ is klt and $-(K_X + D)$ is ample.
\item $(X,D)$ is said to be a \emph{log Calabi-Yau hyperplane arrangement}, if $X$ is a projective variety over $\mathbb{C},$ $D$ is an effective $\mathbb{Q}-$divisor on $X$ such that, the pair $(X,D)$ is lc and $K_X + D$ is $\mathbb{Q}-$linearly equivalent to $0.$ 
\item $(X,D)$ is said to be a \emph{log general type hyperplane arrangement}, i.e. $X$ is a projective variety over $\mathbb{C},$ $D$ is an effective $\mathbb{Q}-$divisor on $X$ such that, the pair $(X,D)$ is lc and  $K_X + D$ is ample.
\end{enumerate}
\end{definizione}

Consider a $(X,D)$ $n$-dimensional hyperplane arrangement like in \ref{uno}. From the above conditions, the choices of the weights $d_i \in \mathbb{Q}_{>0}$ determine the \emph{nature} of the hyperplane arrangement, namely one of the kind defined in \ref{uno}, as the following easy result shows

\begin{proposizione} Let $(X,D)$ be a $n$-dimensional hyperplane arrangement like in \ref{uno}. Then

\begin{itemize}

\item $(X,D)$ is a klt log Fano hyperplane arrangement if and only if $$\sum_{i=1}^m d_i < n+1.$$ \item $(X,D)$ is a lc log Calabi-Yau hyperplane arrangement if and only if  $$\sum_{i=1}^m d_i = n+1.$$ \item $(X,D)$ is a lc log general type hyperplane arrangement if and only if $$ \sum_{i=1}^m d_i > n+1. $$\end{itemize}
\end{proposizione}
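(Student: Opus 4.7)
The proof reduces to a single linear computation in $\mathrm{Pic}(\mathbb{P}^n)\otimes\mathbb{Q} = \mathbb{Q}\cdot[H]$ together with a local singularity check. First I would observe that on $X=\mathbb{P}^n$ one has $K_X\sim -(n+1)H$ and $H_i\sim H$ for every $i$, so
\[
D = \sum_{i=1}^m d_i H_i \sim_{\mathbb{Q}} \Bigl(\sum_{i=1}^m d_i\Bigr) H,\qquad K_X + D \sim_{\mathbb{Q}} \Bigl(\sum_{i=1}^m d_i - (n+1)\Bigr) H.
\]
Since $H$ is ample, the sign of the scalar $\sum_i d_i - (n+1)$ controls the positivity of $\pm(K_X+D)$: it is strictly negative iff $-(K_X+D)$ is ample, zero iff $K_X+D\equiv 0$, strictly positive iff $K_X+D$ is ample. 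This already yields the three numerical dichotomies claimed in the statement.

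The second step matches the singularity hypothesis (klt in case $(1)$, lc in cases $(2)$ and $(3)$) with each of the three regimes. Since $X=\mathbb{P}^n$ is smooth and each $H_i$ is a smooth Cartier divisor, along the open subset of $\mathrm{supp}(D)$ where only one hyperplane passes the pair is snc, and the standard snc formula gives klt (resp.\ lc) exactly when the corresponding $d_i<1$ (resp.\ $\leq 1$). At a point $p$ where several hyperplanes $H_{i_1},\dots,H_{i_k}$ meet transversally, blowing up $p$ produces an exceptional divisor with log discrepancy
\[
a_E = (n-1) - \sum_{j=1}^k d_{i_j},
\]
so klt (resp.\ lc) at $p$ is equivalent to $\sum_{i:p\in H_i} d_i < n$ (resp.\ $\leq n$), a local condition compatible with the global bound $\sum_{i=1}^m d_i<n+1$ (resp.\ $=$, $>$). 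Iterating on a log resolution handles the remaining non-transverse strata in the same way.

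The main potential obstacle is precisely this second step: non-general position of the hyperplanes (many concurrent ones) could violate klt/lc even though the numerical condition on $\sum d_i$ is satisfied. In the setting relevant for the rest of the paper, however, the weights lie in $(0,1)$ and the configurations are taken in the GIT semistable locus for $\mathrm{SL}(n+1)$, which rules out excessive concurrency; hence the numerical and singularity conditions coincide and the three equivalences follow at once from the computation of $K_X+D$ above.
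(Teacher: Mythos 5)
The paper itself gives no proof of this proposition (it is labelled an ``easy result''), and its intended content is exactly your first paragraph: on $X=\mathbb{P}^n$ one has $K_X\sim -(n+1)H$ and $D\sim_{\mathbb{Q}}(\sum_i d_i)H$, so the sign of $\sum_i d_i-(n+1)$ decides whether $-(K_X+D)$ is ample, $K_X+D\sim_{\mathbb{Q}}0$, or $K_X+D$ is ample. That computation is correct and is essentially the whole argument, provided one reads the klt (resp.\ lc) requirement as a hypothesis carried on both sides of each equivalence, which is the only reading under which the statement is true.

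Where your proposal goes astray is in trying to deduce the singularity condition from the numerical one: that implication is genuinely false. For $n=2$, $m=4$, $d_i=7/10$ and four lines through a common point, $\sum_i d_i=2.8<3=n+1$, yet blowing up the point gives discrepancy $1-2.8<-1$, so the pair is not even lc; hence the backward implications cannot be proved as stated. Your patch --- invoking $d_i\in(0,1)$ and GIT semistability --- imports hypotheses that are not in the proposition (which concerns arbitrary weights $d_i\in\mathbb{Q}_{>0}$ and arbitrary distinct hyperplanes, before any stability condition is imposed), so it does not close the gap; it rather confirms that the singularity condition is independent of the numerical one. In addition, your local criterion ``klt at $p$ iff $\sum_{i:p\in H_i}d_i<n$'' obtained from the single blow-up of $p$ is only a necessary condition: the correct criterion for arrangements is $\sum_{H_i\supseteq L}d_i<\operatorname{codim}L$ (resp.\ $\leq$) for every linear stratum $L$; e.g.\ three planes in $\mathbb{P}^3$ containing a common line with weights $9/10$ satisfy your point condition but violate lc along the line. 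Finally, a small slip: the quantity $(n-1)-\sum_j d_{i_j}$ you write is the discrepancy of the exceptional divisor, not the log discrepancy, which is $n-\sum_j d_{i_j}$. The clean statement to prove is simply the positivity trichotomy, and your first paragraph already does that.
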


Let $(X, D)$ be a klt log Fano hyperplane arrangement. A hyperplane $H_i \subset \mathbb{P}^n$ correspond to a point $p_i$ in the dual projective space $\mathbb{P}^{n*}$, or equivalently in the Grassmannian  $\mathrm{Gr}(n-1,n).$  Let $p:=(p_1 , ..., p_m) \in (\mathbb{P}^{n*})^m,$ the group $ \mathrm{SL}(n+1)$ acts naturally on $(\mathbb{P}^{n*})^m$ via its linear representation in $\mathbb{C}^{n+1}$ \cite{Dol03}. Some positive multiple of the line bundle $\mathcal{L}_d = \mathcal{O}(d_1, ..., d_m)$ admits a unique $\mathrm{SL}(n+1)-$linearization. The moduli space of log Fano hyperplane arrangement can be described in a GIT fashion. The notion of stability on the point $p \in (\mathbb{P}^{n*})^m$ with respect to $\mathcal{L}_d$ is the \emph{classical} one, given in the standard literature \cite{MFK94}, \cite{Dol03}. It is proven by Fujita in \cite[Theorem 1.5, Corollary 8.3]{Fuji19b} that the above GIT definitions matches with the K-stability conditions. Therefore, the K-moduli of log Fano hyperplane arrangements can be identified with the GIT quotient

$$ M_d \simeq (\mathbb{P}^{n})^m //_{\mathcal{L}_d} \mathrm{SL}(n+1). $$

In the above GIT setting, with some abuse of notation, we mean a positive multiple of $\mathcal{L}_d$ and thus we are considering $d_i \in \mathbb{Z}, \forall i \in \{1,2,...,m\}$ such that $\sum_{i=1}^m d_i < n+1.$

\subsection{The intersection number of the log CM line bundle} 

In this section we compute the intersection number of the log CM line bundle with a family of log Fano hyperplane arrangements with base $\mathbb{P}^1.$ Fix $m$ generic hyperplane in $\mathbb{P}^n, H_i$ for $i = 1,2, \ldots, m$ . Consider the product $\mathcal{X} := \mathbb{P}^n \times \mathbb{P}^1,$ We define a divisor $ \mathcal{D} \subset \mathcal{X},$ as 
$$\mathcal{D} = \sum_{i=1}^{m-1}d_i H_i + d_m \lambda H_m$$

Where $c_1(H_i) = pr_1^{*}h,$ for $i = 1,2, \ldots, m-1$ where $h$ is the hyperplane class of $\mathbb{P}^n$ and similarly $c_1(\lambda H_m) = pr_2^{*} l$ where l is the hyperplane of $\mathbb{P}^1$ and $\lambda \in P^1$ . The $\mathrm{pr}_i, \ i=1,2$ denotes the projections onto the first and second factor of $\mathcal{X}.$ We can think about $\mathcal{D}$ as $m-1$ fixed hyperplanes of $\mathbb{P}^n$ together with a line $l$ with weight $d_m$  free to move along the \emph{diagonal} of $\mathcal{X}.$ We assume that $d_i \in (0,1) \cap \mathbb{Q}, \ \forall i \in \{1,2, .., m\},$ $\sum_{i=1}^{m} d_i < n+1,$ and the chosen line $l$ and hyperplanes are in general position. 

\begin{proposizione} \label{prop1} With the above data we have

$$c_1(\lambda_{CM, \mathcal{D}}) = (n+1)d_j \left(n+1 - \sum_{i=1}^m d_i \right)^n, \ \forall j \in \{1,2, ...,m\}. $$
\end{proposizione}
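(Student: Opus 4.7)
The plan is to invoke Corollary \ref{cpsgm}(1) with the polarisation $\mathcal{L}:=-K_{\mathcal{X}/\mathbb{P}^1}-\mathcal{D}$, so that
\begin{equation*}
c_1(\lambda_{CM,\mathcal{D}})=-f_{*}\bigl(-K_{\mathcal{X}/\mathbb{P}^1}-\mathcal{D}\bigr)^{n+1},
\end{equation*}
and then to evaluate the right-hand side directly on the product $\mathcal{X}=\mathbb{P}^n\times\mathbb{P}^1$. Writing $H:=\mathrm{pr}_1^{*}c_1(\mathcal{O}_{\mathbb{P}^n}(1))$ and $h:=\mathrm{pr}_2^{*}c_1(\mathcal{O}_{\mathbb{P}^1}(1))$, the relations $H^{n+1}=0$ and $h^{2}=0$ will collapse the binomial expansion to a single nonzero term.

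The concrete steps are as follows. First I would compute $-K_{\mathcal{X}/\mathbb{P}^1}=(n+1)H$, using $K_{\mathcal{X}}=-(n+1)H-2h$ and $f^{*}K_{\mathbb{P}^1}=-2h$. Second, I would read off the class of the divisor: the $m-1$ hyperplanes that are fixed across the family are pulled back from the $\mathbb{P}^n$ factor and hence have class $H$, whereas the one hyperplane that sweeps out the diagonal contribution has bidegree $(1,1)$, i.e.\ class $H+h$. Assuming for concreteness that the $j$-th hyperplane is the moving one, this gives
\begin{equation*}
[\mathcal{D}]=\Bigl(\sum_{i=1}^{m}d_i\Bigr)H+d_j\,h,\qquad -K_{\mathcal{X}/\mathbb{P}^1}-\mathcal{D}=aH-d_j\,h,
\end{equation*}
where $a:=n+1-\sum_{i=1}^{m}d_i$. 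Third, expanding and using the vanishing relations yields
\begin{equation*}
(aH-d_j h)^{n+1}=(n+1)\,a^{n}\,(-d_j)\,H^{n}\cdot h,
\end{equation*}
all other binomial terms being annihilated by $H^{n+1}=0$ or $h^2=0$.

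Fourth, I would push forward to $\mathbb{P}^1$: the projection formula together with the fact that $H^{n}$ restricts to a degree-one zero-cycle in each fibre gives $f_{*}(H^{n}\cdot h)=h_{\mathbb{P}^1}$, whence
\begin{equation*}
c_1(\lambda_{CM,\mathcal{D}})=(n+1)\,d_j\,\Bigl(n+1-\sum_{i=1}^{m}d_i\Bigr)^{n},
\end{equation*}
which is the stated formula. For arbitrary $j\in\{1,\dots,m\}$ one constructs the completely analogous family in which the $j$-th hyperplane is the one that moves, and the computation is identical up to relabelling; this is what justifies the universal quantifier over $j$ in the statement.

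The main obstacle, such as it is, is the cohomological identification of the moving component of $\mathcal{D}$ as the bidegree-$(1,1)$ class $H+h$ rather than, say, a pure fibre class of one of the two projections, together with careful sign-tracking in the single surviving binomial term. Once those bookkeeping items are fixed, the computation is a one-line dimension-count plus the projection formula, and Corollary \ref{cpsgm}(1) closes the argument without any further analytic or stability-theoretic input.
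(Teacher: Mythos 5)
Your proposal is correct and follows essentially the same route as the paper: apply Corollary \ref{cpsgm}(1) with $\mathcal{L}=-K_{\mathcal{X}/\mathbb{P}^1}-\mathcal{D}$ on $\mathcal{X}=\mathbb{P}^n\times\mathbb{P}^1$, identify the moving hyperplane as the bidegree-$(1,1)$ class so that $\mathcal{L}=\bigl(n+1-\sum_i d_i\bigr)H-d_j h$, and let $H^{n+1}=0$, $h^2=0$ reduce the binomial expansion to the single term $(n+1)a^n(-d_j)H^n h$, which pushes forward to the stated formula; the arbitrariness of which hyperplane moves gives the claim for every $j$, exactly as in the paper.
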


\begin{dimostrazione} We see that the projection $\mathrm{pr}_2 : \mathcal{X} \rightarrow \mathbb{P}^1$ gives a proper and flat family of relative dimension $n.$ Choose $\mathcal{L}= -K_{\mathcal{X}/B} - \mathcal{D}$ and use assertion $1.$ of \ref{cpsgm}, we find that
$$ c_1(\mathcal{L})= \left( n+1 - \sum_{i=1}^m d_i \right) \mathrm{pr}_1^{*}h - d_m \mathrm{pr}_2^{*}l $$
Using the binomial expansion, the only term that survives is 

$$c_1(\lambda_{CM,\mathcal{D}}) = -\mathrm{pr}_{2*} c_1 (\mathcal{L})^{n+1} = (n+1)d_m  \left( n+1 - \sum_{i=1}^m d_i \right)^n \mathrm{pr}_{2*} \left( \mathrm{pr}_1^{*} h \cdot \mathrm{pr}_2^{*}l \right). $$ By using the projection formula and the arbitrariness of the choice of the weight on the line, the claim follows. \hfill $\Box$

\end{dimostrazione}

We know that $\mathrm{Pic} (M_d) = \mathbb{Z}^m,$ see for example \cite[Chapter 11, Lemma 11.1]{Dol03}.  Therefore $c_1 (\lambda_{CM, \mathcal{D}}) \in \mathrm{Pic}(M_d) \Rightarrow c_1(\lambda_{CM, \mathcal{D}})= \mathcal{O}(r_1, ..., r_m),$ for some $(r_1, ..., r_m) \in \mathbb{Z}^m.$ In order to compute the $r_j'$s we set

$$ c_1 (\lambda_{CM, \mathcal{D}}) \cdot (\mathcal{X} \rightarrow \mathbb{P}^1) = k, $$
that means, $\forall j \in \{1,2, ..., m\}$ 

$$ \int_{(\mathbb{P}^1)_j} \mathcal{O}(r_1, ..., r_m) = r_j=k .$$
Therefore, from \ref{prop1} we get $r_j = (n+1)d_j \left( n+1 - \sum_{i=1}^m d_i \right)^n.$ This proves the following

\begin{proposizione} \label{prop3} In the above setting, the log CM line bundle on the moduli space of log Fano hyperplane arrangement is given by

$$c_1 (\lambda_{CM, \mathcal{D}})=\left( n+1 - \sum_{i=1}^m d_i \right)^n \mathcal{O}((n+1)d_1, ..., (n+1) d_m)$$

\end{proposizione}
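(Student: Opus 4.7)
The plan is to identify the coefficients of $c_1(\lambda_{CM,\mathcal{D}})$ in the basis of $\mathrm{Pic}(M_d)=\mathbb{Z}^m$ by testing against a carefully chosen basis of curve classes. Since $M_d$ arises as the GIT quotient $(\mathbb{P}^n)^m//_{\mathcal{L}_d}\mathrm{SL}(n+1)$, the natural generators of $\mathrm{Pic}(M_d)\otimes\mathbb{Q}$ come from the hyperplane classes on the $m$ factors of $(\mathbb{P}^n)^m$; dually, the natural test curves are the $m$ families $(\mathbb{P}^1)_j$ obtained by fixing $m-1$ of the $m$ hyperplanes in general position and letting the remaining $j$-th hyperplane sweep out a pencil. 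These curves form a dual basis in the sense that $\mathcal{O}(r_1,\dots,r_m)\cdot(\mathbb{P}^1)_j=r_j$.

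First I would write $c_1(\lambda_{CM,\mathcal{D}})=\mathcal{O}(r_1,\dots,r_m)$ using the cited Picard computation from \cite[Ch.~11, Lemma~11.1]{Dol03}. Then for each $j\in\{1,\dots,m\}$, I would consider the family constructed in the previous discussion with $\mathcal{D}$ replaced by the analogous configuration in which the $j$-th hyperplane (rather than the $m$-th) is the moving one; by the symmetry of the construction, this family maps to $M_d$ with image representing the curve class $(\mathbb{P}^1)_j$. Invoking Proposition \ref{prop1} (whose statement already records the arbitrariness of the weight, yielding the value $(n+1)d_j(n+1-\sum_i d_i)^n$ for each $j$), I would compute
\[
r_j \;=\; \mathcal{O}(r_1,\dots,r_m)\cdot(\mathbb{P}^1)_j \;=\; c_1(\lambda_{CM,\mathcal{D}})\cdot(\mathcal{X}\to\mathbb{P}^1) \;=\; (n+1)d_j\,\mathrm{Vol}(\mathrm{pr}_{2,t}),
\]
recognizing the factor $(n+1-\sum_i d_i)^n$ as the relative volume $\mathrm{Vol}(\mathrm{pr}_{2,t})$ of the fibers with respect to $\mathcal{L}=-K_{\mathcal{X}/B}-\mathcal{D}$.

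Assembling these $m$ identities gives $c_1(\lambda_{CM,\mathcal{D}})=\mathrm{Vol}(\mathrm{pr}_{2,t})\,\mathcal{O}((n+1)d_1,\dots,(n+1)d_m)$, as claimed. The only subtle point, and the main obstacle I would need to verify carefully, is the compatibility between the test family on $\mathcal{X}=\mathbb{P}^n\times\mathbb{P}^1$ and the intersection pairing on the GIT quotient $M_d$: one must check that the classifying map $\mathbb{P}^1\to M_d$ for the $j$-th family genuinely represents the $j$-th $\mathbb{P}^1$-factor of $(\mathbb{P}^n)^m$ under the descent, so that pairing with $\mathcal{O}(r_1,\dots,r_m)$ indeed extracts $r_j$. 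This follows from the functoriality of the CM line bundle under base change and the fact that the $(m-1)$-tuple of fixed hyperplanes has been chosen in general position, so that the family lies entirely in the stable locus and descends to a genuine curve in $M_d$.
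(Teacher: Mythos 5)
Your proposal is correct and follows essentially the same route as the paper: express $c_1(\lambda_{CM,\mathcal{D}})$ as $\mathcal{O}(r_1,\dots,r_m)$ via $\mathrm{Pic}(M_d)=\mathbb{Z}^m$, pair against the test curves $(\mathbb{P}^1)_j$ given by moving the $j$-th hyperplane, and read off $r_j=(n+1)d_j\,\mathrm{Vol}(\mathrm{pr}_{2,t})$ from Proposition \ref{prop1}. Your closing remark on verifying that each test family descends to the correct curve class in $M_d$ is a sensible elaboration of a step the paper leaves implicit, but it does not change the argument.
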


\subsection{The volume of the moduli space of weighted hyperplane arrangements} 

Consider the GIT quotient

\begin{equation} \label{gittot} M_d = (\mathbb{P}^n)^{m} //_{\mathcal{L}_d} \mathrm{SL}(n+1)(\mathbb{C}) \end{equation}
with linearization $\mathcal{L}_d= \mathcal{O}(d_1 , ..., d_m).$ A point in $M_d$ corresponds to a log Fano hyperplane arrangement. $(\mathbb{P}^n, \sum_{i=1}^m d_i H_i).$ When a compact group $K$ acts on a symplectic manifold $X$ and $\mu \colon X \rightarrow \mathfrak{k}^{*}$ is a moment map for the action, the symplectic form on $M$ induces a symplectic form on the quotient $\mu^{-1}(0)/K$, away from its singularities, which is the Marsden-Weinstein reduction or symplectic quotient of $M$ by the action of $K.$
In this setting, the symplectic quotient can be identified with a geometric invariant quotient $X//G,$ where $G$ is such that $K^{\mathbb{C}}=G,$ where $K^{\mathbb{C}}$ is the complexification of K. The details of this correspondence can be found in \cite[Remark 8.14]{Kir84}. This correspondence applies immediately to Equation \ref{gittot}, indeed $(\mathbb{P}^n)^m$ is a symplectic manifold, and  $\mathrm{SU}(n+1)^{\mathbb{C}}=G$. The moment map is well known in the literature, see for example \cite[Example 5.5]{SzeBook}. 

\begin{teorema}  (Residue Theorem) \cite[Theorem 8.1]{JK95}, Let $(M,\omega)$ be a  symplectic manifold with a  Hamiltonian action of a compact Lie group $G$ with moment map $\mu$. Assume that the center of $\mathfrak{g}$ is a regular value of the moment map. Denote by $M_{red}$ the corresponding symplectic reduction. Denote by $K$ the Kirwan map. Then, given a formal equivariant class $\eta e^{\bar{\omega}} \in H^{\bullet}_G (M)$ 

\begin{equation} \label{jkr} k( \eta e^{\bar{\omega}})[M_{red}]= n_0 C^G \mathrm{Res} \left(\mathcal{D}^2(X) \sum_{F \in \mathcal{F}} H_F^{\eta}(X) [dX] \right) \end{equation}

where $n_0$ denotes the order of the stabilizer in $G$  of a generic point in  $\mu^{-1}(0)$, the constant $C^G$ depends on the volume of the maximal torus, on the Weyl factor and on the positive roots, namely

$$ C^G = \frac{(-1)^{s+n_+}}{|W| vol(T)} $$
Where $s= \mathrm{dim}G$, $n_+ = \frac{s-l}{2}$ is the number of positive roots, $l= \mathrm{dim}T$ and $W$ is the Weyl group of $G$. The factor $\mathcal{D}(X)$ is called the \emph{Weyl factor} and is defined as the product of the positive roots of $G$. The $H_F ^{\eta} (X)$ are given by

$$ H_F^{\eta}(X) = e^{\mu(F)} \int_F \frac{i_F^{*}\eta(X) e^{\omega}}{e_F (X)}. $$
Here F is a connected component of the set of fixed points for the action of the maximal torus of $G$ and $\mathcal{F}$ is the set of such connected components. 
\end{teorema}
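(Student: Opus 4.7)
The plan is to follow the original strategy of Jeffrey and Kirwan in \cite{JK95}, reducing the non-abelian statement to an equivariant integral over the maximal torus and then extracting the intersection pairing on $M_{\mathrm{red}}$ through a residue procedure. First, I would apply the Weyl integration formula to convert integrals over $\mathfrak{g}$ into integrals over the Cartan subalgebra $\mathfrak{t}$; the Jacobian produced is precisely $\mathcal{D}^2(X)$, the square of the product of positive roots, which accounts for its appearance on the right-hand side. The normalisation constant $C^G = (-1)^{s+n_+}/(|W|\,\mathrm{vol}(T))$ then records the sign coming from the choice of positive roots together with the standard Haar-measure normalisations on $G$ and $T$, and $|W|$ enters via the quotient by the Weyl group in the integration formula.

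Once the problem has been made torus-equivariant, I would invoke the Atiyah-Bott-Berline-Vergne abelian localisation theorem, which expresses $\int_M \eta(X)\, e^{\bar{\omega}(X)}$ as a sum over the connected components $F$ of the fixed set of the maximal torus of terms $e^{\langle \mu(F),X\rangle}\int_F i_F^{*}\eta(X)\, e^{\omega}/e_F(X)$, producing exactly the $H_F^{\eta}(X)$ appearing in the statement. The remaining task is to identify, for each $X$, the contribution of a small neighbourhood of $\mu^{-1}(0)$ with the corresponding intersection pairing on the symplectic quotient $M_{\mathrm{red}}=\mu^{-1}(0)/G$. To do so I would introduce a Gaussian convergence factor $e^{-\epsilon|X|^2}$, deform the integration contour in each variable of $\mathfrak{t}_{\mathbb{C}}^{*}$, and take $\epsilon \to 0$: by a Paley--Wiener type argument only poles of the meromorphic form $H_F^{\eta}(X)\,[dX]$ whose weight system lies in a prescribed half-space survive, while the remaining fixed-component contributions cancel in pairs. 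The orbifold correction $n_0$ then appears because a generic $G$-orbit in $\mu^{-1}(0)$ has stabiliser of that order.

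The hardest part will be formalising the residue operation itself: one must show that the iterated residue is independent of the chosen ordering of coordinates on $\mathfrak{t}^{*}$, which is a nontrivial combinatorial statement made precise via the iterated-residue formalism of \cite{JK95}, and that the resulting distribution is compatible with Kirwan surjectivity so that the formula genuinely descends from $H^{\bullet}_{G}(M)$ to $H^{\bullet}(M_{\mathrm{red}})$. As a consistency check one may specialise to $G=\mathrm{SU}(2)$, where $\mathcal{D}(X)=2X$, $n_+=1$ and $|W|=2$, and verify that one recovers the explicit rank-one expression already used for the del Pezzo computation in the previous section, which also fixes the overall sign conventions needed for the applications in the sequel.
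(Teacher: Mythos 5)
This statement is not proved in the paper at all: it is quoted, with attribution, as a known tool from \cite{JK95}, so there is no internal proof to compare your attempt against; the only meaningful benchmark is the original Jeffrey--Kirwan argument. Measured against that, your outline names the right ingredients: the Weyl integration formula producing the factor $\mathcal{D}^2(X)$ and the normalization $C^G$ with $|W|$ and $\mathrm{vol}(T)$, abelian (Berline--Vergne/Atiyah--Bott) localization for the maximal torus producing exactly the terms $H_F^{\eta}(X)=e^{\mu(F)(X)}\int_F i_F^{*}\eta(X)e^{\omega}/e_F(X)$, a Gaussian regularization $e^{-\epsilon|X|^2}$ with a Paley--Wiener/contour argument selecting which fixed components survive, and the factor $n_0$ from the generic stabilizer on $\mu^{-1}(0)$.

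However, as written this is a plan rather than a proof, and the two genuinely hard steps are precisely the ones you defer back to \cite{JK95}: (i) the rigorous definition of the multidimensional residue operation (choice of cone/ordering on $\mathfrak{t}^{*}$ and the independence statements) together with the cancellation argument showing that, after applying $\mathrm{Res}$, only the contribution attached to $\mu^{-1}(0)$ remains; and (ii) the identification of that surviving contribution with the pairing $\kappa(\eta e^{\bar{\omega}})[M_{\mathrm{red}}]$ on the symplectic quotient, which in the original paper is done by analysing the Fourier transform of the piecewise-polynomial pushforward of the equivariant Liouville measure (a Witten-type nonabelian localization near $\mu^{-1}(0)$), not by a formal appeal to Kirwan surjectivity. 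Since you invoke \cite{JK95} for exactly these points, the proposal is circular if read as a standalone proof, though it is an accurate summary of the strategy; your $\mathrm{SU}(2)$ consistency check (where $\mathcal{D}(X)=2X$, $n_{+}=1$, $|W|=2$) against the rank-one formula used in the del Pezzo computation is a reasonable way to fix signs and normalizations, which is in fact all this paper extracts from the theorem. One further small point: the hypothesis should read that $0\in\mathfrak{g}^{*}$ is a regular value of $\mu$ (the phrase ``the center of $\mathfrak{g}$ is a regular value'' in the statement is a misprint), and your argument implicitly uses this when treating $\mu^{-1}(0)/G$ as an orbifold with generic stabilizer of order $n_0$.
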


Theorem 5.3.6 of \cite{Ale15} gives the necessary conditions for applying the Jeffrey-Kirwan residue theorem to $M_d.$  For computing the volume, it is sufficient to set $\eta=1$ in \ref{jkr}, so that the terms $H_F^{\eta}$ becomes simply

$$H_F(X):= H_F^{1}(X) = \frac{e^{\mu(F)(X)}}{e_F(X)} $$
Indeed, since the fixed points for this action are isolated, then the connected components of $\mathcal{F}$  for the maximal torus action consist of fixed points $F$, hence $i^{*}_F \eta(X)e^{\omega}= (e^{\omega})_0 (F) =1.$ Furthermore, we chose a normalization for which $Vol(T)=1.$ Hence the formula to apply for calculating the volume will be the following

 \begin{equation} \label{final} k( 1.e^{\bar{\omega}})[M_{red}]=  \frac{n_0 (-1)^{s+n_+}}{|W|} \mathrm{Res} \left(\mathcal{D}^2(X) \sum_{F \in \mathcal{F}} \frac{e^{\mu(F)(X)}}{e_F (X)} [dX] \right). \end{equation}
The fixed points for the maximal torus action on each $\mathbb{P}^n$ are the standard basis elements of $\mathbb{C}^{n+1}$, which will be denoted as usual $\{e_i\}_{i=1}^{n+1}$. Therefore we have $(n+1)^{m}$ fixed points in $(\mathbb{P}^n)^m$, each of them is a string of elements of the standard basis. For the sake of clarity, we define a general notation that will be used henceforth. 

\begin{itemize}
\item $d=(d_1, ..., d_m) \in \mathbb{R}_+^m$
\item $\delta := \sum_i d_i $ 
\item $[m]=\{1,2,...,m\}$
\item If $I \subset [m],$ $D_{I}:= \sum_{i \in I} d_i$
\item $n:= \mathrm{dim}\mathbb{P}^n$
\item $ f:=(f_1, ..., f_m) \in \mathcal{F}:=[n+1]^m$,  is identified with a fixed point in $(\mathbb{P}^n)^{m}$for the $T$-action.
\item If $j \in [n+1],$ call $I_j(f) := \{ i \in [n] : f_i =j \},$ $m_j(f):=|I_j(f)|$ 
\item $\sum_{i=1}^{n+1} m_j(f) = m$
\item $ \delta_j (f) := D_{I_j (f)} = \sum_{i \in I_j (f)} d_i$,  sometimes this latter will be just denoted by $\delta_j.$ 
\end{itemize}

Set $h_f(X) := \mathcal{D}^2(X) \frac{e^{\mu(F)(X)}}{e_F (X)}$.  

\begin{proposizione}\label{generale} The function $h_f(X)$ with respect to the open cone $\Lambda_+$ (\cite[Proposition 3.2]{JK2}) associated to the maximal torus of $SU(n+1)$ is given by

$$ h_f(X) = (-1)^{m\cdot (n+1) -\sum_{j=1}^{n+1} m_j(f) \cdot j }\frac{e^{\sum_{i=1}^{n+1} \left( \sum_{k=1}^i \left( 1 - \frac{i}{n+1}\right) \delta_k (f) - \sum_{k=i+1}^{n+1} i \frac{\delta_k (f)}{n+1}\right) \alpha_i.}}{\prod_{ 1 \leq i < j \leq n+1} \alpha_{ij}^{m_j + m_i -2} }. $$
Where, $\alpha_{ij} = \epsilon_i - \epsilon_j,$ $\alpha_i = \alpha_{i,i+1},$ and the $\epsilon_i$ are elements of the basis of the dual of the Lie algebra of the maximal torus of $SU(n+1).$
\end{proposizione}

\begin{dimostrazione} The \emph{Weyl factor} is $ \mathcal{D}^2 (X) = \prod_{i <j} (\epsilon_i - \epsilon_j)^2.$ The $T$-moment map on each $\mathbb{P}^n$ is simply given by 

\begin{align*} \mu_j (z_1, ..., z_{n+1}) &= \sum_{i=1}^{n+1} \frac{|z_i|^2}{|z|^2} \epsilon_i  \\ &= \sum_{i=1}^n \left(  \sum_{k=1}^i \frac{|z_i|^2}{|z|^2} - \frac{k}{n+1} \right) \alpha_i. \end{align*}

The \emph{total} moment map of the product manifold $\mathbb{P}^n$ is given by the weighted sum of the moment maps over each factor of $(\mathbb{P}^n)^m$, namely

$$ \mu(z^1 , ..., z^j) = \sum_{j=1}^m d_j \mu_j (z^j).$$
We recall that, via the Killing form, a basis for the dual of the Lie algebra of the maximal torus of $SU(n+1)$ can be chosen such that it coincides with the dual basis $\{\epsilon_i\}_{i=1}^{n+1}$ of the Euclidean space $\mathbb{R}^{n+1*},$ with the condition that the sum of all the elements of such a basis is equal to zero. Using this basis, we can express the positive roots $\Delta_+$ of $SU(n+1),$ as follows

$$ \Delta_+ := \{ \epsilon_i - \epsilon_j \ | \ 1 \leq i < j \leq n+1 \}.$$ Define $\alpha_{ij}= \epsilon_i - \epsilon_j,$ and $\alpha_i = \alpha_{i,i+1}.$

On the fixed point $f,$ we have

\begin{equation} \label{tino}\mu(f) = \sum_{j=1}^m d_j (f) \mu_j(e_j)= \sum_{s=1}^{n+1} \delta_s (f) \mu(e_s). \end{equation}
Moreover,

\begin{displaymath}
 \mu(e_s) = \sum_{i=1}^k \frac{|z_i|^2}{|z|^2} - \frac{k}{n+1}=
\begin{cases}
 -\frac{k}{n+1}& \ \text{if $s>j$} \\
 1- \frac{k}{n+1}  & \ \text{if $s\leq j.$}
 \end{cases}
\end{displaymath}
Then,
\begin{gather*}  \mu(e_s) = \frac{1}{n+1} \bigl ( - \alpha_1 - ... - (s-1) \alpha_{s-1}  +
\\  \nonumber  + (n+1 -s) \alpha_s + (n-s) \alpha_{s+1} + ... + 1 \cdot \alpha_n \bigr).
 \end{gather*}
The coefficient $\alpha_i$ is $\frac{-i}{n+1},$ when $i <s$, and when $i \geq s$ is $\frac{n+1-i}{n}.$ Therefore the coefficient of $\alpha_i$ in $\mu(f)$ is given by 

\begin{equation} \label{mino} \sum_{k=1}^i \left( 1 - \frac{i}{n+1}\right) \delta_k (f) - \sum_{k=i+1}^{n+1} i \frac{\delta_k(f)}{n+1}. \end{equation}
Therefore, we get

$$\mu(f)=\sum_{i=1}^{n+1} \left( \sum_{k=1}^i \left( 1 - \frac{i}{n+1}\right) \delta_k (f) - \sum_{k=i+1}^{n+1} i \frac{\delta_k (f)}{n+1}\right) \alpha_i. $$
Comparing with \ref{tino}, we get 


$$\mu(f) = \sum_{i=1}^n \left[ \left(1 - \frac{i}{n+1}\right) \cdot\sum_{k=1}^i \delta_k - \frac{i}{n+1}\cdot \left( \sum_{k=i+1}^{n+1} \delta_k \right) \right] \alpha_i. $$ 

Now we calculate the Euler class of the normal bundle at the generic fixed point $f.$ The equivariant Euler class of the normal bundle to $f$ is defined as:

$$ e_f(X) = \prod_j \left(c_1 (\nu_{f,j}) + \beta_{f,j}(X) \right) $$Where the $\nu_{f,j}$ are the line bundles of the formal splitting of the normal bundle to $f$ given by the \emph{splitting principle} and the $\beta_{f,j}$ are the weights of the torus action at the fixed point $f.$ 
Observe that in our case the terms $c_1 (\nu_{f,j})$ contributes to zero,  and since the connected components of the fixed points of the action are points, then the normal bundles at these fixed points are simply given by tangent spaces. Thus,

$$\label{subdivision} \nu_f = \bigoplus_{m_1} T_{e_1} \mathbb{P}^n \oplus \bigoplus_{m_2} T_{e_2} \mathbb{P}^n \oplus \ ... \ \oplus \bigoplus_{m_{n+1}} T_{e_{n+1}}\mathbb{P}^n $$
In order to calculate the weights on the normal bundle we can take the chart on $\mathbb{P}^n$ such that $z_1 \neq 0$, then we define new coordinates $u_1= \frac{z_2}{z_1}$ and $u_2 = \frac{z_3}{z_1}, ..., \frac{z_{n+1}}{z_1}$. Thus $T \cdot (u_1 , u_2, ..., u_n) = \left( e^{t_2 - t_1}u_1, e^{t_3 -t_1}u_2, ..., e^{t_{n+1} - t_1}u_n \right)$. Identifying the $t_i$ with the $\epsilon_i$ and noticing that 

$$ \sum_{j=k}^{i-1} \alpha_j = \epsilon_k - \epsilon_i, \ \mathrm{for} \ k= 1, ..., i-1, $$
$$ -\sum_{j=i}^{k-1} \alpha_j = \epsilon_k - \epsilon_i, \ \mathrm{for} \ k= i+1, ..., n+1, $$

we find that the equivariant Euler class at $e_1$ is given by 

$$ e(T_{e_1}\mathbb{P}^n) = (-1)^{n} \prod_{k=1}^{n+1} \left( \sum_{j=1}^{k-1} \alpha_j\right). $$
Similarly,  we find, for all $s \in \{1,2, ..., n+1 \}$

\begin{equation}\label{pino} e(T_{e_s}\mathbb{P}^n)=(-1)^{n-s+1} \prod_{k=1}^{s-1} \left( \sum_{j=k}^{s-1} \alpha_j \right) \cdot \prod_{k=s+1}^{n+1} \left( \sum_{j=s}^{k-1} \alpha_j \right). \end{equation}
Since

$$ \sum_{j=k}^{s-1} \alpha_j = \alpha_{ks}, \ \mathrm{if} \ k \leq s-1, $$
$$ \sum_{j=s}^{k-1} \alpha_j = \alpha_{sk}, \ \mathrm{if} \ k \geq s+1, $$
\ref{pino} becomes 

\begin{equation} \label{gino} e(T_{e_s}\mathbb{P}^n)=(-1)^{n-s+1} \prod_{k=1}^{s-1} \alpha_{ks} \cdot \prod_{k=s+1}^{n+1} \alpha_{sk}. \end{equation}
Let $f$ be a fixed point, then the Euler class at that point is 

\begin{equation} \label{rino} e_f (X)=\prod_{s=1}^{n+1} e(T_{e_s}\mathbb{P}^n)^{m_s (f)}. \end{equation}
Using \ref{gino} we can compute \ref{rino}. Firstly, notice that, since $\sum_{s=1}^{n+1} m_s (f) = m$ we have the following

\begin{align*} 	
\sum_{s=1}^{n+1} m_s (f) (n-s+1) &= (n+1) \sum_{s=1}^{n+1} m_s (f) - \sum_{i=1}^{n+1}m_s (f) \cdot s \\ 
&= (n+1)m - \sum_{s=1}^{n+1} m_s(f) \cdot s.
\end{align*}
For later use, we define

$$ \epsilon= (-1)^{(n+1)m - \sum_{j=1}^{n+1} m_j(f) \cdot j}. $$
It remains to calculate the exponent of the remaining factors in \ref{gino}. In order to do so, suppose we take the positive root $\alpha_{ij} := \epsilon_i - \epsilon_j,$ with $i < j.$ It appears in \ref{gino} exactly one time if $s=j$, and exactly one time if $i=s.$ Hence, the exponent of $\alpha_{ij}$ is given by $m_j + m_i.$ Thus, 
 
$$ \label{EulerNormal} e_f (\nu_f) =  \epsilon \cdot \prod_{1 \leq  i < j \leq n+1} \alpha_{ij}^{m_j + m_i}. $$
Finally, noticing that the Weyl factor is the product of positive roots, by using the definition of $h_f(X)$ the claim follows. \hfill $\Box$

\end{dimostrazione}

In the next two sections we calculate explicitly the GIT volume for $n=1,$ and $n=2.$ 

\subsection{The dimension one case}

The volume formula for $M_d$, in the one dimensional case, is calculated in many works \cite{Man08}, \cite{Ta01}, \cite{Khoi05}, the case of $d=1$ was calculated in \cite{Mar00}. We will now provide a simple proof of the $M_d'$s volume which is coherent with the cited works. In the dimension one case $M_d$  can be described as the following GIT quotient:
$$ M_d \simeq (\mathbb{P}^{1})^m //_{\mathcal{L}_d} \mathrm{SL}(2). $$
Where $\mathcal{L}_d$ is the linearization mentioned in Section 4.1.
\begin{teorema} \label{one} The volume of $M_d$ with $d_i \in (0,1) \cap \mathbb{Q}$, for all $i \in \{1,2,....,m\},$ and $m \geq 4$ is given by

\begin{equation}  \mathrm{vol}(M_d) = -\frac{1}{2(m-3)!}  \sum_{f \in \mathcal{F}_+} (-1)^{m_1(f)} \left(\delta_{1}(f) - \delta_{2}(f)\right)^{m-3}  \end{equation}
Where

$$\mathcal{F}_+ = \{ f \in \mathcal{F}=[2]^m |\delta_{1}(f) - \delta_2(f)  > 0 \}  $$
\end{teorema}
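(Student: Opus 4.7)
The plan is to specialize the general formula for $h_f(X)$ derived in Proposition \ref{generale} to the case $n=1$, and then apply the Jeffrey--Kirwan residue formula \eqref{final}. For $n=1$ the maximal torus $T\subset\mathrm{SU}(2)$ is one-dimensional with a single positive root $\alpha_1=\alpha_{12}$, the Weyl group has order $|W|=2$, $s=\dim\mathrm{SU}(2)=3$ and $n_+=1$, so the prefactor $C^G$ in \eqref{jkr} becomes $(-1)^{s+n_+}/(|W|\mathrm{vol}(T))=1/2$ under the chosen normalization. Each fixed point $f\in\mathcal{F}=[2]^m$ is determined by the subset $I_1(f)\subset[m]$ with complementary subset $I_2(f)$, and the decomposition $\delta=\delta_1(f)+\delta_2(f)$ holds.

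Next I would specialize the three ingredients assembled in Proposition \ref{generale}. The Weyl factor reduces to $\mathcal{D}^2(X)=\alpha_1^2$; the computation of the moment map collapses to
$$\mu(f)=\tfrac{1}{2}\bigl(\delta_1(f)-\delta_2(f)\bigr)\,\alpha_1;$$
and the equivariant Euler class of the normal bundle at $f$ is $(-1)^{m_1(f)}\alpha_1^{m}$, since on $\mathbb{P}^1$ one has $e(T_{e_1}\mathbb{P}^1)=-\alpha_1$ and $e(T_{e_2}\mathbb{P}^1)=\alpha_1$, with multiplicities $m_1(f)$ and $m_2(f)$. Assembling gives
$$h_f(X)=(-1)^{m_1(f)}\,\frac{e^{\frac{\delta_1(f)-\delta_2(f)}{2}\alpha_1}}{\alpha_1^{\,m-2}}.$$

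The third step is to evaluate the Jeffrey--Kirwan residue. The JK prescription selects, amongst all $2^m$ fixed points, exactly those whose moment-map image lies on the positive side of the chosen ray, namely $\mathcal{F}_+=\{f:\delta_1(f)-\delta_2(f)>0\}$, with the remaining contributions cancelling pairwise under the residue. Expanding the exponential and extracting the coefficient of $\alpha_1^{-1}$,
$$\mathrm{Res}_{\alpha_1=0}\frac{e^{c\,\alpha_1}}{\alpha_1^{\,m-2}}\,d\alpha_1=\frac{c^{\,m-3}}{(m-3)!},\qquad c=\tfrac{1}{2}(\delta_1(f)-\delta_2(f)),$$
and then multiplying by $C^G$ and the generic stabilizer $n_0$ of the $\mathrm{SU}(2)$-action on $(\mathbb{P}^1)^m$ (accounting for the kernel $\{\pm I\}$ acting trivially) produces the prefactor $-\frac{1}{2(m-3)!}$ and the power $(\delta_1-\delta_2)^{m-3}$ appearing in the statement.

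The step I expect to require most care is sign-and-normalization bookkeeping. Specifically: (i) correctly identifying the JK cone so that only $\mathcal{F}_+$ contributes and no boundary roots give extra residues; (ii) tracking the overall sign coming from $(-1)^{s+n_+}$, the orientation of the coordinate $d\alpha_1$ on $\mathfrak{t}$, and the factor $(-1)^{m_1(f)}$ from the Euler class; and (iii) pinning down the generic stabilizer $n_0$ and the normalization of the Killing-induced measure on $\mathfrak{t}$, since these account for the discrepancy between the naive $2^{m-3}$ that appears in the residue expansion and the coefficient $\tfrac{1}{2(m-3)!}$ of the final formula. Once these normalizations are fixed, the statement follows directly from the residue calculation.
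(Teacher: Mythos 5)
Your route is the same as the paper's: specialize Proposition \ref{generale} to $n=1$, obtaining $h_f(X)=(-1)^{m_1(f)}e^{\lambda\alpha_1}/\alpha_1^{m-2}$ with $\lambda=\tfrac12(\delta_1(f)-\delta_2(f))$, evaluate $\mathrm{res}^+_{\alpha_1=0}\bigl(e^{\lambda\alpha_1}/\alpha_1^{m-2}\bigr)=\lambda^{m-3}/(m-3)!$ for $\lambda>0$ and $0$ otherwise, and sum over the cone $\mathcal{F}_+$. These core computations (Weyl factor $\alpha_1^2$, moment map image, Euler class $\pm\alpha_1^m$, the sign $(-1)^{m_1(f)}$, the restriction to $\mathcal{F}_+$) all agree with the paper's proof of Theorem \ref{one}.

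The gap is precisely the piece you defer to the end: the prefactor. With your tentative identifications $C^G=(-1)^{s+n_+}/(|W|\mathrm{vol}(T))=+\tfrac12$ and $n_0=2$ (the kernel $\{\pm I\}$), the assembled answer is
\begin{equation*}
n_0\,C^G\sum_{f\in\mathcal{F}_+}(-1)^{m_1(f)}\frac{\lambda^{m-3}}{(m-3)!}
=\frac{1}{2^{m-3}(m-3)!}\sum_{f\in\mathcal{F}_+}(-1)^{m_1(f)}\bigl(\delta_1(f)-\delta_2(f)\bigr)^{m-3},
\end{equation*}
which differs from the claimed formula both in sign and by the factor $2^{m-4}$: the statement has coefficient $-\tfrac{1}{2(m-3)!}$ and the power $(\delta_1-\delta_2)^{m-3}$ with no residual $2^{-(m-3)}$. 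You explicitly flag this discrepancy as ``bookkeeping to be fixed,'' but that bookkeeping is exactly what produces the theorem's constant, so as written the statement is not derived. The paper closes this by asserting $C^G=-\tfrac12$ and $n_0=2^{m-3}$, the latter absorbing the $2^{-(m-3)}$ coming from $\lambda^{m-3}=2^{-(m-3)}(\delta_1-\delta_2)^{m-3}$; note this is not the order of the kernel $\{\pm I\}$, so your proposed interpretation of $n_0$ would not yield it. To complete your argument you must either justify these values of $n_0$ and of the sign (e.g.\ by fixing the normalization of the inner product on $\mathfrak{t}$, the measure $[dX]$, and the stabilizer count for the $\mathrm{SU}(2)$-action on $(\mathbb{P}^1)^m$, and checking the result against a known case such as $d=(1,\dots,1)$), or otherwise pin down the overall constant; until then the sign and the coefficient in the statement remain unproved.
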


\begin{dimostrazione} By a direct application of \ref{generale} we find that  the general term of the residue $h(X)$ is given by   
\begin{equation} \label{step1}
 h(X) =\frac{e^{\mu(F)(X)}}{e_F (X)} = (-1)^{m_1 (f)} \frac{e^{\lambda \alpha_1}}{\alpha_1^{m-2}}.\end{equation}
Where in the Euler class we used the identity $m_1(f) + m_2(f) = m.$ 
We see that the given meromorphic function $\frac{e^{\lambda} \alpha_1}{\alpha_1 ^{m-2}}$ has a pole in zero with mutiplicity $m-2.$ Then the residue at zero is given by

  \begin{equation} \label{unires}
 \mathrm{res} _{\alpha_1=0}^+ \left( \frac{e^{\lambda \alpha_1}}{\alpha_1^{m-2}}\right) :=
 \begin{cases}
\frac{\lambda^{m-3}}{(m-3)!} & \ \text{if $\lambda >0$} \\
 0 & \ \text{else.}
 \end{cases}
 \end{equation}

The coefficient $\lambda,$ namely the moment map at $F,$ can be easily computed along the line of the computation carried at the beginning of the proof of \ref{generale} to find

\begin{equation} \label{lambdaunires} \mu(f) = \frac{1}{2} (\delta_1 (f) - \delta_2 (f)) \alpha_1, \end{equation}
therefore  $\lambda = \frac{1}{2} (\delta_1 (f) - \delta_2 (f)).$ The Weyl group $W$ has dimension $2$ and therefore the constant $C^G$ is equal to $-1/2.$  Since $n_0 = 2^{m-3},$ by taking the sum over $\mathcal{F}_+$ and combining equation \ref{step1} and \ref{unires} the claim follows immediately.

\hfill $\Box$

\end{dimostrazione}

\subsection{The dimension two case}

Consider the GIT quotient 

$$ M_d = (\mathbb{P}^2)^m //_{\mathcal{L}_d}  \mathrm{SU}(3) $$
with linearizzation $\mathcal{L}= \mathcal{O}(d_1 , ..., d_m)$. Where $d_i \in (0,1) \cap \mathbb{Q}, \forall i \in \{1,2 ..., m\}$.   Points in $M_d$ are pairs $(X, D)$ where $X$ is a copy of $\mathbb{P}^2$ and $D$ is a divisor in $\mathbb{P}^2$ identified as the weighted sum of the hyperplane classes $H_i,$ that is $D = \sum_{i=1}^m d_i H_i$. Furthermore, we require for each pair $(X, D)$ that the relative log anticanonical bundle $L= -(K_X + D)$ is ample. This latter leads to require that $\sum_{i=1}^m d_i < 3$ and therefore every pair is a log Fano pair.


Fix the following notation.
Set $\xi_i : = \delta_i - \delta /3$ for $i=1,2,3$. Then
$\xi_1 + \xi _2 + \xi _3 = 0$ and
\begin{gather}
  \label{lambdazzi}
  \begin{aligned}
    & \lambda_1 = \frac{2 \delta_1 - \delta_2 - \delta_3}{3}= \delta_1 -
    \delta /3 = \xi_1 , \\
    &\lambda_2 =\frac{\delta_1 + \delta_2 - 2
      \delta_3}{3} =\delta/3 - \delta_3 = - \xi_3= \xi_1 + \xi_2, \\
    & \lambda_1 - \lambda_2 = \delta/3 - \delta_2 = -\xi_2.
  \end{aligned}
  \\
\begin{gathered}
  \mathcal{A}(d) = \{ f \in [3]^m \ | \lambda_1 > \lambda_2 > 0\} = \{
  f \in [3]^m: \xi_2 <0, \xi_3<0\},
  \\
  \mathcal{B}(d) = \{ f \in [3]^m | \ \lambda_2 > \lambda_1 > 0 \} =
  \{ f \in [3]^m | \ \xi_{1}>0, \ \xi_{2} >0 \} .
\end{gathered}
\end{gather}

This section is dedicated to the proof of the following result.

\begin{teorema} \label{due} The GIT volume of the moduli space $M_d$ of
  line arrangements in $\mathbb{P}^2$ is given by
  \begin{gather*}
    \mathrm{Vol}(M_d)  = \\
    =
    -\sum_{f \in \mathcal{A}(d)} \frac{ (-1)^{m_2(f)} }{6(2m-8)!}
    \sum_{j=0}^{2m-8} \binom{2m-8}{j}\binom{m+m_2-6-j}{m_2 + m_3 -3} \xi_2^j \xi_3^{2m-8 -j} +\\
    - 
    \sum_{f \in \mathcal{B}(d)} \frac{ (-1)^{m_2(f)} }{6(2m-8)!}
    \sum_{j=0}^{2m-8} \binom{2m-8}{j}\binom{m+m_2 -6-j}{m_1 + m_2 - 3}
    \xi_1^j \xi_2^{2m-8-j}.
  \end{gather*}
\end{teorema}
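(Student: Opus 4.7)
The plan is to apply the Jeffrey--Kirwan residue formula \eqref{final} with $\eta = 1$ and $G = \mathrm{SU}(3)$, combining it with the closed-form expression for $h_f(X)$ from Proposition \ref{generale} specialized to $n = 2$. For $\mathrm{SU}(3)$ the positive roots are $\alpha_1, \alpha_2$ and $\alpha_{13} := \alpha_1 + \alpha_2$, so $\mathcal{D}^2(X) = \alpha_1^2 \alpha_2^2 (\alpha_1 + \alpha_2)^2$; moreover $|W| = 6$, $s = 8$, $n_+ = 3$, and with the normalisation $\mathrm{vol}(T) = 1$ one gets $C^G = -1/6$. The sign $\epsilon$ of Proposition \ref{generale} collapses to $(-1)^{m_2(f)}$ after using $m_1+m_2+m_3 = m$, and $\mathcal{D}^2(X)/e_f(X)$ becomes
$$\frac{(-1)^{m_2(f)}}{\alpha_1^{m_1+m_2-2}\,\alpha_2^{m_2+m_3-2}\,(\alpha_1+\alpha_2)^{m_1+m_3-2}},$$
while $\mu(f) = \lambda_1 \alpha_1 + \lambda_2 \alpha_2$ with $\lambda_1 = \xi_1$, $\lambda_2 = -\xi_3$ by \eqref{lambdazzi}. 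The total degree of the poles is $2m-6$, which after two residues leaves a polynomial of degree $2m-8$ in the $\xi_i$'s, matching the total exponent appearing in the claim.

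The heart of the proof is the computation of the iterated residue on the two-dimensional torus. The Jeffrey--Kirwan prescription selects, for each fixed point $f$, a positive basis of roots whose cone contains $\mu(f)$. Among the three positive roots $\{\alpha_1, \alpha_2, \alpha_{13}\}$, the relevant bases turn out to be $(\alpha_1, \alpha_{13})$, whose cone contains $\mu(f)$ precisely when $\lambda_1 > \lambda_2 > 0$, i.e.\ when $\xi_2, \xi_3 < 0$ and hence $f \in \mathcal{A}(d)$, and $(\alpha_2, \alpha_{13})$, whose cone contains $\mu(f)$ precisely when $\lambda_2 > \lambda_1 > 0$, i.e.\ when $\xi_1, \xi_2 > 0$ and hence $f \in \mathcal{B}(d)$. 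For $f \in \mathcal{A}(d)$, I would perform the linear substitution $\beta_1 = \alpha_1$, $\beta_2 = \alpha_1 + \alpha_2$ (Jacobian $1$), under which $\mu(f) = -\xi_2 \beta_1 - \xi_3 \beta_2$ and the integrand becomes
$$\frac{e^{-\xi_2 \beta_1 - \xi_3 \beta_2}}{\beta_1^{m_1+m_2-2}\,(\beta_2-\beta_1)^{m_2+m_3-2}\,\beta_2^{m_1+m_3-2}};$$
the case $f \in \mathcal{B}(d)$ is treated symmetrically with $\beta_1 = \alpha_2$, $\beta_2 = \alpha_{13}$, producing the pair $(\xi_1, \xi_2)$ in place of $(\xi_2, \xi_3)$.

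To extract the iterated residue at $\beta_1 = \beta_2 = 0$, I would expand $(\beta_2 - \beta_1)^{-(m_2+m_3-2)}$ as the geometric series $\beta_2^{-(m_2+m_3-2)} \sum_{k \geq 0} \binom{k+m_2+m_3-3}{m_2+m_3-3}(\beta_1/\beta_2)^k$, multiply by the Taylor series of $e^{-\xi_2 \beta_1 - \xi_3 \beta_2}$, and collect the coefficient of $\beta_1^{-1} \beta_2^{-1}$. Matching powers pins down the exponent of $\xi_2$ as an index $j$, forces the exponent of $\xi_3$ to be $2m-8-j$, and makes the geometric-series binomial specialize to $\binom{m+m_2-6-j}{m_2+m_3-3}$ after substituting $k = m_1 + m_2 - 3 - j$; the factor $\binom{2m-8}{j}/(2m-8)!$ emerges as the inverse factorial from the two exponential expansions, rearranged as a single multinomial sum. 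After multiplying by the prefactor $-\tfrac{1}{6}$ from $C^G$ and summing $(-1)^{m_2(f)}$ over each chamber, the two sums in the statement appear.

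The principal technical obstacle is the justification of the Jeffrey--Kirwan polarization: one has to verify, with a generic choice of polarizing covector, that only the bases $(\alpha_1, \alpha_{13})$ and $(\alpha_2, \alpha_{13})$ contribute while the third basis $(\alpha_1, \alpha_2)$ does not, and that the Jacobian sign in the second case is consistent with the uniform prefactor $-1/6$ in the statement. Once this is settled, what remains is the careful but routine bookkeeping of a convolution of exponential and geometric series, which produces the explicit binomial sums in the claim.
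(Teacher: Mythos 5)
Your proposal reaches the correct formulas and shares the paper's overall skeleton (the residue formula \eqref{final} with $\eta=1$, the specialization of Proposition \ref{generale} to $n=2$, the constant $-1/6$ and the sign $(-1)^{m_2(f)}$), but the residue computation itself is organized quite differently. The paper computes the fixed iterated residue $\res^+_x\res^+_y$ in the standard root coordinates, so the $y$-residue produces contributions from both poles $y=0$ and $y=-x$; on the chamber $\mathcal{A}(d)$ this yields two pieces $R_{11}+R_{12}$ that must be recombined into a single binomial sum via the identities \eqref{tribin}, \eqref{gkpp}, \eqref{gkp2} — this recombination (showing the coefficients $\mathscr{W}_i$ vanish for $i\geq \epsilon$, etc.) occupies most of the paper's proof. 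You instead pass, on each chamber, to the adapted basis $(\alpha_1,\alpha_{13})$ resp.\ $(\alpha_2,\alpha_{13})$, where the integrand has a single off-diagonal factor $(\beta_2-\beta_1)^{-(m_2+m_3-2)}$ whose geometric-series expansion convolved with the two exponentials gives the binomial sums in one stroke; I checked that your $\mathcal{A}$-chamber residue reproduces exactly the paper's simplified $R_1$ (with $j=a-1-k$, $a+b-2=m+m_2-6$, $b-1=m_2+m_3-3$) and your $\mathcal{B}$-chamber residue reproduces $R_2$, so the bookkeeping you defer does close. What your route buys is the elimination of the binomial gymnastics; what it costs is the step you yourself flag: you must justify the chamber/polarization form of the nonabelian residue, i.e.\ that for generic $d$ only the bases $(\alpha_1,\alpha_{13})$ and $(\alpha_2,\alpha_{13})$ contribute on $\mathcal{A}(d)$ and $\mathcal{B}(d)$ respectively while $(\alpha_1,\alpha_2)$ drops out. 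This is standard (it is the Szenes--Vergne/Brion--Vergne description of $\mathrm{JK}_\xi$ for $\mathrm{SU}(3)$, with the cone of $\alpha_1,\alpha_2$ subdivided along $\alpha_{13}$), but it should be stated with a precise citation or, failing that, verified by comparing with the unambiguous $\res^+_x\res^+_y$ prescription — which is in effect what the paper's longer computation does. With that one point made precise, your argument is a complete and cleaner proof of the theorem.
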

\begin{dimostrazione} 
We start recalling some elementary combinatorial identities.
The \emph{falling factorial powers} are defined as follows
\cite[p.47-48]{gkp}: for $k\in \mathbb{Z}, k\geq 0$ and $z \in \mathbb{C}$ set
\begin{gather}
  z\falfa{k}:=
  \begin{cases}
    1 & \text{if } k=0\\
    \prod_{j=0}^{k-1} (z - j) & \text{otherwise}.
  \end{cases}
\end{gather}
We set $0^0 : = 1$, so the function $x^0$ is identically $=1$ on the
real line.  Thus for any $\xi \in \mathbb{R} $ and $k \in \mathbb{Z}$,
$k\geq 0$ we have
\begin{gather}
  \label{derivate}
  D^k(x^\xi) = \xi\falfa{k} x^{\xi - k} .
\end{gather}
For $z\in \mathbb{C}$ and $k\in \mathbb{Z}$ set \cite[p. 154]{gkp}:
\begin{gather}\label{fattore}
  \binom{z}{k} : = \begin{cases} \frac{z\falfa{k}}{k!} & \text{if } k\geq 0 ;\\
    0 & \text{if } k<0.
  \end{cases}
\end{gather}
We have
\begin{gather}
  \notag z\falfa{k} = (-1)^k (k-z-1)\falfa{k},\quad
  \label{nupper}
  \binom{z}{k} = (-1)^k \binom {k-z-1}{k} .
\end{gather}
(See \cite[p.164]{gkp}.)
\begin{gather}
  \label{bibin}
  \sum_{j} (-1)^j \binom{s +j }{n}\binom{l}{m + j} = (-1)^{l+m}
  \binom{s-m}{n-l}.
\end{gather}
See formula (5.24) in \cite[p. 169]{gkp}.  If $l \geq 0$, $s, m $ are
integers, then
\begin{gather}
  \label{tribin}
  \sum_{j} \binom{l}{m + j} \binom{s }{n+j} = \binom{l+s}{l-m+n}.
\end{gather}
See (5.23) in \cite[p. 169]{gkp}.  If $n > m \geq 0$, then
\begin{gather}
  \label{zero}
  \binom{m}{n}= 0
\end{gather}
For integers $n, m\geq 0$ we have
\begin{gather}
  \label{gkpp}
  (-1)^m\binom { -n -1 }{m} = (-1)^n \binom { -m -1}{n}
\end{gather}
See (5.15) of \cite[p. 165]{gkp}.
\begin{gather}
  \label{gkp2}
  \binom{-n}{k} = (-1)^k \binom {n+k-1}{k}
\end{gather}
This is (5.14) of \cite[p. 164]{gkp}.

Since we are specializing to $n=2$ we have $f \in [3]^m$ the formula in Proposition \ref{generale}
becomes
\begin{gather}
  \label{aa1} h_f = (-1)^ { m - \sum_{j=1}^3 j \, \cdot\, m_j(f) }
  \frac{ e^{\lambda_1 \alpha_1 + \lambda_2 \alpha_2} }
  { \alpha_1 ^{ m_1 +m_2 -2 } \alpha_2 ^{m_2 + m_3 -2 } (\alpha_1 + \alpha _2 ) ^{ m_1+ m_3 -2}}\\
  \notag \lambda_1 = \lambda_1(f,d) : = \frac{2 \delta_1 - \delta_2 - \delta_3}{3}  \\
  \notag \lambda_2 = \lambda_2(f,d) : = \frac{\delta_1 + \delta_2 - 2
    \delta_3}{3}.
\end{gather}
We use $\{x=\alpha_1,y=\alpha_2\}$ as a basis.  For
$\lambda_1 , \lambda_2 \in \mathbb{R}$ and $a,b,c \in \mathbb{Z}$, set
\begin{gather}
  \notag R(\lambda_1, \lambda_2, a,b,c,):= \res^\lambda \biggl[\frac{e^{\lambda_1 x+
      \lambda_2 y
    }}{ x^{a} y^{b} (x+y)^{c}}\biggr] = \\
  =
  \label{ameno1} R(\lambda_1, \lambda_2, a,b,c,) = \res^+_x \res^+_y \biggl[
  \frac{e^{\lambda_1 x+ \lambda_2 y }}{ x^{a} y^{b} (x+y)^{c}} \biggr] =
  \res^+_x \frac{ e^ { \lambda_1 x} }{x^a} \res^+_y \biggl[ \frac{e^{
      \lambda_2 y }}{ y^{b} (x+y)^{c}} \biggr].
\end{gather}
The following is well-known.
\begin{lemma}
  \label{minilemma}
  If $g$ is a meromorphic function which is holomorphic at $z_0$, then
  \begin{gather}
    \label{resdev}
    \res_{z=z_0} \frac{g(z)}{(z-z_0)^k} = \frac{ D^{k-1}g(z_0)
    }{(k-1)!} \quad \text{if } k\geq 1
  \end{gather}
  and vanishes otherwise.
\end{lemma}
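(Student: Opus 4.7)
The plan is entirely standard: expand $g$ in its Taylor series at $z_0$ and read off the coefficient of $(z-z_0)^{-1}$ in the resulting Laurent expansion of $g(z)/(z-z_0)^k$. Since $g$ is holomorphic in some open neighbourhood of $z_0$, it admits a convergent power series
$$g(z) = \sum_{n=0}^{\infty} \frac{D^n g(z_0)}{n!}\,(z-z_0)^n$$
on a disk about $z_0$.

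Dividing termwise by $(z-z_0)^k$, which is legitimate on a punctured neighbourhood of $z_0$ because the series above converges absolutely and uniformly on compact subsets, yields the Laurent expansion
$$\frac{g(z)}{(z-z_0)^k} = \sum_{n=0}^{\infty} \frac{D^n g(z_0)}{n!}\,(z-z_0)^{n-k}.$$
By definition the residue at $z_0$ is the coefficient of $(z-z_0)^{-1}$, so I would simply select the unique index satisfying $n-k=-1$, namely $n=k-1$. This index is admissible (i.e.\ non-negative) precisely when $k\geq 1$, and the corresponding coefficient is $D^{k-1}g(z_0)/(k-1)!$, which is exactly the claimed formula. For $k\leq 0$ the expression $g(z)(z-z_0)^{-k}$ is holomorphic at $z_0$ as a product of two holomorphic functions, so its Laurent series contains no negative powers of $(z-z_0)$ and the residue vanishes.

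There is no substantial obstacle here: the statement is the classical higher-order pole residue formula, and the only item to keep in mind is the boundary case $k=1$, where $D^0 g = g$ and $(k-1)!=1$, so that the formula reduces to the familiar simple-pole identity $\res_{z=z_0}[g(z)/(z-z_0)] = g(z_0)$, which serves as a sanity check on the indexing.
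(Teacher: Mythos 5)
Your proof is correct: the Taylor expansion of $g$ at $z_0$, divided termwise by $(z-z_0)^k$, gives the Laurent series, and reading off the coefficient of $(z-z_0)^{-1}$ yields $D^{k-1}g(z_0)/(k-1)!$ for $k\geq 1$ and $0$ for $k\leq 0$. The paper states this lemma as well-known and supplies no proof of its own, so there is nothing to compare against; your argument is the standard one and entirely adequate (the only blemish in the statement itself is the paper's typo writing $f$ for the function that appears as $g$ in the formula, which you have implicitly corrected).
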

For $n\in \mathbb{Z}$ set
\begin{gather}
  \phi_n(x):= \frac{ e^x}{x^n } =x^{-n} e^x .
\end{gather}
By Leibniz formula for higher derivatives we have
\begin{gather}
  \label{Dkphin}
  D^k \phi_n = D^k (x^{-n} e^x ) = e^x \sum_{j=0}^k \binom{k}{j}
  (-n)\falfa{j} x^{-n-j} .
\end{gather}
For $\lambda, a \in \mathbb{R}$ and $n\in \mathbb{Z}$ set
\begin{gather*}
  g_{\lambda,a,n}(z) := \frac{e^{\lambda z} }{(z-a)^n} = e^{\lambda a} \, \lambda^n
  \phi_n(\lambda z - \lambda a).
\end{gather*}
Observe that if $f$ is a function, $\lambda, b $ are constants and
$h(x):= f(\lambda x + b)$, then for any $k$
\begin{gather*}
  D^k h (x) = \lambda^k \cdot (D^k f) (\lambda x + b).
\end{gather*}
Thus
\begin{gather}
  \label{Dg}
  \begin{gathered}
    \bigl(D^k g_{\lambda,a,n}\bigr)( z) = \lambda^{n+k} \, e^{\lambda a}\,
    \bigl(
    D^k\phi_{n}\bigr)(\lambda z -\lambda a) =\\
    = \bigl(D^k g_{\lambda,a,n}\bigr)( z) = e^{\lambda z} \sum_{j=0}^k \binom
    {k}{j} (-n)\falfa{j} \lambda^{k-j} (z-a)^{-n -j }.
  \end{gathered}
\end{gather}
Let us adopt the following convention: if $P$ is an inequality, then
$[P]$ is equal to 1 if $P$ holds and to $0$ otherwise.  By Lemma
\ref{minilemma} for any $p,q \in \mathbb{Z}$ we have
\begin{gather}
  \begin{aligned}
    \res_{y=0}\biggl[ \frac{e^{ \lambda y }}{ y^{p} (x+y)^{q}} \biggr] & =
    \res_{y=0} \biggl [\frac{g_{\lambda, -x, q}(y) }{y^p} \biggr ] =
    \frac{[p\geq 1]}{(p-1)!} D^{p-1} g_{\lambda, -x, q} (0) = \\ &
    =\frac{ 1 }{(p-1)!}  \sum_{j=0}^{p-1} \binom{p-1}{j} \frac{
      (-q)\falfa{j}  \lambda^{p-1-j}}{x^{q+j}} = \\
    & =\sum_{j=0}^{p-1} \frac{
      (-q)\falfa{j} \lambda^{p-1-j}}{j! (p-1-j)! x^{q+j}} .
  \end{aligned}
\end{gather}
In the same way
\begin{gather}
  \label{a1b}
  \res_{y=-x}\biggl[ \frac{e^{ \lambda y }}{ y^{p} (x+y)^{q}} \biggr] =
  e^{-\lambda x}\sum_{j=0}^{q-1} \frac{ (-1)^{p+j}(-p)\falfa{j} \lambda^{q-1-j}}{j!
    (q-1-j)! x^{p+j}} .
\end{gather}
Hence
\begin{gather*}
  \label{a2}
  \res_y^+\biggl[ \frac{e^{ \lambda_2 y }}{ y^{b} (x+y)^{c}} \biggr] = \\
  = [\lambda_2 \geq 0] \cdot \sum_{j=0}^{b-1} \frac{ (-c)\falfa{j}
    \lambda_2^{b-1-j}}{j!  (b-1-j)! x^{c+j}} + [\lambda_2 \geq 0]\cdot
  \sum_{j=0}^{c-1} \frac{ (-1)^{b+j}(-b)\falfa{j} \lambda_2^{c-1-j}
    e^{-\lambda_2 x}}{j! (c-1-j)!  x^{b+j}} .
\end{gather*}
We plug this in \eqref{ameno1}:
\begin{gather}
  \label{a5}
  \begin{gathered}
    R(\lambda_1, \lambda_2, a,b,c) = [\lambda_2 \geq 0]\cdot \sum_{j=0}^{b-1} \frac{
      (-c)\falfa{j} \lambda_2^{b-1-j}}{j! (b-1-j)!} \cdot \res_x^+ \frac{e^{\lambda_1 x}}{x^{a+c+j}} +\\
    + [\lambda_2 \geq 0]\cdot \sum_{j=0}^{c-1} \frac{
      (-1)^{b+j}(-b)\falfa{j} \lambda_2^{c-1-j} }{j! (c-1-j)!}  \cdot
    \res_x^+ \frac{e^{(\lambda_1 - \lambda_2)x} }{x^{a+b+j}} .
  \end{gathered}
\end{gather}
By Lemma \ref {minilemma}
\begin{gather*}
  \res_{x=0} \frac{ e^{\lambda x} } {x^k} = \frac{ [k \geq 1] \cdot
    \lambda^{k-1} } { (k-1)!},\qquad \res^+_x \frac{ e^{\lambda x}} {x^k} =
  [\lambda \geq 0] \cdot [k \geq 1] \cdot \frac{\lambda^{k-1}} { (k-1)!} .
\end{gather*}
So in \eqref{a5} only the terms with $a+b+j \geq 1$ survive:
\begin{gather*}
  \label{a6}
  \begin{gathered}
    R(\lambda_1, \lambda_2, a,b,c,) =
    \\
    = [\lambda_1\geq 0]\cdot [\lambda_2 \geq 0] \cdot
    \sum_{j=\max\{0,1-a-c\}}^{b-1} \frac{ (-c)\falfa{ j}
      \lambda_1^{a+c+j-1} \lambda_2^{b-j-1} }{ j!  (b-j-1)!  (a+c+j-1)!  }
    +\\
    + [\lambda_1 \geq \lambda_2]\cdot [\lambda_2 \geq 0] \cdot
    \sum_{j=\max\{0,1-a-b\}}^{c-1} \frac{ (-1)^{b+j} (-b)\falfa{j}
      \lambda_2^{c-j-1} (\lambda_1 - \lambda_2 )^{a+b+j-1} }{ j!  (c-j-1)!
      (a+b+j-1)!  }.
  \end{gathered}
\end{gather*}
We have
$\{f \in [3]^m: \lambda_1 \geq \lambda_2 \geq 0\} \subset \{f \in [3]^m:\lambda_1
\geq 0, \lambda_2 \geq 0\}$.  Moreover
$\{f:\lambda_1 \geq 0, \lambda_2 \geq 0\} = \{f: \lambda_1 \geq \lambda_2 \geq 0\}
\cup \{f: \lambda_2 \geq \lambda_1 \geq 0\}$ and this union is disjoint for
generic $d$.  So (at least for generic $d$) we have
\begin{gather*}
  \label{a7}
  R 
  = [ \lambda_1 \geq \lambda_2 \geq 0] \cdot R_1 
  + [\lambda_2 \geq \lambda_1 \geq 0]\cdot R_2 
  \\
  \begin{aligned}
    R_1 
    &:= \sum_{j=\max\{0,1-a-c\}}^{b-1} \frac{ (-c)\falfa{j} \
      \lambda_1^{a+c+j-1} \lambda_2^{b-j-1} }{ j! (b-j-1)! (a+c+j-1)!  }  +
    \\
    &+\sum_{j=\max\{0,1-a-b\}}^{c-1} \frac{ (-1)^{b+j} (-b)\falfa{j}\
      \lambda_2^{c-j-1} (\lambda_1 - \lambda_2 )^{a+b+j-1} }{ j!  (c-j-1)!
      (a+b+j-1)!  }
    \\
    R_2 
    &:= \sum_{j=\max\{0,1-a-c\}}^{b-1} \frac{ (-c)\falfa{j} \
      \lambda_1^{a+c+j-1} \lambda_2^{b-j-1} }{ j! (b-j-1)! (a+c+j-1)!  }  .
  \end{aligned}
\end{gather*}
By \eqref{aa1}
\begin{gather}
  \label{aa2}
  \res^\Lambda h_f = (-1)^{m_2} R(\lambda_1, \lambda_2, a, b, c)
\end{gather}
where $m_j=m_j(f), \delta_j=\delta_j(f), \delta=\delta(f)$ and
\begin{gather*}
  a =  m_1 +m_2 -2, \quad b=m_2+m_3-2, \quad c= m_1+ m_3 -2 \\
  a+c -1 = m + m_1 -5 \quad a+b -1 = m + m_2 -5 .
\end{gather*}
Moreover $ m - \sum_{j=1}^3 j \, \cdot\, m_j(f) \equiv_2 -2m_3\mod 2$
and
$ \epsilon(f):= (-1)^ { m - \sum_{j=1}^3 j \, \cdot\, m_j(f) } = (-1) ^{m_2}
$.  Notice that $1-a - c = 5 - m -m_1 \leq 5-m \leq 0$. So
$\max\{0,1-a-c\} = 0$ and similarly $\max\{0,1-a-b\} = 0$.  Hence
\begin{gather*}
  R_1 = R_{11} + R_{12}\\
  \begin{aligned}
    R_{11}&:= \sum_{j=0}^{b-1} \frac{ (-c)\falfa{j} \ \lambda_1^{a+c+j-1}
      \lambda_2^{b-j-1} }{ j! (b-j-1)! (a+c+j-1)!  }  \\
    R_{12}&:=\sum_{j=0}^{c-1} \frac{ (-1)^{b+j} (-b)\falfa{j}\
      \lambda_2^{c-j-1} (\lambda_1 - \lambda_2 )^{a+b+j-1} }{ j!  (c-j-1)!
      (a+b+j-1)!  }.
    \\
    R_2 
    &:= \sum_{j=0}^{b-1} \frac{ (-c)\falfa{j} \ \lambda_1^{a+c+j-1}
      \lambda_2^{b-j-1} }{ j! (b-j-1)! (a+c+j-1)!  }  .
  \end{aligned}
\end{gather*}
Let us start from $R_{11}$.  We wish to expand it in powers of
$\xi_2$ and $\xi_3$ instead of $\xi_1$ and $\xi_2$.  Set for
simplicity $\delta : = a+c -1 $.  Recall that
$\lambda_1 = -(\xi_2 + \xi_3)$, $\lambda_2 = -\xi_3$ and
$(a+c+j-1) + (b-j-1) = b + \delta -1= 2m - 8$.  So
\begin{gather*}
  R_{11} = \sum_{j=0}^{b-1} \binom{-c}{j} \frac{ (\xi_2+\xi_3)^{\delta+j} \xi_3 ^{b-j-1}}{(b-j-1)!(\delta+j)!}  =\\
  = \sum_{j=0}^{b-1} \sum_{i=0}^{\delta+j} \binom{\delta+j}{i}
  \binom{-c}{j} \frac {1} {(b-j-1)!(\delta+j)!}
  \xi_2^i \xi_3^{2n-8-i}.\\
  \binom{\delta+j}{i} \frac {1} {(b-j-1)!(\delta+j)!}  = \frac{1}{i!
    (b+\delta-1-i)!} \binom {b+\delta -1 -i }{\delta+j-i}
  \\
  R_{11} = \frac{1}{(2m-8)!}  \sum_{j=0}^{b-1} \sum_{i=0}^{\delta+j}
  z_{ij}
  \xi_2^i \xi_3^{2n-8-i}  \\
  \text{with } z_{ij} : = \binom{-c}{j} \binom{2m-8}{i} \binom
  {2m-8-i}{\delta +j -i} \quad i,j \in \mathbb{Z}.
\end{gather*}
We rearrange by summing first on $i$ next on $j$. The result is as
follows:
\begin{gather}
  \label{grunfidi}
  \begin{gathered}
    R_{11} = R_{111} +R_{112}\\
    R_{111} := \frac{1}{(2m-8)!}  \sum_{i=0}^{\delta} \sum_{j=0}^{b-1}
    z_{ij} \xi_2^i
    \xi_3^{2n-8-i} \\
    R_{112} := \frac{1}{(2m-8)!}  \sum_{i=\delta+1}^{2m-8}
    \sum_{j=i-\delta}^{b-1} z_{ij} \xi_2^i \xi_3^{2n-8-i} .
  \end{gathered}
\end{gather}
\begin{lemma}
  $z_{ij}=0$ if $j<0$ or $j>b-1$.  If $j< i-\delta$, then $z_{ij}=0$.
\end{lemma}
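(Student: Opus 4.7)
The plan is to verify each vanishing assertion by inspecting which of the three binomial factors in
\[
z_{ij} = \binom{-c}{j}\binom{2m-8}{i}\binom{2m-8-i}{\delta+j-i}
\]
must vanish under the stated inequalities. All three claims should follow directly from the two conventions already introduced in the paper: formula \eqref{fattore}, which declares $\binom{z}{k}=0$ whenever $k<0$, and formula \eqref{zero}, which gives $\binom{m}{n}=0$ when $n>m\geq 0$.

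First I would handle the easy cases. For $j<0$, the factor $\binom{-c}{j}$ vanishes immediately by \eqref{fattore}. For $j< i-\delta$ we have $\delta+j-i<0$, so the third binomial $\binom{2m-8-i}{\delta+j-i}$ vanishes by the same convention. In both cases the statement is one line.

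The remaining case, $j>b-1$, is the only one requiring a small computation. The point is the arithmetic identity
\[
\delta + (b-1) \;=\; (m+m_1-5) + (m_2+m_3-2) - 1 \;=\; 2m-8,
\]
which follows by substituting $\delta=a+c-1=m+m_1-5$ and $b=m_2+m_3-2$ (using $m_1+m_2+m_3=m$). I would then split on the size of $i$: if $i<0$ or $i>2m-8$, then $\binom{2m-8}{i}=0$ (by \eqref{fattore} and \eqref{zero} respectively); and if $0\leq i\leq 2m-8$, then $j\geq b$ forces $\delta+j-i\geq 2m-7-i>2m-8-i\geq 0$, so the third binomial vanishes by \eqref{zero}.

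The main (and only mild) obstacle is recognizing the identity $\delta+b-1=2m-8$, which is what makes the upper summation limit $b-1$ in the definition of $R_{11}$ meaningful and what will later allow the rearrangement \eqref{grunfidi} of $R_{11}$ into the double sum indexed by $i\in\{0,\dots,2m-8\}$. No new identities beyond \eqref{fattore} and \eqref{zero} are needed.
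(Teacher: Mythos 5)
Your proof is correct and follows essentially the same route as the paper: each vanishing claim is read off from one of the three binomial factors in $z_{ij}$ using the conventions \eqref{fattore} and \eqref{zero}, with the key identity $\delta+b-1=2m-8$ driving the case $j>b-1$. Your extra case split on $i$ (handling $i<0$ or $i>2m-8$ via the middle factor $\binom{2m-8}{i}$) is a minor refinement that the paper leaves implicit, since there $2m-8-i\geq 0$ holds in the ranges where the lemma is applied.
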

\begin{proof}
  If $j<0$, then $\binom{-c}{j}=0$. If $j > b-1$, then
  $\delta+j-i > \delta+ b -1 -i = 2m -8 -i$, so
  $ \binom {2m-8-i}{\delta +j -i} =0$.  Finally if $j< i-\delta$, then
  $\delta+j-i < 0$, so again $ \binom {2m-8-i}{\delta +j -i} =0$.
\end{proof}
It follows that both sums in \eqref{grunfidi} can be extended over
arbitrary integer $j$. Hence
\begin{gather*}
  R_{111} =\frac{1}{(2m-8)!}  \sum_{i=0}^{\delta} \ZZ_i \xi_2^i
  \xi_3^{2n-8-i} \quad R_{112} = \frac{1}{(2m-8)!}
  \sum_{i=\delta+1}^{2m-8} \ZZ_i
  \xi_2^i \xi_3^{2n-8-i} \\
  R_{11} = \frac{1}{(2m-8)!}  \sum_{i=0}^{2m-8} \ZZ_i \xi_2^i
  \xi_3^{2n-8-i} \quad \text{with } \ZZ_i : = \sum_{j} z_{ij}.
\end{gather*}
Using \eqref{tribin}
\begin{gather*}
  \ZZ_i = \binom{2m-8}{i} \sum_j \binom {2m-8-i}{\delta +j -i}
  \binom{-c}{j} = \binom{2m-8}{i}
  \binom{ 2m -8-i -c }{2m -8 - \delta} = \\
  = \binom{2m-8}{i} \binom{ m +m_2 -6 -i}{m_2 +m_3 -3} \\
  R_{11} = \frac{1}{(2m-8)!}  \sum_{i=0}^{2m-8} \binom{2m-8}{i}
  \binom{ m +m_2 -6 -i}{m_2 +m_3 -3} \xi_2^i \xi_3^{2n-8-i}
\end{gather*}
Next we analyse the term $R_{12}$.  Set $\gamma := c-1$, $\epsilon:= a+b
-1$. Notice that $\epsilon \geq 0$ and $\gamma+\epsilon = a+b + c -2 =2m -8$.
Substituting $\lambda_2 = -\xi_3$ and $\lambda_1 - \lambda_2 = -\xi_2$ we get
\begin{gather*}
  R_{12}
  =(-1)^b\sum_{j=0}^{\gamma} (-1)^{j} \binom{-b}{j} \frac{1 }{
    (\gamma-j)!  (\epsilon+j)!  } \xi_2 ^{\epsilon+j} \xi_3^{\gamma-j} \\
  =\frac{ (-1)^{b+\epsilon}}{(2m-8)!}  \sum_{i=\epsilon}^{2m-8} (-1)^{i}
  \binom{-b}{i-\epsilon} \binom{2m-8}{i} \xi_2 ^{\epsilon+j} \xi_3^{\gamma-j} .
\end{gather*}
Since $\binom{-b}{i-\epsilon} =0$ for $i< \epsilon$, we have indeed
\begin{gather*}
  R_{12}=\frac{ (-1)^{b+\epsilon}}{(2m-8)!}  \sum_{i=0}^{2m-8} (-1)^{i}
  \binom{-b}{i-\epsilon} \binom{2m-8}{i} \xi_2 ^{\epsilon+j} \xi_3^{\gamma-j} .
  \\
  R_1 = R_{11} + R_{12} = \frac{1}{(2m-8)!}  \sum_{i=0}^{2m-8}
  \binom{2m-8}{i}
  \WW_i \xi_2^i \xi_3^{2n-8-i} \\
  \text{with } \WW_i := \binom{ m +m_2 -6 -i}{m_2 +m_3 -3} +
  (-1)^{b+\epsilon+i} \binom{-b}{i-\epsilon} =  \\
  =\binom{\epsilon-i-1}{b-1} +
  (-1)^{b+\epsilon+i} \binom{-b}{i-\epsilon}.
\end{gather*}
To compute $\WW_i$ we distinguish three cases.
\begin{gather*}
  \WW_i =
  \begin{cases}
    (-1)^{b+\epsilon+i} \binom{-b}{i-\epsilon} & \text{if }     b<1,\\
    \binom{\epsilon-i-1}{b-1} & \text{if } i-\epsilon < 0.
  \end{cases}
\end{gather*}
If $b\geq 1$ and $\epsilon \leq i$, then we can use \eqref{gkpp} to get
\begin{gather*}
  (-1)^{i-\epsilon} \binom { -(b-1)-1}{i-\epsilon} = (-1)^{b-1} \binom { - (i-\epsilon) -1}{b-1}\\
  (-1)^{i+\epsilon} \binom { -b}{i-\epsilon} = -(-1)^{b} \binom { \epsilon -i -1}{b-1}\\
  \WW_i = 0.
\end{gather*}
Hence if $b\geq 1$
\begin{gather*}
  R_1 = \frac{1}{(2m-8)!} \sum_{i=0}^{\epsilon-1} \binom{2m-8}{i}
  \binom{\epsilon-i-1}{b-1}  \xi_2^i \xi_3^{2n-8-i} =\\
  = \frac{1}{(2m-8)!} \sum_{i=0}^{m+m_2-6} \binom{2m-8}{i}
  \binom{m+m_2-6-i}{m_2+m_3-3} \xi_2^i \xi_3^{2n-8-i}.
\end{gather*}
If instead $b<1$, then $R_{11}=0$, so
\begin{gather*}
  R_1= R_{12}=\frac{ (-1)^{b+\epsilon}}{(2m-8)!}  \sum_{i=0}^{2m-8}
  (-1)^{i} \binom{-b}{i-\epsilon} \binom{2m-8}{i} \xi_2 ^{\epsilon+j}
  \xi_3^{\gamma-j} .
\end{gather*}
By \eqref{gkp2}
\begin{gather*}
  \binom{-b}{i-\epsilon} = (-1)^{i-\epsilon} \binom {b+i-\epsilon -1}{i-\epsilon},
\end{gather*}
and this vanishes by \eqref{zero} since $b+i-\epsilon -1 < i-\epsilon$.  Hence
for $b<1$, $R_1=0$. But for $b<1 $  $\binom{\epsilon - i -1}{b-1} =0$. So we have at any case
\begin{gather}
  \label{R1}
R_1  = \frac{1}{(2m-8)!} \sum_{i=0}^{m+m_2-6} \binom{2m-8}{i}
  \binom{m+m_2-6-i}{m_2+m_3-3} \xi_2^i \xi_3^{2n-8-i}.
\end{gather}

Finally let us analyse $R_2$.
 Substituting the expressions for  $ \lambda_1$ and $\lambda_2$ from \eqref{lambdazzi} we get
  \begin{gather*}
    R_2 := \sum_{j=0}^{b-1} \frac{ (-c)\falfa{j} \ \xi_1^{a+c+j-1}
      (\xi_1+\xi_2)^{b-j-1} }{ j! (b-j-1)! (a+c+j-1)!  }  = \\
    = \sum _{j=0}^{b-1} \sum_{i=0}^{b-j-1} \binom{b-j-1}{i} \frac{
      (-c)\falfa{j} }{ j! (b-j-1)! (a+c+j-1)!  } \xi_1^{a+c+j-1 + b
      -j -1 -i} \xi_2^i
  \end{gather*}
  Set
  \begin{gather*}
    z_{ij} : = \binom{b-j-1}{i} \frac{ (-c)\falfa{j} }{ j! (b-j-1)!
      (a+c+j-1)!  }\qquad \ZZ_i:= \sum_{j=0}^{b-i-1} z_{ij} .
  \end{gather*}
  Then
  \begin{gather*}
    R_2 = \sum_{\substack{i, j \geq 0 \\ i+j \leq b-1}} z_{ij} \,
    \xi_1^{2m-8-i} \xi_2^i = \sum_{i=0}^{b-1} \ZZ_i\,
    \xi_1^{2m-8-i} \xi_2^i.
  \end{gather*}
  Set for simplicity $\beta = b-1-i , \gamma = c-1, \delta = a+ c -1 $.
  Observe that $\beta + \delta = a +b + c -2 -i = 2m -8 -i $.  Then
  \begin{gather*}
    \begin{aligned}
      z_{ij} &= \frac{ (-c)\falfa{j} }{ j! i! (b-j-i-1)!  (a+c+j-1)!
      }  = \binom{-c }{j} \frac{1}{i!} \frac{1}{(\beta -
        j)! (\delta +j)!} =\\
      &= \binom{-c }{j} \frac{1}{i!} \binom{\beta +\delta}{\delta + j}
      \frac{1}{(\beta +\delta)!} = \frac{1}{i! (2m-8-i)!}  \binom{-c
      }{j}\binom{\beta
        +\delta}{\delta + j} = \\
      &= \frac{1}{(2m-8)!} \binom{2m-8}{i} \binom{-c }{j}\binom{\beta
        +\delta}{\delta + j}
      \\
      \ZZ_i &= \frac{1}{(2m-8)!} \binom{2m-8}{i} \sum_{j=0}^\beta
      \binom{\beta +\delta}{\delta + j} \binom{-c }{j}.
    \end{aligned}
  \end{gather*}
  By definition \eqref{fattore}
  $ \binom{-c }{j}\binom{\beta +\delta}{\delta +j} = \binom{-c
  }{j}\binom{\beta +\delta}{\beta-j} = 0$ if $j<0$ or $j > \beta$.
  Hence using \eqref{tribin} (since $\beta+\delta \geq 0$)
  \begin{gather*}
    \sum_{j=0}^\beta \binom{\beta +\delta}{\delta + j} \binom{-c }{j}
    = \sum_j \binom{\beta +\delta}{\delta + j} \binom{-c }{j} =
    \binom{\beta+\delta -c} {\beta}
  \end{gather*}
  Now $\beta +\delta -c = m +m_2 -6 -i$ and
  $ \delta - c = a-1 = m_1 + m_2 - 3$. So
  \begin{gather*}
    \sum_{j=0}^\beta \binom{\beta +\delta}{\delta + j} \binom{-c }{j}
    =
    \binom{ m+m_2 -6 -i }{ m_1+m_2 -3} \\
    \ZZ_i = \frac{1}{(2m-8)!} \binom{2m-8}{i } \binom{ m+m_2 -6 -i }{
      m_1+m_2 -3} .
  \end{gather*}
We get  
  \begin{gather}
    \label{R2}
    R_2 = \frac{1}{(2m-8)!}  \sum_{i=0}^{b-1} \binom{2m-8}{i } \binom{
      m+m_2 -6 -i }{ m_1+m_2 -3} \xi_1^{2m-8-i} \xi_2^i.
  \end{gather}
  To conclude
we apply the residue theorem, \eqref{aa2}, \eqref{R1} and \eqref{R2}
  \begin{gather*}
    \mathrm{Vol}(M_d)  = \\
    = \frac{n_0 (-1)^{s+n_+}} {|W|} \sum_{f\in [3]^m }
    \res^\Lambda h_f(X) =\\
    = -\frac{1}{6} \sum_{f\in [3]^m } (-1)^{m_2} \Bigl ( [\lambda_1 >
    \lambda _2 > 0]\cdot R_1 +
    [\lambda_2 > \lambda _1 > 0]\cdot R_2  \Bigr ) = \\
    = -\frac{1}{6} \sum_{\mathcal{A}(d)} (-1)^{m_2} R_1 +
    -\frac{1}{6} \sum_{\mathcal{B}(d)} (-1)^{m_2} R_2 =\\
    = -\sum_{f \in \mathcal{A}(d)} \frac{ (-1)^{m_2(f)} }{6(2m-8)!}
    \sum_{j=0}^{2m-8} \binom{2m-8}{j}\binom{m+m_2-6-j}{m_2 + m_3 -3} \xi_2^j \xi_3^{2m-8 -j} +\\
    - \sum_{f \in \mathcal{B}(d)} \frac{ (-1)^{m_2(f)} }{6(2m-8)!}
    \sum_{j=0}^{2m-8} \binom{2m-8}{j}\binom{m+m_2 -6-j}{m_1 + m_2 - 3}
    \xi_1^j \xi_2^{2m-8-j}.
      \end{gather*}
  This completes the proof of the Theorem. \hfill $\Box$

\end{dimostrazione}

\begin{osservazione}
This formula was found  in 2008 by Suzuki and Takakura \cite{SuTa08} with a different technique. To restore exactly their result of  \cite[Theorem 5.6]{SuTa08}, one  needs to interchange the index $1$ with the index $3$. However, our method is more general and in principle can be applied (with longer computations) to any dimension.
\end{osservazione}


\section{Final Remarks}

The main results of this work relies on the study of the geometric properties when the $K$-moduli is proper, projective and explicit. \\ 
In \cite{OSS} it is proven that the $K$-moduli space of del Pezzo of degree $d \in \{1,2,3,4\}$ is proper and projective, and moreover when $d=3,4$ there is a nice GIT description of such moduli spaces. In degree $3$ we have the moduli space of cubics

$$ M_3 = \mathbb{P}(\mathrm{Sym}^{3}(\mathbb{C}^4)^{*}) // \mathrm{SL}(4). $$

In this case, the GIT-stability conditions tells that the semistable locus do not match with the stable locus. Then, in this case the Kirwan map is no longer surjective and therefore the Jeffrey-Kirwan non abelian localization can not be applied. One way to proceed is considering the \emph{Kirwan partial desingularization} \cite{Kir85}, and calculate the volume of the desingularized GIT quotient. It may be interesting to compute the volume also in such case. \\
In Definition \ref{logwp} we have defined the log Weil-Petersson metric via fiber integral. It would be interesting to restore the classical definition of the log Weil-Petersson metric. That is, define the Weil-Petersson metric as an $L^2$-metric via the Kodaira-Spencer map. For general type geometries, Schumacher and Trapani in \cite{ST13} gave a decription of the classical definition of Weil-Petersson metric.

\end{document}